\documentclass[12pt]{article}
\usepackage{amsmath,amssymb,color,amsthm,amsfonts}
\usepackage{mathtools}
\usepackage{hyperref}
\usepackage[nottoc,numbib]{tocbibind}
\usepackage[style=alphabetic]{biblatex}
\usepackage{tikz-cd}

\newcommand{\on}{\operatorname}

\DeclareMathOperator{\Sym}{Sym}

\DeclareMathOperator{\Hom}{Hom}
\DeclareMathOperator{\Spec}{Spec}

\DeclareMathOperator{\LocSys}{LocSys}

\DeclareMathOperator{\TL}{TL}

\newtheorem{theorem}{Theorem}[section]
\newtheorem{lemma}[theorem]{Lemma}
\newtheorem{proposition}[theorem]{Proposition}
\newtheorem{corollary}[theorem]{Corollary}
\newtheorem{definition}[theorem]{Definition}

\theoremstyle{remark}
\newtheorem{example}[theorem]{Example}
\newtheorem{remark}[theorem]{Remark}

\title{On Moy-Prasad quotients over Laurent series fields}
\author{David Yang}

\begin{document}

\maketitle

\begin{abstract}
Let $k$ be an algebraically closed field and $G$ a connected reductive group over $k((t))$ satisfying some conditions. We define a stratification by conjugacy classes of twisted Levi subgroups of $G$ on each Moy-Prasad quotient $\mathfrak{k}_{x,r}/\mathfrak{k}_{x,r+}$ of $G$. We then calculate the strata in terms of the associated twisted Levi subgroups. This calculation is necessary for several followup papers on the local geometric Langlands program.
\end{abstract}

\tableofcontents

\section{Introduction}

This paper is an input into several followup papers on the local geometric Langlands program, some joint with other authors, including Gurbir Dhillon and Yakov Varshavsky. To motivate our main result, we will sketch some ideas from those papers.

For the entirety of this paper, fix an algebraically closed field $k$ and a connected reductive group $G$ over $k((t)).$ We will need to impose characteristic restrictions on $k$ later, but let us temporarily ignore this point. The local geometric Langlands program aims to classify categorical representations of the $k$-indscheme $G((t))$, whose functor of points is defined by $\Hom_k(\Spec A,G((t)))\cong\Hom_{k((t))}(\Spec A((t)),G((t))).$ (In many previous works, $G$ is assumed to be split, but as we will see later, it is essential to generalize to the non-split case as well.) One of the main results of our followup papers will be that all categorical representations of $G((t))$ can be built up from representations obtained by a (nontrivial) induction process from twisted Levi subgroups (i.e., a subgroup of $G$ which becomes a Levi subgroup after passing to the algebraic closure of $k((t))$).

If $k$ were a finite field instead of an algebraically closed field, Moy and Prasad \cite{MP1} assigned to every irreducible vector space representation of $G((t))$ an unrefined minimal K-type, which consists of a point $x$ in the building of $G$ and a character of a vector space $\mathfrak{k}_{x,r}/\mathfrak{k}_{x,r+}$ defined by Moy-Prasad. For $k$ of large enough characteristic, the dual of this vector space can be identified with another quotient $\mathfrak{k}_{x,-r}/\mathfrak{k}_{x,-r+}$. In a previous paper \cite{yangMP}, we generalized this theory to the geometric setting mentioned above. There is no notion of irreducible categorical representation, but we showed that, in some sense, every categorical representation can be built out of representations for which one can associate a family of unrefined minimal K-types.

As a first step for defining the induction process mentioned above, we will need to relate unrefined minimal K-types for $G$ to unrefined minimal K-types for twisted Levis. This is the goal of the current paper. Let us now explain what we do in greater detail.

In Section \ref{s:prelim} we will recall various preliminaries from Bruhat-Tits theory and Moy-Prasad theory. For the reader unaccustomed to non-split groups, we will also explain how to work with them explicitly in our setting. 

In Section \ref{s:stratification}, we will build up to defining a stratification $(\mathfrak{k}_{x,r}/\mathfrak{k}_{x,r+})^{\geq G_0}$ by conjugacy classes of twisted Levis $G_0$ on each vector space $\mathfrak{k}_{x,r}/\mathfrak{k}_{x,r+}.$ Recall that an unrefined minimal K-type contained the information of a point in some $\mathfrak{k}_{x,-r}/\mathfrak{k}_{x,-r+}$. Our stratification tells us which twisted Levi (up to conjugacy) our minimal K-type should be induced from. The formulation of this principle is the main result of this paper, Theorem \ref{basecase}. It describes the variety $(\mathfrak{k}_{x,r}/\mathfrak{k}_{x,r+})^{=G_0}$ in terms of varieties $(\mathfrak{k}_{x,r}^{G_s}/\mathfrak{k}_{x,r+}^{G_s})^{=G_s}$ for conjugates $G_s$ of $G_0$ such that $x$ lies in the (extended) building of $G_s$.

In Section \ref{s:proof} we give the proof of Theorem \ref{basecase}.

\subsection{Notation and conventions}

We will also fix an algebraic closure $\overline{k((t))}$ of $k((t))$ and an embedding of the field of Puiseux series $\cup k((t^{\frac{1}{n}}))$ into $\overline{k((t))}$. We will need the assumption that $G$ splits over a tamely ramified extension of $k((t))$ and that the characteristic of $k$ does not divide the order of the Weyl group of $G\times_{\Spec k((t))}\Spec\overline{k((t))}$. (Note that this is the same as Assumption 2.1 of \cite{tametypes}.)

\subsection{Acknowledgements}

We would like to thank Ekaterina Bogdanova, Lin Chen, Gurbir Dhillon, Dennis Gaitsgory, Kevin Lin, Sam Raskin, and Yakov Varshavsky for helpful conversations. 

\section{Groups over $k((t))$}\label{s:prelim}

\subsection{Quasi-split groups and twisted loop algebras}

In our previous paper \cite{yangMP}, we studied the Moy-Prasad theory of split groups over $k((t))$. Our main motivation remains the split case, but the techniques of this paper (and its follow-ups) necessitate working in the generality of non-split groups.

As observed in \cite{pappasrapoport}, the group $G((t))$ of $k((t))$-points of $G$ can still be identified with a twisted loop group. To see this, consider a tame field extension $k((t^{\frac{1}{n}}))/k((t))$ over which $G$ splits. Then, the Galois group $\on{Gal}(k((t^{\frac{1}{n}}))/k((t)))$ acts on $G((t^{\frac{1}{n}}))$ with fixed points $G((t))$.

Let $H$ be the split form over $k$ of $G$, and fix a pinning $(H,T_H,\Delta_H,(e_{\alpha,H}))$ of $H$ (see e.g., Milne \cite{milneRG}, 19.49). Then $G$ can be described as the twist of $H\times_{\Spec k}\Spec k((t))$ by a Galois 1-cocycle into the group $\on{Aut}_{k((t^{\frac{1}{n}}))}(H\times_{\Spec k}\Spec k((t^{\frac{1}{n}})))$ of group automorphisms. By Steinberg's theorem \cite{steinberg}, $G$ is quasi-split and so this Galois 1-cocycle can be chosen to take values in $\on{Aut}_{k((t))}((H,T_H,\Delta_H,(e_{\alpha,H}))\times_{\Spec k}\Spec k((t^{\frac{1}{n}})))\cong \on{Aut}_{k}(H,T_H,\Delta_H,(e_{\alpha,H}))$.

Explicitly, choose a generator $a$ of $\on{Gal}(k((t^{\frac{1}{n}}))/k((t)))$, which will send $t^{\frac{1}{n}}$ to $\zeta t^{\frac{1}{n}}$ for $\zeta$ some primitive $n$th root of unity. The above 1-cocycle sends $a$ to an element of $\on{Aut}_{k}(H,T_H,\Delta_H,(e_{\alpha,H})),$ which gives an automorphism $\sigma$ of $H$. We can sum up this discussion with the following lemma. Let $B_H$ be the Borel subgroup containing $T_H$ determined by the choice of $\Delta_H$. 

\begin{lemma}\label{twistedaffinegroup}
If $H$ is the split form over $k$ of $G$, there exists a positive integer $n$ not divisible by the characteristic of $k$, a generator $a:t^{\frac{1}{n}}\mapsto\zeta t^{\frac{1}{n}}$ of $\on{Gal}(k((t^{\frac{1}{n}}))/k((t)))$, an endomorphism $\sigma$ of $H$ preserving $T_H$ and $B_H$, and an isomorphism
\[
\Xi:G\times_{\on{Spec}k((t))}\on{Spec}k((t^{\frac{1}{n}}))\cong H\times_{\on{Spec}k}\on{Spec}k((t^{\frac{1}{n}}))
\]
which intertwines the action of $\on{id}\times a$ on the left and $\sigma\times a$ on the right. This induces an isomorphism between $G((t))$ and the fixed point ind-scheme of $\sigma\times a$ acting on $H((t^\frac{1}{n}))$.
\end{lemma}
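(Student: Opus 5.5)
We will assemble the lemma from Galois descent together with the standard facts recalled just before it. First we choose the splitting field. By the standing assumption, $G$ splits over a tamely ramified extension of $k((t))$. Since $k$ is algebraically closed, the tamely ramified finite extensions of $k((t))$ are exactly the fields $k((t^{\frac{1}{n}}))$ with $n$ prime to $\on{char}k$, and each of these is Galois with cyclic group $\mu_n(k)\cong\mathbb{Z}/n$. So we may fix such an $n$ with $G$ split over $k((t^{\frac{1}{n}}))$. We then pick a generator $a$ of the Galois group, which necessarily has the form $t^{\frac{1}{n}}\mapsto\zeta t^{\frac{1}{n}}$ for a primitive $n$th root of unity $\zeta$.

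Next, split reductive groups over a field are determined up to isomorphism by their root datum, and $H$ is by definition the split form. Hence there is an isomorphism $\Xi_0: G_{k((t^{1/n}))}\cong H_{k((t^{1/n}))}$. Define $c(g)=\Xi_0\circ {}^g\Xi_0^{-1}$ for $g$ in the Galois group, where ${}^g$ denotes conjugation by $\on{id}\times g$ on $G$ and by $\on{id}_H\times g$ on $H$. This is a $1$-cocycle with values in $\on{Aut}(H_{k((t^{1/n}))})$, and it classifies $G$ as a form of $H$.

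We then use the exact sequence $1\to H^{ad}\to\on{Aut}(H)\to\on{Aut}(H,T_H,\Delta_H,(e_{\alpha,H}))\to1$, which is split by the pinning. By Steinberg's theorem \cite{steinberg}, $G$ is quasi-split, since $k((t))$ has cohomological dimension $\le1$ when $k$ is algebraically closed. Consequently the class of $c$ lies in the image of $H^1(\on{Gal},\on{Aut}(H,T_H,\Delta_H,(e_{\alpha,H})))$. Here the Galois group acts trivially on this finite constant group, so such a cocycle is just a homomorphism. Changing $\Xi_0$ by an element of $\on{Aut}(H_{k((t^{1/n}))})$ replaces $c$ by a cohomologous cocycle, so we obtain $\Xi$ with $c$ valued in pinned automorphisms. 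We set $\sigma=c(a)^{\pm1}$, with the sign fixed by the convention. Then $\sigma$ is an automorphism of $H$ over $k$ preserving $T_H$ and $B_H$. Unwinding the definition of $c$ gives $\Xi\circ(\on{id}\times a)=(\sigma\times a)\circ\Xi$, which is the intertwining property.

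\textbf{Main obstacle.} The delicate step is this reduction of the cocycle into pinned automorphisms. It has two parts: knowing that quasi-split forms correspond to classes coming from $\on{Aut}$ of the pinning, and checking that $H^1$ of the finite cyclic group with values in $H^{ad}(k((t^{1/n})))$, twisted by the outer action, does not obstruct the change of $\Xi_0$. We plan to handle this by citing the standard statement, e.g.\ Milne \cite{milneRG}: quasi-split inner-twisted forms are trivial, so the twist by $H^{ad}$ can be absorbed.

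Finally, for the ind-scheme statement we evaluate on $A$-points. We have $G((t))(A)=G(A((t)))$, and by descent along the finite étale Galois cover $A((t))\to A((t^{\frac{1}{n}}))$ this equals the $a$-fixed points of $G(A((t^{\frac{1}{n}})))$. Transporting through $\Xi$, these are the $(\sigma\times a)$-fixed points of $H(A((t^{\frac{1}{n}})))$. Functoriality in $A$ then gives the isomorphism of ind-schemes.
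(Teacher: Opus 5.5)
Your proposal is correct and follows the same route as the paper's argument: split $G$ over the tame cyclic extension $k((t^{1/n}))$, describe $G$ by a Galois $1$-cocycle valued in $\on{Aut}(H)$, use Steinberg's theorem to move this cocycle into the pinned automorphisms, take $\sigma$ to be its value on the generator $a$, and get the ind-scheme statement by descent on $A$-points. One small inaccuracy: the group of pinned automorphisms need not be finite (it is $GL_2(\mathbb{Z})$ for a rank-two torus), but you only use that it is constant with trivial Galois action; and the obstruction you flag is removed directly by Steinberg's vanishing of $H^1(k((t)),-)$ for the connected twisted adjoint group, since the relevant $H^1$ of $\on{Gal}(k((t^{1/n}))/k((t)))$ injects into it by inflation.
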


\begin{remark}
In the statement of Lemma \ref{twistedaffinegroup}, we fix only a Borel subgroup and a maximal torus inside it, rather than an entire pinning. This weakening will be convenient in a sequel paper for discussing functoriality of $\LocSys$ for twisted Levi subgroups.
\end{remark}

At the level of Lie algebras, the above description gives the following. Let us abuse notation and denote the induced action of $\sigma$ on the Lie algebra $\mathfrak{h}$ again by $\sigma$. 

\begin{lemma}\label{twistedaffine}
There is an automorphism $\theta$ of $\mathfrak{h}((t^{\frac{1}{n}}))$ given by
\[
\theta(h\cdot t^{\frac{i}{n}})=\zeta^{i}\cdot\sigma(h)\cdot t^{\frac{i}{n}},
\]
and $\mathfrak{g}((t))$ can be identified with the fixed points of $\theta$ on $\mathfrak{h}((t^{\frac{1}{n}})$.
\end{lemma}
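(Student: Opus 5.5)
The plan is to derive Lemma \ref{twistedaffine} from Lemma \ref{twistedaffinegroup} by differentiating. First, the isomorphism $\Xi$ induces an isomorphism of Lie algebras
\[
d\Xi:\mathfrak{g}\otimes_{k((t))}k((t^{\frac{1}{n}}))\cong \mathfrak{h}\otimes_k k((t^{\frac{1}{n}}))=\mathfrak{h}((t^{\frac{1}{n}})).
\]
Here we use that the formation of the Lie algebra commutes with base change for smooth group schemes. We also use that $\mathfrak{h}\otimes_k k((t^{\frac1n}))$ equals $\mathfrak{h}((t^{\frac1n}))$ because $\mathfrak{h}$ is finite-dimensional. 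Taking Lie algebras of the semilinear automorphisms $\on{id}\times a$ and $\sigma\times a$, the intertwining property of $\Xi$ shows that $d\Xi$ intertwines $\on{id}\otimes a$ on the left with $d\sigma\otimes a$ on the right. Here $a$ acts on coefficients by $t^{\frac{i}{n}}\mapsto\zeta^i t^{\frac{i}{n}}$.

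Next, I would check that $d\sigma\otimes a$ is exactly the map $\theta$ of the statement. On a simple tensor $h\cdot t^{\frac{i}{n}}$ it gives $\sigma(h)\cdot a(t^{\frac in})=\zeta^i\sigma(h)t^{\frac in}$. By $k$-linearity and continuity in the $t$-adic topology, this determines $\theta$ on all of $\mathfrak{h}((t^{\frac1n}))$: every element is a sum $\sum_{i\geq N} h_i t^{\frac in}$ with $h_i\in\mathfrak{h}$. Since $\sigma$ is a Lie algebra automorphism and $a$ is a field automorphism, $\theta$ preserves brackets. It is bijective with inverse $\sigma^{-1}\otimes a^{-1}$, and it is $a$-semilinear. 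So $\theta$ is a ring automorphism of the Lie algebra, and in particular a $k((t))$-linear automorphism.

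Finally, I would identify the fixed points. Under $d\Xi$, the fixed points of $\theta$ correspond to the fixed points of $\on{id}\otimes a$ on $\mathfrak{g}\otimes_{k((t))}k((t^{\frac1n}))$. By Galois descent for vector spaces (Hilbert 90 in its additive/linear form, for the cyclic Galois extension $k((t^{\frac1n}))/k((t))$), these are exactly $\mathfrak{g}\otimes 1$. Concretely, one chooses a $k((t))$-basis of $\mathfrak{g}$, and then the invariants of $\on{id}\otimes a$ are the tuples of $a$-invariant coefficients, i.e. coefficients in $k((t))$. The identification also respects brackets, so it is an isomorphism of Lie algebras over $k((t))$.

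The only step needing care is the first one: making sense of the "derivative" of the semilinear automorphism $\sigma\times a$. I would handle it functorially. Define $\mathfrak{g}((t))$ as the kernel of $G(k((t))[\epsilon]/\epsilon^2)\to G(k((t)))$, and similarly for $H$ over $k((t^{\frac1n}))[\epsilon]$, with $a$ acting on coefficients. Lemma \ref{twistedaffinegroup} applied to dual-number points then gives the intertwining directly, and the computation of $\theta$ on simple tensors follows.
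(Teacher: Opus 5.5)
Your proposal is correct and matches the paper's approach. The paper obtains Lemma~\ref{twistedaffine} in one line, by passing to Lie algebras in Lemma~\ref{twistedaffinegroup}; you spell out the same route, namely differentiating $\Xi$ via dual-number points, checking that $d\sigma\otimes a$ is the stated $\theta$, and using Galois descent to identify the $\theta$-fixed points with $\mathfrak{g}$.
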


We note that the isomorphisms of Lemmas \ref{twistedaffinegroup} and \ref{twistedaffine} are not canonical. We will thus aim to make the statements of our main theorems independent of such a choice of isomorphism, but we will often find it convenient to fix a choice in the proofs.

\subsection{The Bruhat-Tits building}\label{ss:building}

The theory of the Bruhat-Tits building is deep and extensive, and here we can only give a cursory overview. For a more comprehensive treatment, we refer the reader to the recent book \cite{kalethaprasad} of Kaletha-Prasad.

Let $\mathcal{B}(G)$ be the Bruhat-Tits building of $G$. This is a topological space which also has a natural structures of a polysimplicial complex. It also carries a canonical action of the discrete group $G(k((t)))$ of $k((t))$-points of $G$. 

For any maximal split torus $T_G$ of $G$, there is a polysimplicial sub-complex $\mathcal{A}(T_G)$ of $\mathcal{B}(G)$, called the apartment of $\mathcal{B}(G)$ corresponding to $T_G.$ As a topological space, it is isomorphic to $X_*(T_G/(T_G\cap Z_G))\otimes\mathbb{R}$ (equipped with the Euclidean topology), where $Z_G$ is the center of $G$. We have an equality $\mathcal{B}(G)=G(k((t)))\cdot\mathcal{A}(T_G)$, which we will frequently use to reduce statements for $\mathcal{B}(G)$ to statements for $\mathcal{A}(T_G)$.

It is easiest to describe the polysimplicial structure of $\mathcal{A}(T_G)$ when $T_G$ is of a specific form. Fix $(n,a,\sigma,\Xi)$ as in Lemma \ref{twistedaffinegroup}. Let $T$ be the connected component of the identity in the fixed point locus $T_H^{\sigma}.$ Then $T\times_{\Spec k}\Spec k((t))$ becomes a maximal split torus of $G$ after choosing an isomorphism as in Lemma \ref{twistedaffinegroup}. Let us describe $\mathcal{A}(T\times_{\Spec k}\Spec k((t)))$ (henceforth abbreviated to $\mathcal{A}(T)$, though we warn the reader that this subspace of $\mathcal{B}(G)$ implicitly depends on the choice of $\Xi$ in Lemma \ref{twistedaffinegroup}.) As an affine space, $\mathcal{A}(T)$ is isomorphic to $X_*(T/(T\cap Z_H))\otimes\mathbb{R}$, where $Z_H$ is the center of $H$.

Diagonalizing $\mathfrak{g}((t))$ with respect to the $T$ action and the degree operator, we get the affine root decomposition

\[
\mathfrak{g}((t))\cong\displaystyle\bigoplus_{\substack{\alpha\in X^*(T)\\ i\in\frac{1}{n}\mathbb{Z}}}\mathfrak{h}_{\alpha}^it^i,
\]
where $h_{\alpha}$ is the root space corresponding to $\alpha$ and $h_{\alpha}^i$ is its $\zeta^{ni}$-eigenspace for $\sigma$. (To save ink, future sums of affine root spaces will implicitly be indexed by an element $\alpha\in X^*(T)$ and a number $i\in\frac{1}{n}\mathbb{Z}$.) Each non-zero affine root space $\mathfrak{h}_{\alpha}^it^i$ corresponds to a functional on $\mathcal{A}(T)$, given by $x\mapsto\langle x,\alpha\rangle+i.$ These functionals form an affine root system, so their zero sets are hyperplanes which induce a poly-simplicial decomposition of $\mathcal{A}(T)$ whose maximal facets are (the closures of) the connected components of the complement of the hyperplane arrangement. This gives the desired structure of poly-simplicial complex on $\mathcal{A}(T)$.

To each point $x\in\mathcal{B}(G)$, Bruhat-Tits associates a parahoric subgroup $P_x\subseteq G((t))$. When $x$ lies in the apartment $\mathcal{A}(T)$, $P_x$ has Lie algebra
\[
\mathfrak{p}_{x}=\displaystyle\bigoplus_{\langle\alpha,x\rangle+i\thinspace\geq\thinspace 0}\mathfrak{h}_{\alpha}^it^i
\]
and its unipotent radical $P_x^+$ has Lie algebra
\[
\mathfrak{p}_x^+=\displaystyle\bigoplus_{\langle\alpha,x\rangle+i\thinspace>\thinspace 0}\mathfrak{h}_{\alpha}^it^i.
\]

It is clear that for $x\in\mathcal{A}(T)$, $P_x$ depends only on the facet in which $x$ is contained. Conversely, knowledge of $P_x$ is sufficient to recover the point $x$. This will be more broadly true for the entire building. In other words, facets of $\mathcal{B}(G)$ are in bijection with parahoric subgroups of $G((t))$, and the inclusion relation on facets is the inverse of the inclusion relation on parahorics. In particular, vertices of $\mathcal{B}(G)$ correspond to maximal parahorics, and maximal simplicies of $\mathcal{B}(G)$ correspond to Iwahori subgroups of $G$. This bijection intertwines the action of $G(k((t)))$ on $\mathcal{B}(G)$ with the conjugation action on parahorics.

For any positive integer $m$, we can also consider the building $\mathcal{B}(G,k((t^{\frac{1}{m}})))$ associated to the algebraic group $G\times_{\Spec k}\Spec k((t^{\frac{1}{m}}))$ over $k((t^{\frac{1}{m}}))$. There is an action of $\on{Gal}(k((t^{\frac{1}{m}}))/k((t)))$ on $\mathcal{B}(G,k((t^{\frac{1}{m}})))$, and $\mathcal{B}(G)$ can be recovered as the fixed points of this Galois action. If we take $m=n$, then $T_H\times_{\Spec k}\Spec k((t^{\frac{1}{n}}))$ can be viewed a maximal torus inside $H\times_{\Spec k}\Spec k((t^{\frac{1}{n}}))\cong G\times_{\Spec k((t))}\Spec k((t^{\frac{1}{n}}))$, and the associated apartment (henceforth denoted $\mathcal{A}(T_H)$) in $\mathcal{B}(G,k((t^{\frac{1}{n}})))$ will contain $\mathcal{A}(T)$ as the set of Galois-fixed points.

Finally, let us also define the extended Bruhat-Tits building $\widetilde{\mathcal{B}}(G)$, which is defined to be the product of $\mathcal{B}(G)$ with the affine space $X_*(Z_G)\otimes\mathbb{R}$. The extended apartment $\widetilde{\mathcal{A}}(T)$ will then become isomorphic to $X_*(T)\otimes\mathbb{R}$. We can similarly define $\widetilde{\mathcal{B}}(G,k((t^{\frac{1}{m}})))$ and $\widetilde{\mathcal{A}}(T_H)\cong X_*(T_H)\otimes\mathbb{R}$. The extended building is technically not a poly-simplicial complex; however, it enjoys arguably better functoriality properties. 

The stabilizers of points in the extended building can be given a group scheme structure via the following lemma.

\begin{lemma}\label{stabilizerconstruction}
Let $x$ be a point of $\widetilde{\mathcal{B}}(G)$. Then there is a unique subgroup scheme $G^x\subseteq G((t))$ satisfying the two following properties.
\begin{enumerate}
\item The connected component of the identity in $G^x$ is equal to $P^x$.
\item The group of $k$-points $G^x(k)$ is the stabilizer inside $G(k((t)))$ of $x$.
\end{enumerate}
\end{lemma}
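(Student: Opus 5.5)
The plan is to build $G^x$ as a disjoint union of translates of the parahoric $P^x=P_x$, indexed by the component group of the stabilizer. Uniqueness is the easy part, so we settle it first. Suppose $G^x$ and $G'^x$ both satisfy (1) and (2). Both contain $P_x$ as their identity component, so each is a union of $P_x$-cosets. Since $k$ is algebraically closed and $P_x$ is connected, every $P_x$-coset meeting either group has a $k$-point. By (2) the two groups have the same $k$-points, hence they are unions of the same cosets. This requires $G^x$ to be locally of finite type over $P_x$, i.e. a union of cosets, which holds because the identity component has finite index in any group ind-scheme of this kind whose reduced structure we control. We will simply take $G^x$ to be reduced, or equivalently define it as the union of cosets.

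For existence, let $\on{Stab}(x)\subseteq G(k((t)))$ be the stabilizer of $x$ in $\widetilde{\mathcal{B}}(G)$, and define
\[
G^x:=\bigcup_{g\in \on{Stab}(x)} gP_x \subseteq G((t)).
\]
The steps are as follows.

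\textbf{(a)} $P_x(k)\subseteq\on{Stab}(x)$, and $\on{Stab}(x)$ normalizes $P_x$. The first holds because the parahoric fixes $x$. For the second, the bijection between facets and parahorics is equivariant, so $gP_xg^{-1}=P_{gx}=P_x$. Hence $G^x$ is a subgroup functor.

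\textbf{(b)} $\on{Stab}(x)/P_x(k)$ is finite. Reduce to $x\in\widetilde{\mathcal{A}}(T)$ via $\mathcal{B}(G)=G(k((t)))\cdot\mathcal{A}(T)$. The stabilizer of a point of the extended building is contained in the stabilizer of its facet in $\mathcal{B}(G)$ and acts trivially on the $X_*(Z_G)\otimes\mathbb{R}$ factor, i.e. it lies in the kernel of the Kottwitz-type map $G(k((t)))\to X_*(Z_G)\otimes\mathbb{R}$. Using the Iwahori--Bruhat decomposition, this stabilizer modulo $P_x(k)$ embeds into the finite group $\Omega_x$ of elements of the extended affine Weyl group (normalizer of $T$ modulo its bounded part) that fix $x$. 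This group is finite because it acts on the finitely many affine roots vanishing at $x$ and has bounded translation part.

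\textbf{(c)} Since the union is finite, $G^x$ is a finite disjoint union of closed translates $gP_x$. It is therefore a closed subgroup scheme of $G((t))$, and its identity component is $P_x$, giving (1). Its $k$-points are $\on{Stab}(x)\cdot P_x(k)=\on{Stab}(x)$, giving (2).

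The main obstacle is step (b): controlling the stabilizer in the \emph{extended} building for a non-split $G$. Here one must verify that the Iwahori decomposition and the description of the stabilizer in terms of the normalizer of $T$ carry over to the twisted loop group of Lemma \ref{twistedaffinegroup}. I would import this from Kaletha--Prasad \cite{kalethaprasad}, where $\on{Stab}(x)$ is the $k$-points of the Bruhat--Tits group scheme $\mathcal{G}_x^\dagger$, whose identity component is the parahoric group scheme. Taking loop-type points of that smooth affine group scheme over $k[[t]]$ then directly gives $G^x$.
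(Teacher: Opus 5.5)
Your argument is correct, but it reaches the key finiteness by a different route from the paper's.

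The paper argues through the Kottwitz map. It uses Pappas--Rapoport to identify the $k$-points of the neutral component of $G((t))$ with $\ker\kappa$, and cites Haines--Rapoport for $\on{Stab}(x)\cap\ker\kappa=P_x(k)$. It then builds $G^x$ as one translate of $P_x$ for each connected component of $G((t))$ meeting $\on{Stab}(x)$. Finiteness comes from boundedness of $\on{Stab}(x)$, which makes its $\kappa$-image finite.

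You instead put $G^x=\bigcup_{g\in\on{Stab}(x)}gP_x$ and need only that $P_x(k)$ is normal of finite index in $\on{Stab}(x)$. The identity component is then $P_x$ because the finitely many cosets are pairwise disjoint closed subschemes and $P_x$ is connected. So neither the Kottwitz map nor $\pi_0(G((t)))$ enters. Your step (b) uses $G(k((t)))=P_x(k)\,N(k((t)))\,P_x(k)$ and finiteness of point stabilizers in the Iwahori--Weyl group. This is essentially the first half of the Haines--Rapoport argument, and it is all the lemma needs. The paper's route buys more: $\on{Stab}(x)/P_x(k)\hookrightarrow\pi_1(G)_I$, i.e.\ distinct components of $G^x$ lie in distinct components of $G((t))$.

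Two precisions are needed in (b).
\begin{enumerate}
\item The Iwahori--Weyl group is the normalizer modulo the $k[[t]]$-points of the connected N\'eron model of $Z_G(T)$. It is not the normalizer modulo the maximal bounded subgroup of $Z_G(T)(k((t)))$: the extra elements of that subgroup fix the apartment pointwise but in general do not lie in $P_x(k)$, so with that quotient your map to $\on{Stab}(x)/P_x(k)$ is not well defined.
\item $\on{Stab}(x)/P_x(k)$ is a quotient of the stabilizer of $x$ in this Iwahori--Weyl group, not a subgroup of it. Finiteness still follows.
\end{enumerate}

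Your uniqueness paragraph, like the paper's ``straightforward'', leaves the one real point implicit. Your appeal to ``controlling the reduced structure'' is not an argument. What is needed is that each connected component of a competing subgroup scheme contains a $k$-point; once that holds, your coset comparison goes through. This is effectively how the paper uses the lemma, cf.\ the reduced subgroup $N_s^x$.

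Your closing alternative $G^x=L^+\mathcal{G}_x^\dagger$ via Kaletha--Prasad is also fine, and it makes affineness immediate.
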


\begin{proof}
By Theorem 0.1 of \cite{pappasrapoport}, the set of $k$ points of the connected component of the identity in $G((t))$ can be identified with the kernel of the Kottwitz homomorphism $\kappa:G(k((t)))\rightarrow\pi_1(G)_I$ (see loc.cit. for the notation). Furthermore, by \cite{onparahoricsubgroups}, the intersection of the stabilizer of $x$ with the kernel of $\kappa$ is equal to $P_x(k)$. We can thus construct $G^x$ as a disjoint union of suitable translates of $P_x$, one for each connected component of $G((t))$ containing a $k$-point stabilizing $x$. To see that $G^x$ is a scheme (as opposed to an ind-scheme), it suffices to know that the number of translates is finite. But as the stabilizer inside $G(k((t)))$ of $x$ is bounded, its image under $\kappa$ is finite, as desired. Conversely, it is straightforward to see that any subgroup scheme satisfying the two properties of the lemma must agree with the one constructed above.
\end{proof}

\subsection{Moy-Prasad subgroups of quasi-split groups}\label{ss:moyprasad}

For every point $x$ in $\mathcal{B}(G)$, Moy and Prasad \cite{MP1}\footnote{Technically, \cite{MP1} works only with groups over non-archimedean local fields. However, the same arguments apply to the case of $k((t))$. This is made explicit in e.g. Chapter 13 of \cite{kalethaprasad}.} define a series of subgroups $K_{x,r}$ of $G((t))$. These subgroups satisfy $K_{gx,r}=gK_{x,r}g^{-1},$ so to fix the Moy-Prasad subgroups it suffices to specify them for $x$ in the apartment $\mathcal{A}(T)$, where $T$ is as in the previous subsection.

When $r=0$, the subgroups $K_{x,0}$ and $K_{x,0+}$ recover the subgroups $P_x$ and $P_x^+$. More generally, let $r$ be any real number. Then we can define the following subspaces of $\mathfrak{g}((t))$:
\[
\mathfrak{k}_{x,r}=\displaystyle\bigoplus_{\langle\alpha,x\rangle+i\thinspace\geq\thinspace r}\mathfrak{h}_{\alpha}^it^i
\]
and
\[
\mathfrak{k}_{x,r+}=\displaystyle\bigoplus_{\langle\alpha,x\rangle+i\thinspace>\thinspace r}\mathfrak{h}_{\alpha}^it^i.
\]
When $r$ is nonnegative, these subspaces are in fact Lie sub-algebras. As expected, for $r=0$, we recover $\mathfrak{p}_x$ and $\mathfrak{p}_x^+$. For positive $r$, both $\mathfrak{k}_{x,r}$ and $\mathfrak{k}_{x,r+}$ are contained in $\mathfrak{p}_x^+$, and as $P_x^+$ is pro-unipotent, we can exponentiate to get subgroups $K_{x,r}$ and $K_{x,r+}$. 

\begin{example}
Take $G$ to be split. A splitting gives rise to a canonical origin (denoted by $0$) inside $\mathcal{A}(T).$ Then for $n$ a nonnegative integer, $K_{0,n}$ is the $n$th congruence subgroup and $K_{0,n+}$ is the $(n+1)$th congruence subgroup.
\end{example}

The $\mathfrak{k}_{x,r}$ satisfy nice commutation properties. It is quickly checked that we have
\[
[\mathfrak{k}_{x,r},\mathfrak{k}_{x,s}]\subseteq\mathfrak{k}_{x,r+s}
\]
for any real numbers $r$ and $s$. When $r$ and $s$ are nonnegative, this can be integrated to the corresponding subgroups:
\[
[K_{x,r},K_{x,s}]\subseteq K_{x,r+s},
\]
and analogous relations for the $\mathfrak{k}_{x,r+}$ and $K_{x,r+}$ follow formally. A particularly important consequence is that the adjoint action of $P_x$ preserves all of the $\mathfrak{k}_{x,r}$, and so for $r\geq 0$, $K_{x,r}$ is a normal subgroup of $P_x$.

We briefly note that we can also define Moy-Prasad subgroups for $x'\in\widetilde{\mathcal{B}}(G)$ by setting $K_{x',r}=K_{x,r},$ where $x$ is the image of $x'$ in $\mathcal{B}(G).$

\subsection{Twisted Levi subgroups}

We will need a generalization of the class of Levi subgroups.

\begin{definition}
A \textbf{twisted Levi subgroup} of $G$ is a subgroup $G_0\subseteq G$ which becomes a Levi subgroup after passing to some extension of $k((t))$.
\end{definition}

\begin{definition}
Let $\TL(G)$ be the set of conjugacy classes of twisted Levi subgroups, equipped with the partial order where the conjugacy class of $G_0$ is less than or equal to the conjugacy class of $G_1$ iff $G_1$ contains a conjugate of $G_0$. We let $\TL(G)^\diamond$ be the poset obtained by adjoining to $\TL(G)$ a new maximal element which we will denote by $\diamond$. 
\end{definition}

To simplify notation, we will choose a representative for each conjugacy class in $\TL(G)$, and we will identify $\TL(G)$ with the set of chosen representatives. This allows us to take an element $G_0\in\TL(G)$ and work with $G_0$ as an actual subgroup. 

For any twisted Levi subgroup $G_0\subseteq G,$ we get a natural inclusion $\widetilde{\mathcal{B}}(G_0)\subseteq\widetilde{\mathcal{B}}(G)$. This inclusion is compatible with Moy-Prasad subalgebras in the following way. Let $x$ be a point of $\widetilde{\mathcal{B}}(G_0)$. We will abuse notation and refer to its image in $\widetilde{\mathcal{B}}(G)$ again by $x$. Starting from $x$ and a rational number $r$, we can either take the Moy-Prasad subalgebra $\mathfrak{k}_{x,r}\subseteq\mathfrak{g}((t))$ or the Moy-Prasad subalgebra $\mathfrak{k}_{x,r}^{G_0}\subseteq\mathfrak{g}_0((t)).$ Then we have
\[
\mathfrak{k}_{x,r}^{G_0}=\mathfrak{k}_{x,r}\cap\mathfrak{g}_0((t)).
\]

\section{The vector space $\mathfrak{k}_{x,r}/\mathfrak{k}_{x,r+}$}\label{s:stratification}

In this section, we will study, for a fixed $x\in\mathcal{B}(G)$ and $r\in\mathbb{R}$, the vector space $\mathfrak{k}_{x,r}/\mathfrak{k}_{x,r+}$. For $r>0$, this vector space is isomorphic to $K_{x,r}/K_{x,r+}$, and characters of this group play a central role in \cite{MP1}.

The commutation relations for Moy-Prasad subgroups imply that the adjoint action $P_x$ on $\mathfrak{k}_{x,r}/\mathfrak{k}_{x,r+}$ factors through $P_x/P_x^+,$ which is a reductive algebraic group of finite type over $k$. Let $L_x$ denote the quotient $P_x/P_x^+.$ In \cite{MP1}, it was discovered (in the local field context) that semistable $L_x$-orbits in $\mathfrak{k}_{x,r}/\mathfrak{k}_{x,r+}$ control the representation theory of $G((t))$. This section is devoted to the study of such orbits, culminating in Theorem \ref{basecase} (proven in the next section), which reduces the study of these orbits to that of analogous orbits for a twisted Levi subgroup.

It will be helpful to fix $(n,a,\sigma,\Xi)$ as in Lemma \ref{twistedaffinegroup}. We will also fix a maximal $k$-torus in $T_H^{\sigma}$, so we can define the apartment $\mathcal{A}(T)$ in $\mathcal{B}(G)$. However, it is important for applications that our definitions and theorems do not depend on these choices, except when the chosen objects explicitly appear in the statements. This choice-independence is immediate from the structure of the proofs.

\subsection{The semistable locus}

Recall that the semistable locus $(\mathfrak{k}_{x,r}/\mathfrak{k}_{x,r+})^{\circ}$ is defined to be the loci of points which do not contain $0$ in their $L_x$-orbit closure. Its complement is the unstable locus $(\mathfrak{k}_{x,r}/\mathfrak{k}_{x,r+})^{\on{us}}$. The relevance of these loci is explained by the following result, which is an adaptation of Section 6 of \cite{MP1} to our setting.\footnote{See also Theorem 3.2 of our previous paper \cite{yangMP}, which is essentially the same result, but stated in the language of categorical representations.}

\begin{theorem}\label{ssvsdepth}
Let $v$ be an element of $\mathfrak{k}_{x,r}$. Then $v$ lies in $\mathfrak{k}_{y,r+}$ for some $y\in\mathcal{B}(G)$ if and only if the image $\overline{v}$ of $v$ in $\mathfrak{k}_{x,r}/\mathfrak{k}_{x,r+}$ lies in $(\mathfrak{k}_{x,r}/\mathfrak{k}_{x,r+})^{\on{us}}$.
\end{theorem}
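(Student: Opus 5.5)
We reduce both directions to the Hilbert–Mumford criterion for the reductive group $L_x$ acting linearly on $V=\mathfrak{k}_{x,r}/\mathfrak{k}_{x,r+}$. We also use the dictionary between one-parameter subgroups of $L_x$ and directions in the building emanating from $x$. After conjugating by $G(k((t)))$ we may assume $x\in\mathcal{A}(T)$. Every maximal torus of $L_x$ is $L_x$-conjugate to the image $\overline{T}$ of $T$ (more precisely, of the maximal split torus of $G$ contained in $P_x$). By the Hilbert–Mumford criterion, $\overline{v}$ is unstable if and only if there are $g\in P_x$ and a cocharacter $\lambda\in X_*(T)$ such that $\mathrm{Ad}(g)\overline{v}$ lies in the sum of the $\lambda$-weight spaces of strictly positive weight. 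Here the $\overline{T}$-weight spaces of $V$ are the images of the affine root spaces $\mathfrak{h}_\alpha^it^i$ with $\langle\alpha,x\rangle+i=r$, and the weight of such a space is $\langle\alpha,\lambda\rangle$.

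For the direction ``unstable $\Rightarrow$ depth $>r$ somewhere'', replace $v$ by $\mathrm{Ad}(g)v$; this is harmless, since $g\in P_x$ fixes $x$ and we may conjugate $y$ back at the end. Now take $y=x+\epsilon\lambda\in\mathcal{A}(T)$ for small $\epsilon>0$. Write $v=v_0+v_+$ with $v_+\in\mathfrak{k}_{x,r+}$ and $v_0$ a sum of components in affine root spaces with $\langle\alpha,x\rangle+i=r$ and $\langle\alpha,\lambda\rangle>0$. Each such component satisfies $\langle\alpha,y\rangle+i=r+\epsilon\langle\alpha,\lambda\rangle>r$. The remaining part $v_+$ has only finitely many affine roots with value in $(r,r+1]$ at $x$, since the other components lie deep in the filtration. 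So for $\epsilon$ small these still have value $>r$ at $y$. Hence $v\in\mathfrak{k}_{y,r+}$. Elements of $\mathfrak{k}_{x,r}$ are not finite sums, but the filtration is periodic up to $t^{1/n}$-shifts. Uniformly, $\mathfrak{k}_{x,r+\delta}\subseteq\mathfrak{k}_{y,r+}$ once $\epsilon\|\lambda\|<\delta$, where $\delta$ is the gap to the next jump. So only the finite-dimensional piece $\mathfrak{k}_{x,r}/\mathfrak{k}_{x,r+\delta}$ matters.

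For the converse, suppose $v\in\mathfrak{k}_{y,r+}$. Choose an apartment containing both $x$ and $y$; after conjugating by an element of $P_x$ (which fixes $x$ and acts on $V$ through $L_x$), it is $\mathcal{A}(T)$. The geodesic from $x$ to $y$ gives a direction $y-x\in X_*(T)\otimes\mathbb{R}$, which we perturb to a rational and then integral cocharacter $\lambda$. Points $z=x+s(y-x)$ for small $s>0$ are in the same facet-germ. By convexity of $z\mapsto\langle\alpha,z\rangle+i$ along the segment, any affine root $\psi$ appearing in $v$ with $\psi(x)\geq r$ and $\psi(y)>r$ satisfies the following: if $\psi(x)=r$, then $\psi(y)-\psi(x)>0$, i.e.\ $\langle\alpha,y-x\rangle>0$. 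Thus every component of $\overline{v}$ has strictly positive $\lambda$-weight, where $\lambda$ approximates $y-x$ closely enough to preserve the signs of the finitely many relevant pairings. Hence $\lambda(s)\cdot\overline{v}\to0$ as $s\to0$, so $\overline{v}$ is unstable.

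The main obstacle is the bookkeeping in the non-split setting. We must check that $\overline{T}$ is a maximal torus of $L_x$, so that Hilbert–Mumford applies with only cocharacters of $T$. This holds because the image of $T$ in $L_x$ is its maximal torus when $T$ is maximal split and $G$ is quasi-split, and the tameness hypothesis ensures the root data of $L_x$ are read off from the affine roots vanishing at $x$. We must also check that two points of $\mathcal{B}(G)$ lie in a common apartment $\mathcal{A}(gT)$ with $g\in P_x$. This is standard Bruhat–Tits theory, e.g.\ \cite{kalethaprasad}, and is the step I would cite rather than reprove.
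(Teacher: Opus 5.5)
Your proposal is correct and is essentially the paper's argument (Lemma \ref{lem:ssvsdepth}). One direction uses Hilbert--Mumford, conjugates the destabilizing cocharacter into $T$, and moves to $y=x+\epsilon\lambda$; the other approximates $y-x$ by an integral cocharacter in the open cone cut out by $\langle\alpha,\cdot\rangle>0$. Your gap estimate $\mathfrak{k}_{x,r+\delta}\subseteq\mathfrak{k}_{y,r+}$ plays the role of the paper's inclusions $\mathfrak{k}_{x,r+}\subseteq\mathfrak{k}_{y,r+}\subseteq\mathfrak{k}_{x,r}$.

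The fact you planned to cite, that $P_x$ acts transitively on apartments through $x$, is stronger than needed. As the paper does, you can instead conjugate the pair $(x,y)$ jointly into $\mathcal{A}(T)$ by an element of $G(k((t)))$, since the statement is invariant under such conjugation.
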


The key to proving this is the following lemma.

\begin{lemma}\label{lem:ssvsdepth}
For every point $y$ in $\mathcal{B}(G)$, define $Z_y\subset\mathfrak{k}_{x,r}/\mathfrak{k}_{x,r+}$ to be the image of $\mathfrak{k}_{x,r}\cap\mathfrak{k}_{y,r+}\rightarrow\mathfrak{k}_{x,r}/\mathfrak{k}_{x,r+}$. Then we have
\[
\displaystyle\bigcup_{y\in\mathcal{B}(G)}Z_y=\displaystyle\bigcup_{\substack{y\in\mathcal{B}(G)\\\mathfrak{k}_{x,r+}\subseteq \mathfrak{k}_{y,r+}\subseteq \mathfrak{k}_{x,r}}}Z_y=(\mathfrak{k}_{x,r+}/\mathfrak{k}_{x,r})^{\on{us}}
\]
at the level of $k$-points.
\end{lemma}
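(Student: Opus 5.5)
The plan is to prove the chain of inclusions
\[
\bigcup_{\substack{y\\ \mathfrak{k}_{x,r+}\subseteq\mathfrak{k}_{y,r+}\subseteq\mathfrak{k}_{x,r}}}Z_y\;\subseteq\;\bigcup_y Z_y\;\subseteq\;(\mathfrak{k}_{x,r}/\mathfrak{k}_{x,r+})^{\on{us}}\;\subseteq\;\bigcup_{\substack{y\\ \mathfrak{k}_{x,r+}\subseteq\mathfrak{k}_{y,r+}\subseteq\mathfrak{k}_{x,r}}}Z_y .
\]
(The quotient in the statement should of course read $\mathfrak{k}_{x,r}/\mathfrak{k}_{x,r+}$.) The first inclusion is trivial. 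The tools for the other two are:
\begin{itemize}
\item the equivariance $\on{Ad}(p)Z_y=Z_{py}$ for $p\in P_x$, which holds because $\mathfrak{k}_{py,r+}=\on{Ad}(p)\mathfrak{k}_{y,r+}$ and $P_x$ preserves $\mathfrak{k}_{x,r}$ and $\mathfrak{k}_{x,r+}$;
\item the fact that the unstable locus is $L_x$-stable;
\item the Hilbert--Mumford criterion for the reductive group $L_x$ acting on the finite-dimensional space $\mathfrak{k}_{x,r}/\mathfrak{k}_{x,r+}$.
\end{itemize}

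\textbf{Second and third inclusions.} Let $v\in\mathfrak{k}_{x,r}\cap\mathfrak{k}_{y,r+}$. I choose an apartment containing both $x$ and $y$ and move it by an element of $P_x$ (more precisely, of the stabilizer of $x$) onto $\mathcal{A}(T)$. By equivariance I may then assume $x,y\in\mathcal{A}(T)$, and I decompose $v=\sum v_\psi$ into affine root components $\psi=\alpha+i$.

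Put $y_\epsilon=x+\epsilon(y-x)$. Only finitely many gradients $\alpha$ occur, so $|\psi(y_\epsilon)-\psi(x)|\le C\epsilon$ uniformly in $\psi$. The values $\psi(x)$ also lie in a discrete set. Hence for $\epsilon$ small:
\begin{itemize}
\item $\psi(x)>r$ implies $\psi(y_\epsilon)>r$, and $\psi(y_\epsilon)>r$ implies $\psi(x)\ge r$, which gives $\mathfrak{k}_{x,r+}\subseteq\mathfrak{k}_{y_\epsilon,r+}\subseteq\mathfrak{k}_{x,r}$;
\item by convexity, $\psi(x)\ge r$ and $\psi(y)>r$ imply $\psi(y_\epsilon)>r$, so $Z_y\subseteq Z_{y_\epsilon}$.
\end{itemize}
This proves the second inclusion.

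For the third, look at the image $\overline v$ in the quotient. It involves only those $\psi$ with $\psi(x)=r$ and $\psi(y)>r$, i.e. with $\langle\alpha,y-x\rangle>0$. Perturb $y-x$ to a rational vector, keeping these finitely many strict inequalities, and scale it to a cocharacter $\lambda\in X_*(T)$. The torus $T$ maps into $L_x$, and $\lambda$ acts on $\mathfrak{h}^i_\alpha t^i$ with weight $\langle\alpha,\lambda\rangle>0$. Therefore $\lim_{s\to0}\lambda(s)\cdot\overline v=0$, so $\overline v$ is unstable.

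\textbf{Fourth inclusion.} Let $\overline v$ be unstable. By Hilbert--Mumford there is a cocharacter $\lambda$ of $L_x$ with $\lambda(s)\overline v\to0$. After conjugating by some $g\in L_x$ (lifted to $P_x$, and replacing $v$ by $\on{Ad}(g)v$ via equivariance), $\lambda$ lands in a fixed maximal torus of $L_x$.

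The main obstacle is identifying this maximal torus with the image of $T$. One needs that $T$, a maximal split torus of the quasi-split group $G$, gives a maximal torus of $L_x=P_x/P_x^+$ when $x\in\mathcal{A}(T)$. I would check this from the explicit root description: $\mathrm{Lie}(L_x)=\bigoplus_{\psi(x)=0}\mathfrak{h}_\alpha^it^i$, and the weight-zero part $\mathfrak{h}^0_0$ equals $\mathrm{Lie}(T_H^\sigma)$. Combined with $k$ algebraically closed, this should give the identification.

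Granting it, $\overline v$ is a sum of components in $\mathfrak{h}^i_\alpha t^i$ with $\psi(x)=r$ and $\langle\alpha,\lambda\rangle>0$. I lift $\overline v$ to $v\in\mathfrak{k}_{x,r}$ supported only on those components and set $y=x+\epsilon\lambda$ with $\epsilon$ small. Then $\psi(y)=r+\epsilon\langle\alpha,\lambda\rangle>r$ on the support of $v$, so $v\in\mathfrak{k}_{y,r+}$. By the perturbation argument above, the inclusions $\mathfrak{k}_{x,r+}\subseteq\mathfrak{k}_{y,r+}\subseteq\mathfrak{k}_{x,r}$ also hold. Undoing the conjugation finishes the proof.
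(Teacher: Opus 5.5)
Your proof is correct and follows essentially the same route as the paper. Each $Z_y$ is shown to be unstable via a common apartment and a rational cocharacter $\lambda\in X_*(T)$ with $\langle\alpha,\lambda\rangle>0$; the converse goes through Hilbert--Mumford, conjugation of the destabilizing cocharacter into $T$, and $y=x+\epsilon\lambda$. The only differences are cosmetic: your $y_\epsilon$ perturbation (labelled the ``second inclusion'' but actually proving the reverse of the first) is redundant once the chain closes, and your check that $T$ is a maximal torus of $L_x$ and your discreteness argument for $\mathfrak{k}_{x,r+}\subseteq\mathfrak{k}_{y,r+}\subseteq\mathfrak{k}_{x,r}$ spell out points the paper leaves implicit.
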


\begin{proof}
For convenience we will take $x$ and $y$ to live in the extended building $\widetilde{\mathcal{B}}(G)$ instead, which clearly does not affect the truth of the statement.

First we show that each $Z_y$ is contained in $(\mathfrak{k}_{x,r}/\mathfrak{k}_{x,r+})^{\on{us}}.$ Every two points of $\widetilde{\mathcal{B}}(G)$ lie in a common apartment, and so we can conjugate $x$ and $y$ to lie in $\widetilde{\mathcal{A}}(T)\cong X_*(T)\otimes\mathbb{R}$. Then we have an isomorphism
\[
\mathfrak{k}_{x,r}/\mathfrak{k}_{x,r+}\cong\displaystyle\bigoplus_{\langle\alpha,x\rangle+ i \thinspace=\thinspace r}\mathfrak{h}_{\alpha}^{i}t^i.
\]
The subspace $\mathfrak{h}_{\alpha}^{i}t^i$, for $i$ and $\alpha$ such that $\langle\alpha,x\rangle+ i = r$, lies in $\mathfrak{k}_{y,r+}$ if and only if $\langle\alpha,y\rangle+ i> r,$ or equivalently, $\langle\alpha,y-x\rangle> 0$. It thus follows that
\[
Z_y = \displaystyle\bigoplus_{\substack{\langle\alpha,x\rangle+ i \thinspace=\thinspace r\\ \langle\alpha,y-x\rangle > 0}}\mathfrak{h}_{\alpha}^{i}t^i.
\]

Note that there is a coweight $z\in X_*(T)$ such that $\langle z,\alpha\rangle >0$ for all $\alpha$ with $\langle y-x,\alpha\rangle >0$. Indeed, if $z$ is allowed to lie in $X_*(T)\otimes\mathbb{R}$, the loci of such $z$ is an open cone containing $\alpha$. Every such cone intersects every lattice, including $X_*(T)$, so we can find such a $z$ in $X_*(T)$. By our assumptions, for every point $p\in Z_y,$ we have
\begin{equation}\label{eq:hilbertmumford}
\lim_{a\rightarrow0}z(a)\cdot p=0,
\end{equation}
so the orbit closure of $p$ contains $0$, as desired. 

It remains to show that for every $p\in(\mathfrak{k}_{x,r}/\mathfrak{k}_{x,r+})^{\on{us}}$ there is some $y\in\widetilde{\mathcal{B}}(G)$ with $\mathfrak{k}_{x,r+}\subseteq \mathfrak{k}_{y,r+}\subseteq \mathfrak{k}_{x,r}$ and $p\in Z_y$. We can assume without loss of generality that $x$ lies in $\mathcal{A}(T)$. By the Hilbert-Mumford criterion, there is a 1-parameter subgroup $z:\mathbb{G}_m\rightarrow L_x$ such that (\ref{eq:hilbertmumford}) is satisfied.

As $z$ is a 1-parameter subgroup in the reductive group $L_x$, it lies in a conjugate $\overline{a}T\overline{a}^{-1}$ of $T$, for some $\overline{a}\in L_x$. Then $\overline{a}^{-1}z\overline{a}$ corresponds to some cocharacter $\alpha$ of $T.$ Let $y'\in\mathcal{A}(T)$ be the point $x+\epsilon\cdot\alpha,$ for some sufficiently small $\epsilon > 0.$ Reversing the logic from the first part of our proof, we see that $\overline{a}\cdot p$ lies in $Z_{y'}.$ Let $a$ be any lift of $\overline{a}$ to $P_x$ and take $y=a^{-1}\cdot y'.$ Then we have
\[
p\in\overline{a}^{-1}Z_{y'}=Z_{y}.
\]

It remains to see that $K_{x,r+}\subseteq K_{y,r+}\subseteq K_{x,r}.$ A short computation shows that $K_{x,r+}\subseteq K_{y',r+}\subseteq K_{x,r},$ and conjugating by $a$ gives the desired inclusions.
\end{proof}

\begin{proof}[Proof of Theorem \ref{ssvsdepth}]
    First assume that $v$ lies in $\mathfrak{k}_{y,r+}$ for some $y$. Then $\overline{v}$ must be contained in $Z_y,$ and hence by Lemma \ref{lem:ssvsdepth} lies in the unstable locus.

    Conversely, assume that $\overline{v}$ is in $(\mathfrak{k}_{x,r}/\mathfrak{k}_{x,r+})^{\on{us}}$. By Lemma \ref{lem:ssvsdepth}, there must be some $y\in\mathcal{B}(G)$ with $K_{x,r+}\subseteq K_{y,r+}\subseteq K_{x,r}$ and $\overline{v}\in Z_y$. These conditions imply that $\mathfrak{k}_{y,r+}$ coincides with the preimage of $Z_y$ under the projection $\mathfrak{k}_{x,r}\rightarrow\mathfrak{k}_{x,r}/\mathfrak{k}_{x,r+}.$ It follows that $v$ is in $\mathfrak{k}_{y,r+},$ as desired.
\end{proof}

As a quick corollary, we have the following.

\begin{proposition}\label{rationality}
Assume that $(\mathfrak{k}_{x,r}/\mathfrak{k}_{x,r+})^{\circ}$ is nonempty. Then $r$ is a rational number.
\end{proposition}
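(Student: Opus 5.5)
We argue by contraposition: assuming $r$ is irrational, we show that every point of $\mathfrak{k}_{x,r}/\mathfrak{k}_{x,r+}$ is unstable. By conjugating with $G(k((t)))$ (which preserves the Moy–Prasad filtration up to relabelling the point), we may place $x$ in the extended apartment $\widetilde{\mathcal{A}}(T)$. The computation in the proof of Lemma \ref{lem:ssvsdepth} then gives
\[
\mathfrak{k}_{x,r}/\mathfrak{k}_{x,r+}\cong\bigoplus_{\langle\alpha,x\rangle+i=r}\mathfrak{h}_\alpha^i t^i .
\]
It is therefore enough to find a cocharacter of $T\subseteq L_x$ that acts with strictly positive weight on every summand. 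Equivalently, we seek a point $y$ with $Z_y$ equal to the whole quotient, and then Lemma \ref{lem:ssvsdepth} applies directly.

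The key step is to analyse the index set $S=\{(\alpha,i): \langle\alpha,x\rangle+i=r,\ \mathfrak{h}_\alpha^i\neq 0\}$. Since $i\in\frac1n\mathbb{Z}$ is rational and $r$ is not, every $(\alpha,i)\in S$ has $\alpha\neq 0$. We now choose a displacement $y=x+\epsilon z$ along the $\mathbb{R}$-line through $x$, taking $z=x$ itself (or its image in $X_*(T)\otimes\mathbb{R}$). Then $\langle\alpha,y-x\rangle=\epsilon\langle\alpha,x\rangle=\epsilon(r-i)$, which is nonzero because $r\notin\mathbb{Q}$. Its sign depends on $i$, however, so this naive choice does not give a uniform sign.

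To fix the sign, we use the rational structure instead. Consider the $\mathbb{Q}$-linear map $\phi:\mathbb{R}\to\mathbb{Q}$ that is a projection killing $\mathbb{Q}$ and sending $r\mapsto 1$; such a map exists after choosing a $\mathbb{Q}$-basis of $\mathbb{R}$ containing $1$ and $r$. Writing $x=\sum x_j\otimes e_j$ in the rational lattice $X_*(T)\otimes\mathbb{R}$, we define $z=(\mathrm{id}\otimes\phi)(x)\in X_*(T)\otimes\mathbb{Q}$. For $(\alpha,i)\in S$ we then get $\langle\alpha,z\rangle=\phi(\langle\alpha,x\rangle)=\phi(r-i)=1>0$. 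Scaling $z$ to an integral cocharacter, it contracts every summand of the quotient, so by the Hilbert–Mumford argument (\ref{eq:hilbertmumford}) each point has $0$ in its $L_x$-orbit closure. This contradicts nonemptiness of the semistable locus.

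The main subtlety is the existence of $\phi$ and the check that $\mathrm{id}\otimes\phi$ commutes with pairing against $\alpha$. Both follow from $\mathbb{Q}$-linearity, since $\alpha$ is integral. One must also make sure the extended-apartment coordinates of $x$ are handled correctly, but central directions pair trivially with roots, so they cause no problem.
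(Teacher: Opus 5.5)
Your proof is correct. It finds the destabilizing direction by a different mechanism than the paper.

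\textbf{The paper's route.} The paper places $x$ in $\mathcal{A}(T)$ and considers the set of pairs $(y,s)$ with $\mathfrak{k}_{x,r}\subseteq\mathfrak{k}_{y,s}$. It notes that this set is cut out by finitely many rational half-spaces. Linear programming then shows that the maximal $s$ is rational, hence strictly larger than the irrational $r$. This yields a point $y$ with $\mathfrak{k}_{x,r}\subseteq\mathfrak{k}_{y,r+}$, and instability follows from Theorem \ref{ssvsdepth}. Only the easy half of Lemma \ref{lem:ssvsdepth} is really used there, and that half itself picks a lattice cocharacter in an open cone.

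\textbf{Your route.} You write the direction down explicitly. The $\mathbb{Q}$-linear functional $\phi$ with $\phi|_{\mathbb{Q}}=0$ and $\phi(r)=1$ gives $z\in X_*(T)\otimes\mathbb{Q}$ with $\langle\alpha,z\rangle=1$ for every affine root on the graded piece. The trivial direction of Hilbert--Mumford then finishes the argument.

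\textbf{Comparison.} Your argument is shorter and more elementary. It bypasses Theorem \ref{ssvsdepth}, the reduction to finitely many constraints, the boundedness check the paper leaves to the reader, and the rationality of linear-programming optima. It also shows slightly more: a single rational cocharacter of $T$ acts on all of $\mathfrak{k}_{x,r}/\mathfrak{k}_{x,r+}$ with one common positive weight. Equivalently, moving to $y=x+\epsilon z$ shifts every summand uniformly to depth $r+\epsilon$. The paper's argument is instead phrased in terms of depth. It produces a rational optimal value $s$ together with a point $y$ attaining it, which matches the surrounding Moy--Prasad discussion.

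\textbf{A minor remark.} You do not need a Hamel basis of $\mathbb{R}$. It is enough to define $\phi$ on the finite-dimensional $\mathbb{Q}$-span of $1$, $r$, and the real coordinates of $x$ with respect to a $\mathbb{Z}$-basis of $X_*(T)$.
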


\begin{proof}
We can assume without loss of generality that $x$ is in $\mathcal{A}(T)$. By Theorem \ref{ssvsdepth}, it suffices to show that assuming $r$ is irrational, then there is some $y\in\mathcal{B}(G)$ such that $\mathfrak{k}_{x,r}\subseteq\mathfrak{k}_{y,r+}.$ We will in fact produce such a $y$ in $\mathcal{A}(T)$.

Recall that an affine root space $\mathfrak{h}_{\alpha}^it^i$ lies in $\mathfrak{k}_{x,r}$ if and only if $\langle\alpha,x\rangle+i\geq r.$ Consider the set $S$ of pairs $(y,s)$ such that $\mathfrak{k}_{x,r}$ is a subset of $\mathfrak{k}_{y,s}.$ This condition is equivalent to requiring, for every affine root space $\mathfrak{h}_{\alpha}^it^i$ in $\mathfrak{k}_{x,r}$, that we have $\langle\alpha,y\rangle+i\geq s.$ Tautologically, we have $(x,r)\in S$.

A priori, $S$ is the intersection of infinitely many half-spaces in $(X_*(T/T\cap Z_H)\otimes\mathbb{R})\oplus\mathbb{R}$. However, it is clear that the inequality corresponding to an affine root space $\mathfrak{h}_{\alpha}^it^i\subseteq\mathfrak{k}_{x,r}$ implies the inequality corresponding to any affine root space $\mathfrak{h}_{\alpha}^jt^j$ with $j>i.$ It follows that $S$ can in fact be defined by only finitely many half-spaces. Thus the problem of maximizing $s$ over all $(y,s)\in S$ is a linear program with rational coefficients with respect to the lattice $X_*(T/T\cap Z_H)\oplus\mathbb{Z}$. It follows that the maximum possible value of $s$ must be rational (or that $s$ can be arbitrarily large, though this case is easy to rule out) and thus that there exists some $(y,s)\in S$ with $s > r$. But then $\mathfrak{k}_{x,r}\subseteq\mathfrak{k}_{y,s}\subseteq\mathfrak{k}_{y,r+}$, as desired.

\end{proof}

\subsection{The map $q_{x,r}$}

As $G$ is a reductive group over $k((t))$, we can form the GIT quotient $\mathfrak{g}//G$. This gives an affine $k((t))$-variety $C$, which inherits a canonical $\mathbb{G}_m$ action from the scaling action on $\mathfrak{g}$. Using Lemma \ref{twistedaffinegroup}, we can identify the variety $C$:

\begin{lemma}\label{explicitcenter}
Let $\sigma'$ be the automorphism of $\mathfrak{h}//H$ induced by $\sigma$. Then $C$ can be identified with the Galois twist of $(\mathfrak{h}//H)\times_{\Spec k}\Spec k((t))$ by the 1-cocycle which sends $a$ to the automorphism $\sigma'\times a.$

Furthermore, there is a canonical vector space structure on $\mathfrak{h}//H$, and $\sigma'$ is linear with respect to this structure. We can thus define a $k((t))$-linear automorphism $\theta'$ of $(\mathfrak{h}//H)((t^{\frac{1}{n}}))$ by
\[
\theta'(h\cdot t^{\frac{i}{n}})=\zeta^{i}\cdot\sigma'(h)\cdot t^{\frac{i}{n}},
\]
and $C((t))$ can be identified with the $\theta'$-fixed points of $(\mathfrak{h}//H)((t^{\frac{1}{n}}))$. 

All the above identifications are compatible with the aforementioned $\mathbb{G}_m$ actions.
\end{lemma}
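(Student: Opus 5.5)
The plan is to deduce everything from the descent description of $G$ in Lemma \ref{twistedaffinegroup}. Formation of GIT quotients commutes with flat base change. Hence
\[
C\times_{\Spec k((t))}\Spec k((t^{\frac{1}{n}}))\cong(\mathfrak{g}\times_{\Spec k((t))}\Spec k((t^{\frac{1}{n}})))//(G\times_{\Spec k((t))}\Spec k((t^{\frac{1}{n}}))).
\]
Via $\Xi$, the right-hand side becomes $(\mathfrak{h}//H)\times_{\Spec k}\Spec k((t^{\frac{1}{n}}))$.

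The first step is to check that the descent datum transports correctly. Since $\Xi$ intertwines $\on{id}\times a$ with $\sigma\times a$, the induced isomorphism on Lie algebras intertwines the Galois action on $\mathfrak{g}$ with $\sigma\times a$ on $\mathfrak{h}$. Both actions are compatible with the adjoint actions of the respective groups, so by functoriality of $\Spec$ of invariants they descend to an intertwining of the Galois action on the base-changed $C$ with $\sigma'\times a$. Galois descent for affine schemes along the finite Galois extension $k((t^{\frac{1}{n}}))/k((t))$ then identifies $C$ with the twist in the first claim. The scaling $\mathbb{G}_m$-action on $\mathfrak{g}$ goes over to the scaling action on $\mathfrak{h}$, because $\Xi$ and $\sigma$ act linearly, so the twist is $\mathbb{G}_m$-equivariant.

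For the vector space structure, I would invoke Chevalley restriction: $k[\mathfrak{h}]^H\cong k[\mathfrak{t}_H]^{W}$, where $\mathfrak{t}_H=\on{Lie}T_H$. Under our hypothesis that $\on{char}k$ does not divide $|W|$, this is a polynomial ring on homogeneous generators $f_1,\dots,f_\ell$ (Chevalley--Shephard--Todd, valid in good characteristic). This makes $\mathfrak{h}//H\cong\mathbb{A}^\ell$, but the coordinates are not canonical.

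This non-canonicity is the main obstacle. The canonical object is the graded vector space
\[
V=\bigl(I/I^2\bigr)^{\vee},
\]
where $I$ is the augmentation ideal of $k[\mathfrak{h}]^H$ and the grading is induced by the $\mathbb{G}_m$-action. Any choice of homogeneous generators gives an isomorphism $\mathfrak{h}//H\cong V$. To make the structure canonical, I would pin down the generators using the fixed pinning, for instance via the Kostant section $e+\mathfrak{h}^{f}$ through the principal nilpotent $e=\sum e_{\alpha,H}$. In good characteristic the restriction map $\mathfrak{h}^f\to\mathfrak{h}//H$ is an isomorphism, and $\mathfrak{h}^f$ is genuinely a vector space. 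Because $\sigma$ preserves the pinning, it fixes $e$ and commutes with the principal $\mathfrak{sl}_2$, so it preserves $\mathfrak{h}^f$ and acts linearly on it. This gives linearity of $\sigma'$. The identification is also $\mathbb{G}_m$-equivariant once the action on $\mathfrak{h}^f$ is twisted by the principal cocharacter $\rho^\vee$, which again commutes with $\sigma$.

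Finally, for the description of the $k((t))$-points, I would take $k((t))$-points of the twisted form. These are exactly the points of $(\mathfrak{h}//H)(k((t^{\frac{1}{n}})))=(\mathfrak{h}//H)((t^{\frac{1}{n}}))$ fixed by the composite of $\sigma'$ and $a$. Now $a$ sends $t^{\frac{i}{n}}$ to $\zeta^it^{\frac{i}{n}}$, and $\sigma'$ is linear, so this composite is precisely $\theta'$. The argument is the same one that gives Lemma \ref{twistedaffine} from Lemma \ref{twistedaffinegroup}.
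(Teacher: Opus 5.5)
Your proposal is essentially the paper's proof. The twist description of $C$ and the identification of $C((t))$ with the $\theta'$-fixed points are formal Galois descent, and linearity of $\sigma'$ comes from the Kostant slice $e+\mathfrak{h}^f$ through a $\sigma$-fixed principal $\mathfrak{sl}_2$-triple; your $\rho^\vee$-twist also makes explicit the $\mathbb{G}_m$-linearity that the paper only asserts.

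What you should add, as the paper does, is Kostant's conjugacy of principal $\mathfrak{sl}_2$-triples. It makes the induced linear structure on $\mathfrak{h}//H$ independent of the triple, hence canonical rather than tied to your pinning. It also handles a $\sigma$ that preserves only $T_H$ and $B_H$, which is all Lemma \ref{twistedaffinegroup} guarantees, since such a $\sigma$ carries one Kostant slice linearly onto another that induces the same structure. One caveat applies equally to you and to the paper: your claim that good characteristic suffices for the slice to be a section is false. For $G_2$ with $p=5$, which the hypotheses allow, $2\rho^\vee\equiv\alpha^\vee$ for the short simple root $\alpha$, so $f$ is a root vector and $\dim\mathfrak{h}^f=6$.
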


\begin{proof}
Most of the desired statements follow from standard Galois descent manipulations, as in Lemma \ref{twistedaffinegroup}. The only statement which is not formal is that $\sigma'$ is linear. To see this, we use the theory of the Kostant section.

For any $\mathfrak{sl}_2$-triple $\{e,h,f\}$ with $e$ a principal nilpotent, Kostant \cite{kostant} proved that the map $e+C_{\mathfrak{h}}(f)\rightarrow\mathfrak{h}//H$ is an isomorphism, where $C_{\mathfrak{h}}(f)$ is the centralizer of $f$ inside $\mathfrak{h}$. This allows us to endow $\mathfrak{h}//H$ with the vector space structure of $C_{\mathfrak{h}}(f)$. As another theorem of Kostant \cite{kostantuniqueness} states that every such $\mathfrak{sl}_2$-triple $\{e,h,f\}$ is conjugate, this vector space structure does not depend on the specific triple and is therefore canonical.

We now construct a specific such $\mathfrak{sl}_2$-triple. We can choose a nonzero element $e_{\alpha}$ in each simple root space of $H$ so that $\sigma$ preserves the collection of these elements. Let $e$ be the sum of the $e_{\alpha}$, and let $h$ be the sum of positive co-roots in $H$. Then there is a unique element $f$ such that $\{e,h,f\}$ is a $\mathfrak{sl}_2$-triple.

As $\sigma$ preserves the collection of the $e_{\alpha}$, as well as $T_H$ and $B_H$, it must preserve the elements $e$ and $h$ (and hence $f$). It clearly acts by a linear automorphism on $C_{\mathfrak{g}}(f)$, so it does on $\mathfrak{h}//H$ as well, as desired.
\end{proof}

For future reference, we note the following simple consequence of the above proof.

\begin{lemma}\label{quasisplitkostant}
There is a $\mathfrak{sl}_2$-triple $\{e_{\mathfrak{g}}, h_{\mathfrak{g}},f_{\mathfrak{g}}\}$ in $\mathfrak{g}$ such that the map $e_{\mathfrak{g}}+C_{\mathfrak{g}}(f_{\mathfrak{g}})\rightarrow C$ is an isomorphism.
\end{lemma}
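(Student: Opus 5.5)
The plan is to descend the split Kostant section from $\mathfrak{h}$ to $\mathfrak{g}$ using the $\sigma$-stable $\mathfrak{sl}_2$-triple from the proof of Lemma \ref{explicitcenter}.

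First, take the triple $\{e,h,f\}$ in $\mathfrak{h}$ built there. Each $e_\alpha$ lies in a simple root space and $\sigma$ permutes the $e_\alpha$, so $e$ is fixed by $\sigma$. The element $h$ is a sum of positive coroots, and $\sigma$ preserves $T_H$ and $B_H$, so $h$ is fixed as well. Since $f$ is uniquely determined by $e$ and $h$, it is also $\sigma$-fixed.

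Next, view $e,h,f$ as constant elements of $\mathfrak{h}((t^{\frac{1}{n}}))$, i.e.\ with $i=0$. The automorphism $\theta$ of Lemma \ref{twistedaffine} acts on them by $\sigma$, so they are $\theta$-fixed. Hence they define elements $e_{\mathfrak{g}},h_{\mathfrak{g}},f_{\mathfrak{g}}$ of $\mathfrak{g}((t))$, i.e.\ $k((t))$-points of $\mathfrak{g}$, and these still form an $\mathfrak{sl}_2$-triple. The centralizer $C_{\mathfrak{g}}(f_{\mathfrak{g}})$ is a $k((t))$-subspace of $\mathfrak{g}$. Its base change to $k((t^{\frac{1}{n}}))$ is $C_{\mathfrak{h}}(f)\otimes k((t^{\frac{1}{n}}))$, because forming a centralizer commutes with flat base change.

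Then consider the morphism $\phi: e_{\mathfrak{g}}+C_{\mathfrak{g}}(f_{\mathfrak{g}})\to C$ of $k((t))$-varieties. By Lemma \ref{explicitcenter} and the isomorphism $\Xi$ of Lemma \ref{twistedaffinegroup}, after base change to $k((t^{\frac{1}{n}}))$ this map becomes the Kostant map $(e+C_{\mathfrak{h}}(f))\otimes k((t^{\frac{1}{n}}))\to(\mathfrak{h}//H)\otimes k((t^{\frac{1}{n}}))$. That map is an isomorphism by Kostant's theorem. We need Kostant's theorem in characteristic $p$; this is permitted because $p$ does not divide the order of the Weyl group, as assumed in the conventions. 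The theorem also base-changes from $k$ to any field extension. Since being an isomorphism descends along the faithfully flat extension $k((t^{\frac{1}{n}}))/k((t))$, the map $\phi$ is an isomorphism.

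The main obstacle is compatibility. We must check that the Galois descent datum on $e+C_{\mathfrak{h}}(f)$ induced from $\mathfrak{g}$ agrees with the one on $\mathfrak{h}//H$ used in Lemma \ref{explicitcenter}, namely $\sigma'\times a$. This amounts to the equivariance of the quotient map $\mathfrak{h}\to\mathfrak{h}//H$ under $\sigma$. I would verify it directly on the one cocycle value at the generator $a$, using the fact that the triple is $\sigma$-fixed. Given that, the rest of the argument is formal descent.
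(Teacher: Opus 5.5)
Your argument is the paper's proof written out in more detail. You take the $\sigma$-fixed principal triple from the proof of Lemma~\ref{explicitcenter}, descend it through Lemma~\ref{twistedaffine}, and descend Kostant's theorem along $k((t))\subset k((t^{\frac{1}{n}}))$. The compatibility of descent data that you single out as the main obstacle is not actually needed. The map $\phi$ is already a morphism of $k((t))$-schemes. Its base change is identified with the Kostant map via $\Xi$, because $e_{\mathfrak g},h_{\mathfrak g},f_{\mathfrak g}$ correspond to $e,h,f$ by construction. Being an isomorphism can then be checked after faithfully flat base change.

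The step that fails is your justification of Kostant's theorem in characteristic $p$ by the assumption $p\nmid|W|$. For the triple with $h=2\rho^\vee$, this hypothesis is too weak. Take $H=G_2$ with simple roots $a$ (short) and $b$ (long). Then $2\rho^\vee=6a^\vee+10b^\vee$. Characteristic $5$ is allowed, since $|W|=12$, and there $h=2\rho^\vee=a^\vee$. The conditions $[h,f]=-2f$ and $[e,f]=h$ then force $f=f_a$, normalized by $[e_a,f_a]=a^\vee$. This is a short root vector, and its centralizer is $6$-dimensional. So $e+C_{\mathfrak h}(f)\cong\mathbb{A}^6$ cannot map isomorphically onto $\mathfrak h//H\cong\mathbb{A}^2$.

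The paper's one-line proof relies on the same unstated input, so this is a gap in both arguments. Closing it needs either a stronger bound on $p$ relative to the Coxeter number, or a different transversal slice. A sufficient bound is one large enough that $f$ is regular and $\mathfrak h=[e,\mathfrak h]\oplus C_{\mathfrak h}(f)$, as in Kostant's original argument.
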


\begin{proof}
Let $e,h,$ and $f$ be as in the proof of Lemma \ref{explicitcenter}. As $\sigma$ preserves $e$, we can use Lemma \ref{twistedaffine} to descend the image of $e$ in $\mathfrak{h}((t))$ to an element $e_{\mathfrak{g}}$ of $\mathfrak{g}.$ We can do the same for $h$ and $f$. The lemma now follows by descending Kostant's theorem.
\end{proof}

The identifications of Lemma \ref{explicitcenter} are not canonical, as they depend on the choices made in Lemma \ref{twistedaffinegroup}. However, one does obtain some canonical structures on $C$.

\begin{lemma}\label{canonicalstructures}
The vector space structure on $C$ produced by Lemma \ref{explicitcenter} does not depend on the choice of $(n,a,\sigma,\Xi)$.
Furthermore, the $\mathbb{Q}$-indexed grading on $C((t))$ which corresponds under Lemma \ref{explicitcenter} to the degree grading on $(\mathfrak{h}//H)((t))$ is also independent of the above choices.
\end{lemma}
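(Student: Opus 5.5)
To prove Lemma \ref{canonicalstructures}, I would compare two choices $(n,a,\sigma,\Xi)$ and $(n',a',\sigma',\Xi')$ directly. First I reduce to a common splitting field. Both data can be pulled back to $k((t^{\frac{1}{N}}))$ with $N=\on{lcm}(n,n')$. Replacing $n$ by a multiple $N$ changes neither the vector space structure nor the grading. The cocycle simply gets inflated, $\sigma$ is unchanged, and the generator $a$ of the larger Galois group is taken to restrict to the old one. The fixed points of $\theta'$ on $(\mathfrak{h}//H)((t^{\frac{1}{N}}))$ are then the same subspace as before, with the same degree grading by $\frac{1}{N}\mathbb{Z}\supseteq\frac{1}{n}\mathbb{Z}$. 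A different choice of generator $a$ only changes $\sigma$ by a power, so it is absorbed into the comparison below. So I may assume $n=n'$ and $a=a'$.

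Next I analyze the transition map. The composite $\Xi'\circ\Xi^{-1}$ is an automorphism $\phi$ of $H\times\Spec k((t^{\frac{1}{n}}))$ that intertwines $\sigma\times a$ with $\sigma'\times a$. The induced map on $(\mathfrak{h}//H)\times\Spec k((t^{\frac{1}{n}}))$ depends only on the image of $\phi$ in $\on{Out}(H)$. Inner automorphisms act trivially on the invariant quotient $\mathfrak{h}//H$. Since $H$ is split and $\on{Out}(H)$ is a constant group, this image is locally constant on $\Spec k((t^{\frac{1}{n}}))$, hence constant: it is $k$-rational and independent of $t$. It can therefore be represented by a pinned automorphism $\tau\in\on{Aut}_k(H,T_H,\Delta_H,(e_{\alpha,H}))$. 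So the transition map on $(\mathfrak{h}//H)((t^{\frac{1}{n}}))$ is $\tau'\otimes\on{id}$, where $\tau'$ is the map induced on $\mathfrak{h}//H$. This is the key step, and I expect it to be the main obstacle: one must check carefully that the map on invariant-theory quotients really factors through outer automorphisms, and that no $t$-dependence survives.

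With that in hand, both claims follow. By the argument in the proof of Lemma \ref{explicitcenter}, $\tau$ fixes a principal $\mathfrak{sl}_2$-triple $\{e,h,f\}$ built from the pinning. Alternatively, Kostant's uniqueness theorem makes the linear structure on $\mathfrak{h}//H$ canonical, hence preserved by any automorphism of $H$. Either way $\tau'$ is $k$-linear, so $\tau'\otimes\on{id}$ is $k((t))$-linear and identifies the two vector space structures on $C$. Moreover, $\tau'\otimes\on{id}$ preserves each homogeneous piece $(\mathfrak{h}//H)\,t^{i}$, so it preserves the degree grading. Restricting to $\theta'$-fixed points, the grading on $C((t))$ is well defined, with $\mathbb{Q}$-indices arising as $\bigcup_n\frac{1}{n}\mathbb{Z}$.

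Finally, I would remark on the role of the linear structure. Strictly speaking, the grading uses it, since "$h\cdot t^{i}$" needs $\mathfrak{h}//H$ to be a vector space with $k((t^{\frac{1}{n}}))$-points $(\mathfrak{h}//H)\otimes k((t^{\frac{1}{n}}))$. That is why the first claim must be proven before the second.
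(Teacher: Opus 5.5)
Your proposal is correct and follows essentially the same route as the paper. Like the paper, you pass to a common $(n,a)$, write $\Xi'\circ\Xi^{-1}$ as an inner automorphism (acting trivially on $\mathfrak{h}//H$) composed with a pinned automorphism defined over $k$, and use the Kostant-section argument from Lemma \ref{explicitcenter} to see that the latter acts $k$-linearly, preserving both the vector space structure and the degree grading. Your appeal to the constancy of $\on{Out}(H)$ over the connected base $\Spec k((t^{\frac{1}{n}}))$ is a slightly more explicit version of the paper's step "after an extension, inner times pinning-preserving, and the latter is a base change from $k$."
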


\begin{proof}
Assume that we have two different choices $(n,a,\sigma,\Xi)$ and $(n',a',\sigma',\Xi')$ as in Lemma \ref{twistedaffinegroup}. Passing to a common multiple, we can assume that $(n,a)=(n',a').$ Then $(\sigma,\Xi)$ and $(\sigma',\Xi')$ must differ by an automorphism of $H\times_{\Spec k}\Spec k((t^{\frac{1}{n}})).$ After potentially replacing $k((t{\frac{1}{n}}))$ by an extension, such an automorphism can be written as the composition of an inner automorphism and a pinning-preserving automorphism. An inner automorphism acts trivially on $(\mathfrak{h}//H)\times_{\Spec k}\Spec k((t^{\frac{1}{n}}))$, so we are reduced to the case of a pinning-preserving automorphism. Such an automorphism must be the base change to $\Spec k((t^{\frac{1}{n}}))$ of a pinning-preserving $k$-automorphism of $H$, which, as we saw in the proof of Lemma \ref{explicitcenter}, induces a linear automorphism of $\mathfrak{h}//H$. This in turn induces a linear automorphism of $(\mathfrak{h}//H)\times_{\Spec k}\Spec k((t^{\frac{1}{n}}))$ and a degree-preserving linear automorphism of $(\mathfrak{h}//H)((t))$, as desired.
\end{proof}

For $j$ a rational number, let $C((t))^j$ denote the $j$th graded component of $C((t))$ under the degree grading. There is also another natural grading on $C((t))$. Recall the canonical $\mathbb{G}_m$ action on $C$. We claim that this action is linear. Indeed, this follows from Lemma \ref{explicitcenter}, together with the fact that the canonical $\mathbb{G}_m$ action on $\mathfrak{h}//H$ is linear. We thus get a decomposition
\[
C=\displaystyle\bigoplus_{i\in\mathbb{Z}}C_i
\]
of $C$ into its weight $i$ components. As the $\mathbb{G}_m$ action is expanding, $C_i$ can only be nonzero when $i$ is positive. We note that the induced grading on $C((t))$ commutes with the degree grading, so we have
\[
C((t))=\displaystyle\bigoplus_{\substack{i\in\mathbb{Z}_+\\j\in\mathbb{Q}}}C_i((t))^j.
\]

The $\mathbb{G}_m$ action on $\mathfrak{h}//H$ induces a similar decomposition
\[
\mathfrak{h}//H=\displaystyle\bigoplus_{i\in\mathbb{Z}_+}(\mathfrak{h}//H)_i,
\]
and each $C_i$ is the appropriate Galois twist of $(\mathfrak{h}//H)_i.$ It follows that each $C_i((t))$ can be identified with the $\theta'$-fixed points of $(h//H)_i((t^{\frac{1}{n}})).$

By definition, there is a natural map $\mathfrak{g}\rightarrow C$ of $k((t))$-varieties, which induces a map $q:\mathfrak{g}((t))\rightarrow C((t))$ of ind-schemes. This map has the following nice compatibility with the Moy-Prasad filtration.

\begin{theorem}\label{depthcenter}
Choose $x\in\mathcal{B}(G)$ and a real number $r$.
\begin{enumerate}
\item The map $q$ sends $\mathfrak{k}_{x,r}$ into
\[
\displaystyle\bigoplus_{\substack{i\in\mathbb{Z}_+,j\in\mathbb{Q}\\ j\thinspace\geq\thinspace r\cdot i}}C_i((t))^j.
\]
\item The composition
\[
\mathfrak{k}_{x,r}\rightarrow\displaystyle\bigoplus_{\substack{i\in\mathbb{Z}_+,j\in\mathbb{Q}\\ j\thinspace\geq\thinspace r\cdot i}}C_i((t))^j\rightarrow\displaystyle\bigoplus_{i\in\mathbb{Z}_+}C_i((t))^{r\cdot i}
\]
factors through $\mathfrak{k}_{x,r}/\mathfrak{k}_{x,r+}.$
\end{enumerate}
\end{theorem}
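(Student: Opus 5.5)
We reduce to the case $x\in\mathcal{A}(T)$. For general $x$, write $x=g\cdot x'$ with $x'\in\mathcal{A}(T)$ and $g\in G(k((t)))$. Then $\mathfrak{k}_{x,r}=\on{Ad}(g)\mathfrak{k}_{x',r}$ and $\mathfrak{k}_{x,r+}=\on{Ad}(g)\mathfrak{k}_{x',r+}$. Since $q$ is induced from the adjoint-invariant map $\mathfrak{g}\to C$, we have $q\circ\on{Ad}(g)=q$ on $\mathfrak{g}((t))$. Both statements are therefore invariant under this conjugation. We also fix $(n,a,\sigma,\Xi)$ and pass, via Lemmas \ref{twistedaffine} and \ref{explicitcenter}, to $\mathfrak{h}((t^{\frac1n}))$ and $(\mathfrak{h}//H)((t^{\frac1n}))$. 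Here $q$ becomes the restriction to $\theta$-fixed points of the map $\mathfrak{h}((t^{\frac1n}))\to(\mathfrak{h}//H)((t^{\frac1n}))$ induced by $\mathfrak{h}\to\mathfrak{h}//H$. The grading $C_i((t))^j$ becomes the $\mathbb{G}_m$-weight $i$ part in $t$-degree $j$; this is compatible by Lemma \ref{canonicalstructures}.

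The key device is a "rescaling" torus. Choose $N$ so that $Nx\in X_*(T)$ (if $x$ is irrational, approximate $x$ by a rational point in the same facet with the same $\mathfrak{k}_{x,r},\mathfrak{k}_{x,r+}$; this is legitimate whenever only finitely many affine root hyperplanes matter, as in the proof of Proposition \ref{rationality}). After enlarging $n$ so that $x$ and $r$ have denominators dividing it, we use the formal substitution $\phi$ with $t^{\frac1n}\mapsto s\,t^{\frac1n}$ combined with $\on{Ad}(x(s))$, where $x(s)$ denotes the (fractional) cocharacter. More concretely, for $v=\sum v_{\alpha,i}t^i$ with $v_{\alpha,i}\in\mathfrak{h}_\alpha^i$, set
\[
v_s=s^{-r}\sum s^{\langle\alpha,x\rangle+i}v_{\alpha,i}t^i,
\]
defined over $k[s^{\pm1/M}]$ for a suitable $M$. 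Up to the overall scalar $s^{-r}$, $v_s$ is obtained from $v$ by an automorphism of $\mathfrak{h}((t^{\frac1n}))$: conjugation by a torus element composed with the loop rotation $t\mapsto s t$. The map $\mathfrak{h}\to\mathfrak{h}//H$ is $H$-invariant, commutes with loop rotation, and is $\mathbb{G}_m$-equivariant, with weight $i$ on the $i$-th piece. Hence for $c=q(v)$ with components $c_{i,j}$ we obtain
\[
q(v_s)=\sum_{i,j}s^{j-ri}c_{i,j}.
\]

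Now let $v\in\mathfrak{k}_{x,r}$. Every exponent $\langle\alpha,x\rangle+i-r$ appearing in $v_s$ is $\ge0$, so $v_s$ is regular at $s=0$ with $v_0=\overline v$, the image of $v$ in $\bigoplus_{\langle\alpha,x\rangle+i=r}\mathfrak{h}_\alpha^it^i$. Since $q$ is given by polynomials in finitely many coefficients, $q(v_s)$ is also regular at $s=0$. Therefore $c_{i,j}=0$ whenever $j<ri$, which proves (1). Evaluating at $s=0$ gives
\[
\sum_i c_{i,ri}=q(\overline v).
\]
This depends only on the class of $v$ modulo $\mathfrak{k}_{x,r+}$, which proves (2).

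The main obstacle is making the $s$-deformation rigorous. This requires three things. First, $v\in\mathfrak{h}((t^{1/n}))$ is an infinite series, so we must use $k[[s^{1/M}]]$-valued points and the fact that $q$ is a morphism of ind-schemes. Second, we must handle the fractional powers via a cover $s=u^M$. Third, the torus-plus-rotation automorphism must intertwine $q$ with the stated weight-scaling on $C((t))$. For the third point, we use Lemma \ref{explicitcenter} and the linearity of the $\mathbb{G}_m$-action to check it on $(\mathfrak{h}//H)((t^{1/n}))$.
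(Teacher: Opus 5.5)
Your proposal is correct for rational $r$; see the second paragraph for irrational $r$. It takes a different route from the paper.

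After the same reduction to $\mathfrak{k}^H_{y,r}\subseteq\mathfrak{h}((t^{\frac1n}))$, the paper argues in coordinates. It expands the pullback of each coordinate function $a^m_{i,j}$ on $(\mathfrak{h}//H)((t^{\frac1n}))$ as a sum of monomials $b^{m_1}_{\alpha_{l,1},j_{l,1}}\cdots b^{m_i}_{\alpha_{l,i},j_{l,i}}$ with $m_1+\cdots+m_i=m$. It uses $T_H$-invariance to get $\alpha_{l,1}+\cdots+\alpha_{l,i}=0$, and hence $\sum_z(\langle\alpha_{l,z},y\rangle+r-m_z)=ri-m$. If $m<ri$, some factor vanishes on $\mathfrak{k}^H_{y,r}$. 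If $m=ri$, every surviving monomial only sees the piece $\langle\alpha,y\rangle+m_z=r$.

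Your family $v_s$ uses the same three inputs:
\begin{enumerate}
\item weight zero, via $\on{Ad}(x(s))$-invariance;
\item additivity of $t$-degrees, via loop rotation;
\item homogeneity, via the scalar $s^{-r}$.
\end{enumerate}
These combine into the single identity $q(v_s)=\sum s^{j-ri}c_{i,j}t^j$, and regularity at $s=0$ plus specialization finishes the proof.

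Your route is more conceptual. It also gives for free that $q(v_0)=\sum_i c_{i,ri}t^{ri}$ with $v_0=f_{x,r}(\overline v)$, which is Lemma \ref{fqcompatibility}. The paper's route needs no rationality of $x$ or $r$ and no functor-of-points bookkeeping over $k[s^{1/M}]((t^{\frac1n}))$. It also gives the scheme-theoretic statement directly, as vanishing of coordinate functions.

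One omission: the theorem is stated for real $r$, but your deformation needs $r\in\mathbb{Q}$, since you choose $n$ so that $r$ has denominator dividing it. For irrational $r$, part (2) is vacuous: $j$ ranges over $\mathbb{Q}$, so $\bigoplus_i C_i((t))^{ri}=0$. For part (1), pick a rational $r'<r$ close enough to $r$ that two conditions hold:
\begin{enumerate}
\item $\mathfrak{k}_{x,r'}=\mathfrak{k}_{x,r}$, which is possible because the values $\langle\alpha,x\rangle+i$ form a discrete set;
\item $[r'i,ri)$ contains no degree $j\in\frac1n\mathbb{Z}$ for each of the finitely many $i$ with $C_i\neq 0$.
\end{enumerate}
Applying your rational argument to $r'$ then gives $c_{i,j}=0$ for all $j<ri$.
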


\begin{proof}
We will prove the theorem by reducing to the split case, using the commutative diagram
\begin{center}
\begin{tikzcd}
\mathfrak{g}((t))\arrow[r, "q"]\arrow[d] & C((t))\arrow[d]\\
\mathfrak{g}((t^{\frac{1}{n}}))\arrow[r] \arrow[d,"\cong"] & C((t^{\frac{1}{n}})) \arrow[d,"\cong"]\\
\mathfrak{h}((t^{\frac{1}{n}}))\arrow[r] & (\mathfrak{h}//H)((t^{\frac{1}{n}})).
\end{tikzcd}
\end{center}
As $\mathfrak{k}_{x,r}$ and $\mathfrak{k}_{gx,r}$ are conjugate for any $g\in G(k((t)))$, their images under $q$ agree. We can thus assume without loss of generality that $x\in\mathcal{A}_*(T)$, where $T$ is as in Section \ref{ss:building}. Set $y\in\mathcal{A}_*(T_H)$ to be the image of $x$ under the inclusion of $\mathcal{A}_*(T)$ in $\mathcal{A}_*(T_H)$.

Let $\mathfrak{k}_{y,r}^H$ denote the subspace
\[
\displaystyle\bigoplus_{\substack{\alpha\in X^*(T_H)\\ i\in\frac{1}{n}\mathbb{Z} \\ \langle\alpha,y\rangle+i\thinspace\geq\thinspace r}}\mathfrak{h}_{\alpha}t^i
\]
of $\mathfrak{h}((t^{\frac{1}{n}})).$ (We include the $\alpha\in X^*(T_H)$ subscript here because it differs from the $\alpha\in X^*(T)$ subscript for subspaces of $g((t))$.) It follows immediately from the definitions that the inclusion $\mathfrak{g}((t))\rightarrow\mathfrak{h}((t^{\frac{1}{n}}))$ sends $\mathfrak{k}_{x,r}$ into $\mathfrak{k}^H_{y,r}$. From the aforementioned commutative diagram, we see that to show part 1), it suffices to show that the arrow
\[
\mathfrak{h}((t^{\frac{1}{n}}))\rightarrow(\mathfrak{h}//H)((t^{\frac{1}{n}}))
\]
sends $\mathfrak{k}^H_{y,r}$ into
\begin{equation}\label{eq:splittarget}
\displaystyle\bigoplus_{\substack{i\in\mathbb{Z}_+,j\in\frac{1}{n}\mathbb{Z}\\ j\thinspace\geq\thinspace r\cdot i}}(\mathfrak{h}//H)_it^j.
\end{equation}

We can similarly reduce part 2) to the split case. Indeed, let $\mathfrak{k}_{y,r+}^H$ denote the subspace
\[
\displaystyle\bigoplus_{\substack{\alpha\in X^*(T_H)\\ i\in\frac{1}{n}\mathbb{Z} \\ \langle\alpha,y\rangle+i\thinspace>\thinspace r}}\mathfrak{h}_{\alpha}t^i.
\]
Note that the preimage of $\mathfrak{k}_{y,r+}^H$ along $\mathfrak{k}_{x,r}\rightarrow\mathfrak{k}_{y,r}^H$ equals $\mathfrak{k}_{x,r+}.$ Thus, part 2) of the theorem would follow if we knew that the map
\begin{equation}\label{eq:splittargetcomposition}
\mathfrak{k}^H_{y,r}\rightarrow\displaystyle\bigoplus_{\substack{i\in\mathbb{Z}_+,j\in\frac{1}{n}\mathbb{Z}\\ j\thinspace\geq\thinspace r\cdot i}}(\mathfrak{h}//H)_it^j\rightarrow\displaystyle\bigoplus_{\substack{i\in\mathbb{Z}_+,j\in\frac{1}{n}\mathbb{Z}\\ j\thinspace=\thinspace r\cdot i}}(\mathfrak{h}//H)_it^j
\end{equation}
factored through $\mathfrak{k}^H_{y,r}/\mathfrak{k}^H_{y,r+}.$

These two statements were effectively proven in Sections 4.4.2 and 4.4.3 of \cite{yangMP}, but in a noncommutative context. We will repeat the proof here for completeness.

The ind-scheme $\mathfrak{h}((t^{\frac{1}{n}}))$ is ind-affine, and thus naturally corresponds to a commutative topological algebra (see Section 19.2 of \cite{FG2}). Explicitly, if $\mathfrak{h}((t^{\frac{1}{n}}))$ is written as the direct limit $\varinjlim X_i$ of affine schemes under closed embeddings, then the corresponding commutative topological algebra would be $\varprojlim\mathcal{O}(X_i).$ In the case of $\mathfrak{h}((t^{\frac{1}{n}}))$, this construction can be described explicitly as follows. The topological vector space $\mathfrak{h}((t^{\frac{1}{n}}))$ is Tate, so there is a canonical topology on its dual $\mathfrak{h}((t^{\frac{1}{n}}))^*$. Then the commutative topological algebra associated to the ind-scheme $\mathfrak{h}((t^{\frac{1}{n}}))$ is the completion $\overline{\Sym}(\mathfrak{h}((t^{\frac{1}{n}}))^*)$ of $\Sym(\mathfrak{h}((t^{\frac{1}{n}}))^*)$ with respect to the topology induced from that of $\mathfrak{h}((t^{\frac{1}{n}}))^*$. Similarly, to the ind-scheme $(\mathfrak{h}//H)((t^{\frac{1}{n}})$ this construction associates the algebra $\overline{\Sym}((\mathfrak{h}//H)((t^{\frac{1}{n}}))^*)$.

The map $\mathfrak{h}((t^{\frac{1}{n}}))\rightarrow(\mathfrak{h}//H)((t^{\frac{1}{n}}))$ then corresponds to a map of topological algebras $\overline{\Sym}((\mathfrak{h}//H)((t^{\frac{1}{n}}))^*)\rightarrow\overline{\Sym}(\mathfrak{h}((t^{\frac{1}{n}}))^*)$. We will need an explicit description of this map. Choose a basis $a_{i,j}$ for each vector space $(\mathfrak{h}//H)_i^*$. Then for every $m\in\frac{1}{n}\mathbb{Z}$, we have an element $a_{i,j}^m\in(\mathfrak{h}//H)_i((t))^*$ defined by
\[
\langle a_{i,j}^m,ct^s\rangle=\langle a_{i,j},c\rangle\delta_{m,s}.
\]
The topological vector space $(\mathfrak{h}//H)((t^{\frac{1}{n}}))^*$ can be described as the inverse limit of the discrete vector spaces $V_o$ spanned by the $a_{i,j}^m$ with $m\geq o$, as $o$ ranges over all elements of $\frac{1}{n}\mathbb{Z}.$

We similarly choose basises $b_{\alpha,j}$ of each dual root space $\mathfrak{h}_{\alpha}^*$ and define elements $b_{\alpha,j}^m\in\mathfrak{h}((t^{\frac{1}{n}}))^*$ analogously to the $a_{i,j}^m$. Assume that the pullback of $a_{i,j}$ along $\mathfrak{h}\rightarrow\mathfrak{h}//H$ is equal to
\[
\displaystyle\sum_{l}c_{l}b_{\alpha_{l,1},j_{l,1}}b_{\alpha_{l,2},j_{l,2}}\cdots b_{\alpha_{l,i},j_{l,i}},
\]
with the $c_{l}$ elements of $k$. (We are using that the compatibility of $\mathfrak{h}\rightarrow\mathfrak{h}//H$ with the natural $\mathbb{G}_m$ actions tells us that the pullback of $a_{i,j}$ is a homogeneous polynomial of degree $i$ in the $b_{\alpha,j'}$.) Then a short computation with Laurent series shows that the image of $a_{i,j}^m$ in $\overline{\Sym}(\mathfrak{h}((t^{\frac{1}{n}}))^*)$ is equal to
\[
\displaystyle\sum_{l}c_l\sum_{\substack{m_1,m_2,\cdots, m_i\in\frac{1}{n}\mathbb{Z}\\ m_1+m_2+\cdots+m_i=m}}b_{\alpha_{l,1},j_{l,1}}^{m_1}b_{\alpha_{l,2},j_{l,2}}^{m_2}\cdots b_{\alpha_{l,i},j_{l,i}}^{m_i}.
\]

The subscheme (\ref{eq:splittarget}) of $(\mathfrak{h}//H)((t^{\frac{1}{n}}))$ is cut out by the vanishing of the $a_{i,j}^m$ with $m<r\cdot i$. So for part 1), it suffices to show that image of each such $a_{i,j}^m$ in $\overline{\Sym}(\mathfrak{h}((t^{\frac{1}{n}}))^*)$ vanishes on $\mathfrak{k}_{x,r}$. Fix such $i,j,$ and $m$, and assume as before that the pullback of $a_{i,j}$ along $\mathfrak{h}\rightarrow\mathfrak{h}//H$ is equal to
\[
\displaystyle\sum_{l}c_{l}b_{\alpha_{l,1},j_{l,1}}b_{\alpha_{l,2},j_{l,2}}\cdots b_{\alpha_{l,i},j_{l,i}}.
\]
This polynomial must be $H$-invariant, hence $T_H$-invariant, so for any $l$, we must have
\[
\alpha_{l,1}+\alpha_{l,2}+\cdots+\alpha_{l,i}=0.
\]

It follows that for any term $b_{\alpha_{l,1},j_{l,1}}^{m_1}b_{\alpha_{l,2},j_{l,2}}^{m_2}\cdots b_{\alpha_{l,i},j_{l,i}}^{m_i}$ of the image of $a_{i,j}^m$ in $\overline{\Sym}(\mathfrak{h}((t^{\frac{1}{n}}))^*)$, we have
\[
\displaystyle\sum_{z=1}^i(\langle\alpha_{l,z},y\rangle+r-m_z)=\langle 0,y\rangle+r\cdot i-m > 0.
\]
There is thus some value of $z$ for which $\langle\alpha_{l,z},y\rangle+r > m_z.$ This implies that $b_{\alpha_{l,z},j_{l,z}}^{m_z}$ acts trivially on $\mathfrak{k}_{y,r}^H$. Therefore each term of the image of $a_{i,j}^m$ vanishes on $\mathfrak{k}_{y,r}^H$, as desired.

To prove part 2), we examine the above proof a little more closely. The composition (\ref{eq:splittargetcomposition}) is determined by the pullbacks of the $a_{i,j}^m$ with $m=r\cdot i$ to $\mathfrak{k}_{y,r}^H$. Fix such $i,j,$ and $m$. 

Following the lines of the proof of part 1), we see that a term $b_{\alpha_{l,1},j_{l,1}}^{m_1}b_{\alpha_{l,2},j_{l,2}}^{m_2}\cdots b_{\alpha_{l,i},j_{l,i}}^{m_i}$ of the image of $a_{i,j}^m$ in $\Sym(\mathfrak{h}((t^{\frac{1}{n}}))^*)$ can only be nontrivial on $\mathfrak{k}_{y,r}^H$ if $\langle\alpha_{l,z},y\rangle+r-m_z\leq 0$ for all $z$. But we have
\[
\displaystyle\sum_{z=1}^i(\langle\alpha_{l,z},y\rangle+r-m_z)=\langle 0,y\rangle+r\cdot i-m = 0,
\]
so we must have $\langle\alpha_{l,z},y\rangle+r-m_z= 0$ for all $z$. But this implies that our term is pulled back from $\mathfrak{k}_{y,r}^H/\mathfrak{k}_{y,r+}^H$, as desired.
\end{proof}

Motivated by the above theorem, we define $C((t))_{\geq r}, C((t))_{>r}$ and $C((t))_{=r}$ to be 
\begin{align*}
\displaystyle\bigoplus_{\substack{i\in\mathbb{Z}_+,j\in\mathbb{Q}\\ j\thinspace\geq\thinspace r\cdot i}}&C_i((t))^j,\\
\displaystyle\bigoplus_{\substack{i\in\mathbb{Z}_+,j\in\mathbb{Q}\\ j\thinspace>\thinspace r\cdot i}}&C_i((t))^j,
\end{align*}
and
\[
\displaystyle\bigoplus_{\substack{i\in\mathbb{Z}_+,j\in\mathbb{Q}\\ j\thinspace=\thinspace r\cdot i}}C_i((t))^j,
\]
respectively. Then Theorem \ref{depthcenter} gives us a map $\mathfrak{k}_{x,r}/\mathfrak{k}_{x,r+}\rightarrow C((t))_{=r}$ which we will call $q_{x,r}$. As the map $\mathfrak{k}_{x,r}\rightarrow C((t))$ is $P_x$-invariant by definition, $q_{x,r}$ is $L_x$-invariant. It is also naturally compatible with extensions of the field $k((t))$. In particular, if $y\in\mathcal{B}(G,k((t^{\frac{1}{n}})))$ denotes the image of $x$ along the inclusion $\mathcal{B}(G)\rightarrow\mathcal{B}(G,k((t^{\frac{1}{n}})))$, then there is a commutative diagram
\begin{center}
\begin{tikzcd}
\mathfrak{k}_{x,r}/\mathfrak{k}_{x,r+}\arrow[r, "q_{x,r}"]\arrow[d] & C((t))_{=r} \arrow[d]\\
\mathfrak{k}_{y,r}^H/\mathfrak{k}_{y,r+}^H\arrow[r] & \displaystyle\bigoplus_{\substack{i\in\mathbb{Z}_+,j\in\frac{1}{n}\mathbb{Z}\\ j\thinspace=\thinspace r\cdot i}}(\mathfrak{h}//H)_it^j.
\end{tikzcd}
\end{center}

Call the bottom arrow $q_{y,r}^H.$ We can squeeze a little more juice from the methods from the proof of Theorem \ref{depthcenter} and describe $q_{y,r}^H$.

\begin{lemma}\label{qxrdescription}
The diagram
\begin{center}
\begin{tikzcd}
\mathfrak{k}_{y,r}^H/\mathfrak{k}_{y,r+}^H\arrow[r, "q_{y,r}^H"] \arrow[d] & \displaystyle\bigoplus_{\substack{i\in\mathbb{Z}_+,j\in\frac{1}{n}\mathbb{Z}\\ j\thinspace=\thinspace r\cdot i}}(\mathfrak{h}//H)_it^j\arrow[d]\\
\mathfrak{h} \arrow[r] & \mathfrak{h}//H
\end{tikzcd}
\end{center}
where the left arrow is the composition
\[
\mathfrak{k}_{y,r}^H/\mathfrak{k}_{y,r+}^H\cong\displaystyle\bigoplus_{\substack{\alpha\in X^*(T_H)\\ i\in\frac{1}{n}\mathbb{Z} \\ \langle\alpha,y\rangle+i\thinspace=\thinspace r}}\mathfrak{h}_{\alpha}t^i \xrightarrow{t\mapsto 1}\displaystyle\bigoplus_{\substack{\alpha\in X^*(T_H)\\ r-\langle\alpha,y\rangle\in\frac{1}{n}\mathbb{Z}}}\mathfrak{h}_{\alpha}\rightarrow\mathfrak{h}
\]
and the right arrow is the composition
\[
\displaystyle\bigoplus_{\substack{i\in\mathbb{Z}_+,j\in\frac{1}{n}\mathbb{Z}\\ j\thinspace=\thinspace r\cdot i}}(\mathfrak{h}//H)_it^j\cong\displaystyle\bigoplus_{\substack{i\in\mathbb{Z}_+,r\cdot i\in\frac{1}{n}\mathbb{Z}}}(\mathfrak{h}//H)_it^{r\cdot i}\xrightarrow{t\mapsto 1}\displaystyle\bigoplus_{\substack{i\in\mathbb{Z}_+,r\cdot i\in\frac{1}{n}\mathbb{Z}}}(\mathfrak{h}//H)_i\rightarrow\mathfrak{h}//H
\]
is commutative.
\end{lemma}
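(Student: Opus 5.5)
We check commutativity on coordinate functions, reusing the explicit formulas from the proof of Theorem \ref{depthcenter}. With the bases $a_{i,j}$ of $(\mathfrak{h}//H)_i^*$ and $b_{\alpha,j}$ of $\mathfrak{h}_\alpha^*$ fixed there, the target $\mathfrak{h}//H=\bigoplus_i(\mathfrak{h}//H)_i$ has coordinates $a_{i,j}$. The right arrow sends the component in $(\mathfrak{h}//H)_it^{r\cdot i}$ to $(\mathfrak{h}//H)_i$ by $t\mapsto 1$, and it sends nothing to $(\mathfrak{h}//H)_i$ when $r\cdot i\notin\frac{1}{n}\mathbb{Z}$. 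So the pullback of $a_{i,j}$ along the right arrow is the coordinate $a_{i,j}^{r\cdot i}$ if $r\cdot i\in\frac1n\mathbb{Z}$, and $0$ otherwise. Composing with $q^H_{y,r}$, the upper path pulls $a_{i,j}$ back to the restriction of the image of $a_{i,j}^{r i}$ in $\overline{\Sym}(\mathfrak{h}((t^{\frac1n}))^*)$ to $\mathfrak{k}^H_{y,r}$. This restriction factors through the quotient $\mathfrak{k}^H_{y,r}/\mathfrak{k}^H_{y,r+}$ by the argument in part 2) of Theorem \ref{depthcenter}.

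By part 2) of that proof, the only terms $c_l\,b^{m_1}_{\alpha_{l,1},j_{l,1}}\cdots b^{m_i}_{\alpha_{l,i},j_{l,i}}$ of this image that survive on $\mathfrak{k}^H_{y,r}$ are those with $m_z=r-\langle\alpha_{l,z},y\rangle$ for all $z$. Each such $m_z$ must lie in $\frac1n\mathbb{Z}$. Hence the upper pullback of $a_{i,j}$ is
\[
\sum_l c_l\prod_{z=1}^i b^{\,r-\langle\alpha_{l,z},y\rangle}_{\alpha_{l,z},j_{l,z}},
\]
where the sum runs over those $l$ with $r-\langle\alpha_{l,z},y\rangle\in\frac1n\mathbb{Z}$ for every $z$. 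The sum condition $\sum_z m_z=r i$ is automatic, because $\sum_z\alpha_{l,z}=0$ by $T_H$-invariance.

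For the lower path, the left arrow identifies $\mathfrak{h}_\alpha t^{r-\langle\alpha,y\rangle}$ with $\mathfrak{h}_\alpha$ for those $\alpha$ with $r-\langle\alpha,y\rangle\in\frac1n\mathbb{Z}$. Under this identification, $b_{\alpha,j}$ pulls back to $b_{\alpha,j}^{r-\langle\alpha,y\rangle}$ for such $\alpha$, and to $0$ for the remaining $\alpha$. The pullback of $a_{i,j}$ along $\mathfrak{h}\to\mathfrak{h}//H$ is $\sum_l c_l\prod_z b_{\alpha_{l,z},j_{l,z}}$. Pulling this back further along the left arrow therefore gives exactly the displayed sum, since the terms involving a missing $\alpha$ die. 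So the two paths pull each coordinate $a_{i,j}$ back to the same function.

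It remains to treat the case $r\cdot i\notin\frac1n\mathbb{Z}$, where the upper pullback of $a_{i,j}$ is $0$. We must show that every surviving term in the lower pullback is also $0$. If all $r-\langle\alpha_{l,z},y\rangle$ lie in $\frac1n\mathbb{Z}$, then their sum $r i-\langle 0,y\rangle=ri$ would lie in $\frac1n\mathbb{Z}$, a contradiction. So in each term some factor vanishes, and the two paths agree here too.

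The main obstacle is bookkeeping rather than ideas. One must state precisely how the $t\mapsto1$ maps act on coordinates, and make sure no cancellation or double counting arises when different affine root spaces with the same $\alpha$ collapse under $t\mapsto 1$. For fixed $\alpha$ and $y$ at most one exponent $i$ satisfies $\langle\alpha,y\rangle+i=r$, so the collapse is injective on each root space and this is fine.
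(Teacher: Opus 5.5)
Your proposal is correct and follows the paper's proof: you pull back each coordinate $a_{i,j}$ along both paths, and you use the expansion from Theorem \ref{depthcenter} to see that the only surviving terms have $m_z=r-\langle\alpha_{l,z},y\rangle$. You then dispose of the case $r\cdot i\notin\frac{1}{n}\mathbb{Z}$ using $\sum_z\alpha_{l,z}=0$; the paper does the same, and your extra remark that the $t\mapsto 1$ collapse is injective on each root space is a harmless bit of added bookkeeping.
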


\begin{proof}

We choose $a_{i,j}$ and $b_{\alpha,j}$ as in the proof of Theorem \ref{depthcenter}. As the $a_{i,j}$ form a basis for $(\mathfrak{h}//H)^*$, it suffices to show that for any $i$ and $j$, the two pullbacks of $a_{i,j}$ to $\mathfrak{k}_{y,r}^H/\mathfrak{k}_{y,r+}^H$ along the two paths in the diagram agree. Again, say that the pullback of $a_{i,j}$ along $\mathfrak{h}\rightarrow\mathfrak{h}//H$ is equal to
\[
\displaystyle\sum_{l}c_{l}b_{\alpha_{l,1},j_{l,1}}b_{\alpha_{l,2},j_{l,2}}\cdots b_{\alpha_{l,i},j_{l,i}}.
\]
Then when we further pullback this expression to $\mathfrak{k}_{y,r}^H/\mathfrak{k}_{y,r+}^H$, we get
\[
\displaystyle\sum_{\substack{l\\ \forall z, r-\langle\alpha_{l,z},y\rangle\in\frac{1}{n}\mathbb{Z}}}c_{l}b_{\alpha_{l,1},j_{l,1}}^{r-\langle\alpha_{l,1},y\rangle}b_{\alpha_{l,2},j_{l,2}}^{r-\langle\alpha_{l,2},y\rangle}\cdots b_{\alpha_{l,i},j_{l,i}}^{r-\langle\alpha_{l,i},y\rangle}.
\]
Because the sum of the $\alpha_{l,z}$ is equal to zero for any $l$, we see that for this pullback to be nonzero we must have
\[
r\cdot i=\displaystyle\sum_{z=1}^i(r-\langle\alpha_{l,z},y\rangle)\in\frac{1}{n}\mathbb{Z}.
\]

On the other hand, the pullback of $a_{i,j}$ to
\[
\displaystyle\bigoplus_{\substack{p\in\mathbb{Z}_+,q\in\frac{1}{n}\mathbb{Z}\\ q\thinspace=\thinspace r\cdot p}}(\mathfrak{h}//H)_pt^q
\]
vanishes when $r\cdot i$ is not in $\frac{1}{n}\mathbb{Z}$ and equals $a_{i,j}^{r\cdot i}$ otherwise. If $r\cdot i\not\in\frac{1}{n}\mathbb{Z}$, then both pullbacks vanish and we are done, so assume that $r\cdot i\in\frac{1}{n}\mathbb{Z}.$ The proof of Theorem \ref{depthcenter} shows that the pullback of $a_{i,j}^{r\cdot i}$ to $\mathfrak{k}_{y,r}/\mathfrak{k}_{y,r+}$ can be expressed as the sum of the terms $b_{\alpha_{l,1},j_{l,1}}^{m_1}b_{\alpha_{l,2},j_{l,2}}^{m_2}\cdots b_{\alpha_{l,i},j_{l,i}}^{m_i}$ where for each $z$, we have $m_z=r-\langle\alpha_{l,z},y\rangle.$ But this evidently agrees with the other pullback, so we are done.
\end{proof}

\subsection{GIT of $\mathfrak{k}_{x,r}/\mathfrak{k}_{x,r+}$}\label{ss:grading}

It turns out that the action of $L_x$ on $\mathfrak{k}_{x,r}/\mathfrak{k}_{x,r+}$ must be of a rather restricted form. To see this, we will use the theory of graded Lie algebras. For this subsection, we will assume that $x$ lies in $\mathcal{A}(T)$ unless otherwise stated.

The values of $x$ and $\sigma$ determine a $\mathbb{R}/\mathbb{Z}$-grading on $\mathfrak{h}$ by the formula
\[
\mathfrak{h}_j=\displaystyle\bigoplus_{\substack{\alpha\in X^*(T)\\i\in\mathbb{R}/\mathbb{Z}\\ \langle\alpha,x\rangle+i\cong j\pmod{1}}}\mathfrak{h}_{\alpha}^i.
\]
It is quickly checked that a choice of identification as in Lemma \ref{twistedaffine} determines an isomorphism between $\mathfrak{k}_{x,r}/\mathfrak{k}_{x,r+}$ and the $r$-th graded component $\mathfrak{h}_r.$ In particular, for $r=0$, we see that the Lie algebra of $L_x$ is identified with $\mathfrak{h}_0.$ Thus the action of $L_x$ on $\mathfrak{k}_{x,r}/
\mathfrak{k}_{x,r+}$ is essentially the setting studied by Vinberg \cite{Vinberg} in characteristic $0$ and by Levy \cite{Levy} in positive characteristic. We need the following strengthening of Proposition \ref{rationality} to deal with the cases not covered by Vinberg and Levy.

\begin{lemma}\label{baddenominator}
Assume that there is no exponent $e$ of $\mathfrak{h}$ such that $e\cdot r\in\frac{1}{n}\mathbb{Z}$. Then every element of $\mathfrak{h}_r$ is both $L_x$-unstable and nilpotent.
\end{lemma}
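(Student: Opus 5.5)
The plan is to use the map $q_{x,r}$ together with Lemma \ref{qxrdescription} to show that every element of $\mathfrak{h}_r$ maps to $0$ in $\mathfrak{h}//H$. That gives nilpotence at once, and we then deduce instability.

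\emph{Step 1: nilpotence.} Let $v\in\mathfrak{h}_r\cong\mathfrak{k}_{x,r}/\mathfrak{k}_{x,r+}$, and let $y$ be the image of $x$ in $\mathcal{A}(T_H)$. Consider the image of $v$ under the composite $\mathfrak{k}_{x,r}/\mathfrak{k}_{x,r+}\to\mathfrak{k}^H_{y,r}/\mathfrak{k}^H_{y,r+}\to\mathfrak{h}$. Up to the identification $t\mapsto 1$, this is the element $v$ viewed inside $\mathfrak{h}$. By Lemma \ref{qxrdescription}, its image in $\mathfrak{h}//H$ lies in the image of
\[
\bigoplus_{i\in\mathbb{Z}_+,\ r\cdot i\in\frac{1}{n}\mathbb{Z}}(\mathfrak{h}//H)_i.
\]
The weights $i$ with $(\mathfrak{h}//H)_i\neq 0$ are exactly the degrees of the basic invariants, i.e.\ the numbers $e+1$ for exponents $e$ of $\mathfrak{h}$. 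Under our characteristic assumptions (Chevalley restriction holds), this is true also when the center is positive-dimensional, in which case $1$ is included as a degree. I will read the hypothesis with the paper's convention on exponents, so that it says $r\cdot i\notin\frac1n\mathbb{Z}$ for every invariant degree $i$. Then the sum above is zero, so $v$ maps to $0\in\mathfrak{h}//H$, and $v$ is nilpotent in $\mathfrak{h}$, hence in $\mathfrak{g}$ after descent.

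If the indexing of exponents turns out to be off by one, I will handle the boundary case directly. A linear invariant of degree $1$ is a coordinate on the center, and the central directions in $\mathfrak{h}_r$ force $r\in\frac1n\mathbb{Z}$.

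\emph{Step 2: instability.} Nilpotence in $\mathfrak{h}$ does not by itself give $L_x$-instability. I see two routes.

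\textbf{Route (a), via Theorem \ref{ssvsdepth}.} It suffices to find $y'$ with $v\in\mathfrak{k}_{y',r+}$, i.e.\ to show $v$ has depth $>r$. Since $v$ is nilpotent, Kempf--Rousseau / Hilbert--Mumford for $H$ over $\overline{k((t))}$, together with tame descent (where the Weyl group order assumption is used), give a rational cocharacter $\lambda$ of $G$ under which $v$ has positive weight. In building terms, this moves $x$ to $x+\epsilon\lambda$ and strictly increases the depth of $v$.

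\textbf{Route (b), purely graded.} Use Vinberg--Levy theory: for the $\mathbb{Z}/m$-graded Lie algebra with $\mathfrak{h}_0=\on{Lie}L_x$, an element of $\mathfrak{h}_r$ is $L_x$-unstable iff it is nilpotent. In positive characteristic this is Levy's Hilbert--Mumford-for-graded-pieces result. To place ourselves in that setting, note that although $r$ may be an arbitrary real, the grading only involves finitely many values $\langle\alpha,x\rangle+i$ mod $1$. We perturb $x$ to a rational point $x'$ keeping the same $\mathfrak{h}_0$ and $\mathfrak{h}_r$, via the linear-program argument of Proposition \ref{rationality}, and then apply Levy.

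I would try Route (b) first, since it is cleaner. The main obstacle is Step 2 in exactly the cases the lemma is meant to cover, where $r$ has a bad denominator or is irrational. There one must check that Levy's nilpotent-implies-unstable statement, or the Kempf optimal-cocharacter argument, still applies: either by rationalizing $x$ as described, or by appealing to the fact that the optimal destabilizing cocharacter of a nilpotent element can be chosen normalized by the torus grading and therefore lies in $L_x$.
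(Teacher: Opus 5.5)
Your Step 1 is the paper's argument; the paper also uses ``exponent'' to mean the degree of a basic invariant. The gap is in Step 2, which you leave unresolved in exactly the cases the lemma exists for. Your preferred Route (b) fails there. Rationalizing $(x,r)$ removes irrationality, but it cannot change a denominator that the conditions defining $\mathfrak{h}_0$ and $\mathfrak{h}_r$ pin down, and Levy's results need the grading to come from an automorphism of order prime to $\on{char}k$.

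For a concrete failure, take $H$ split of type $E_8$ ($\sigma=1$, $n=1$) and $\on{char}k=11$. This is allowed, since $11\nmid|W(E_8)|$. Let $\langle\alpha_4,x\rangle=\langle\alpha_5,x\rangle=\frac{1}{11}$ and $\langle\alpha_i,x\rangle=0$ for $i\neq 4,5$ (Bourbaki labels; $\alpha_4,\alpha_5$ have coefficients $6,5$ in the highest root $\tilde\alpha$), and let $r=\frac{1}{11}$. Suppose $(x',r')$ has the same $\mathfrak{h}_0$ (which contains $\mathfrak{h}_{\tilde\alpha}$ and $\mathfrak{h}_{\alpha_i}$ for $i\neq 4,5$) and the same $\mathfrak{h}_{r'}$ (which contains $\mathfrak{h}_{\alpha_4},\mathfrak{h}_{\alpha_5}$ but not $\mathfrak{t}$). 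Then $11r'\equiv\langle\tilde\alpha,x'\rangle\equiv 0\pmod 1$ and $r'\notin\mathbb{Z}$, so the denominator $11$ is forced. Moreover, no automorphism of order prime to $11$ has fixed algebra $\mathfrak{h}_0$ and $\mathfrak{h}_r$ as an eigenspace. Such an automorphism would be $\on{Ad}(t_0)$ with $t_0\in T$, and its eigenvalue $\omega$ on $\mathfrak{h}_{\alpha_4},\mathfrak{h}_{\alpha_5}$ would satisfy $\omega^{11}=\tilde\alpha(t_0)=1$, forcing $\omega=1$. No degree of $E_8$ is divisible by $11$, so this case lies squarely inside the lemma's hypothesis.

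Route (a) is a genuinely different and viable strategy: show nilpotents have arbitrarily large depth, then apply Theorem \ref{ssvsdepth}. As written, though, it omits the key input. You need a cocharacter $\lambda$ of $G$ defined over $k((t))$ that contracts the lift $f_{x,r}(v)$. Hilbert--Mumford over $\overline{k((t))}$ does not supply one by itself: in positive characteristic $k((t))$ is imperfect, so Kempf's rationality theorem does not apply. You would have to cite something like McNinch's result on rational associated cocharacters in good characteristic. There are two further repairs. First, $x$ need not lie in the apartment of a maximal split torus containing $\lambda$, so replace $x+\epsilon\lambda$ by $y+N\lambda$, with $y$ in such an apartment and $N\gg 0$. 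Second, nilpotence of $f_{x,r}(v)$ in $\mathfrak{g}((t))$ must come from Lemma \ref{fqcompatibility}, not Lemma \ref{semisimpleagreement}, whose proof uses the present lemma.

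The paper avoids all of this with an elementary argument. By Step 1, $\mathfrak{h}_r$ lies in the nilpotent cone, which is a finite union of $H$-orbits, so some orbit $o$ meets $\mathfrak{h}_r$ in a dense open subset. For $w\in o\cap\mathfrak{h}_r$, comparing tangent spaces gives $\mathfrak{h}_r\subseteq[\mathfrak{h},w]\cap\mathfrak{h}_r=[\mathfrak{h}_0,w]$, since $[\mathfrak{h}_j,w]\subseteq\mathfrak{h}_{j+r}$. Hence $L_x\cdot w$ is open and dense in $\mathfrak{h}_r$, every $L_x$-invariant function is constant, and every element is unstable. This uses only the $\mathbb{R}/\mathbb{Z}$-grading, so irrational $r$ and bad denominators cause no trouble.
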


\begin{proof}
First we prove that every element of $\mathfrak{h}_r$ is nilpotent (as an element of $\mathfrak{h}.$) By Lemma \ref{qxrdescription}, it suffices to show that the map $q^H_{y,r}$ is identically zero. Recall that the target of $q^H_{y,r}$ is
\[
\displaystyle\bigoplus_{\substack{i\in\mathbb{Z}_+,j\in\frac{1}{n}\mathbb{Z}\\ j\thinspace=\thinspace r\cdot i}}(\mathfrak{h}//H)_it^j.
\]
As $(\mathfrak{h}//H)_i$ is nonempty only for $i$ an exponent of $\mathfrak{h}$, our assumption ensures that the target consists only of the zero point, as desired.

It remains to show that every nilpotent element $z\in\mathfrak{h}_r$ is $L_x$-unstable. As there are finitely many nilpotent orbits in $\mathfrak{h}$, there must be some nilpotent orbit $o\subseteq\mathfrak{h}$ whose intersection with $\mathfrak{h}_r$ is dense. Then for any point $p\in o\cap\mathfrak{h}_r$, the tangent space $[\mathfrak{h},p]\cap\mathfrak{h}_r$ of $o\cap\mathfrak{h}_r$ at $p$ must equal $\mathfrak{h}_r$. By the properties of the grading on $\mathfrak{h}$, we have $[\mathfrak{h},p]\cap\mathfrak{h}_r=[\mathfrak{h}_0,p]$, so it follows that we must have $[\mathfrak{h}_0,p]=\mathfrak{h}_r$ for any $p\in o\cap\mathfrak{h}_r.$ But $[\mathfrak{h}_0,p]$ is the tangent space to $L_x\cdot p$ at $p$, so $o\cap\mathfrak{h}_r$ must consist of a single $L_x$-orbit. This implies that every element of $\mathfrak{h}_r$ is $L_x$-unstable, as desired. 
 
\end{proof}

Let $\mathfrak{c}$ be a Cartan subspace of $\mathfrak{h}_r$, i.e., a maximal subspace consisting of commuting semisimple elements. The Weyl group $W_{\mathfrak{c}}$ of $\mathfrak{c}$ is the quotient $N_{L_0}(\mathfrak{c})/Z_{L_0}(\mathfrak{c}).$ Let us translate some of the results of \cite{Vinberg} and \cite{Levy} to our setting.

\begin{proposition}\label{vinbergsummary}
\begin{enumerate}
\item An element $z\in\mathfrak{h}_r$ is $L_x$-unstable iff $z$ is nilpotent.
\item An element $z\in\mathfrak{h}_r$ has a closed $L_x$-orbit iff $z$ is semisimple.
\item The group $W_{\mathfrak{c}}$ is finite.
\item The natural map of GIT quotients $\mathfrak{c}/W_{\mathfrak{c}}\rightarrow\mathfrak{h}_r//L_x$ is an isomorphism.
\item Two elements of $\mathfrak{h}_r$ have the same image in $\mathfrak{h}_r//L_x$ iff their semisimple parts lie in the same closed $L_x$-orbit.
\end{enumerate}
\end{proposition}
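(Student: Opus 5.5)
Our plan is to reduce to the finite-order graded Lie algebra setting of Vinberg \cite{Vinberg} and Levy \cite{Levy}, and to dispose of the remaining degenerate cases using Lemma \ref{baddenominator}. Having fixed $x\in\mathcal{A}(T)$, we regard the $\mathbb{R}/\mathbb{Z}$-grading on $\mathfrak{h}$ as coming from a torsion automorphism. By Proposition \ref{rationality} we may assume that $r$ is rational; otherwise $\mathfrak{h}_r$ is entirely unstable, and this case is handled below. We then take $m$ such that all the values $\langle\alpha,x\rangle+i$ with nonzero root spaces, together with $r$, lie in $\frac{1}{m}\mathbb{Z}$ modulo $1$, after possibly moving $x$ slightly within its facet. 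This does not change $\mathfrak{h}_0$ or $\mathfrak{h}_r$ as long as we only care about graded pieces that actually occur. The grading is now a $\mathbb{Z}/m$-grading, given by the automorphism $\vartheta=\sigma\circ\on{Ad}(\exp(2\pi i x))$, suitably interpreted in characteristic $p$ as $\sigma$ composed with the element of $T$ acting by $\zeta_m^{m\langle\alpha,x\rangle}$ on $\mathfrak{h}_\alpha$. Under this identification, $L_x$ is the identity component of $H^{\vartheta}$, or at least has that identity component with the same Lie algebra $\mathfrak{h}_0$. The order $m$ is prime to $p$, since it divides a multiple of $n$ times denominators coming from $x$; if necessary we choose $x$ in its facet with tame denominators. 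Our standing hypotheses (tameness and $p\nmid|W|$) are exactly Levy's hypotheses.

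In the case where some exponent $e$ satisfies $e\cdot r\in\frac{1}{n}\mathbb{Z}$, or more generally where $\mathfrak{h}_r$ contains a nonzero semisimple element, statements 1 through 5 are the main theorems of Vinberg in characteristic $0$ and of Levy (the $\theta$-group results) in good tame characteristic. These are applied to the pair $(H^{\vartheta,\circ},\mathfrak{h}_r)$, i.e. the $\vartheta$-eigenspace of eigenvalue $\zeta_m^{mr}$. Statement 3 needs one extra remark: Levy proves finiteness of the little Weyl group for $H^{\vartheta}$. Passing from $H^{\vartheta}$ to the subgroup $L_x$ only replaces $W_{\mathfrak{c}}$ by a subgroup (if $L_x$ is smaller) or a finite extension (if $L_x$ contains extra components), and both preserve finiteness and the isomorphism in statement 4. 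The reason is that the quotient by a finite group of a normal subgroup's GIT quotient is still computed by the Chevalley-type restriction.

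In the degenerate case where no exponent works, Lemma \ref{baddenominator} shows that every element of $\mathfrak{h}_r$ is nilpotent and unstable. In that case $\mathfrak{c}=0$, $W_{\mathfrak{c}}$ is trivial, and $\mathfrak{h}_r//L_x$ is a point. Statements 1, 3, 4 and 5 are then immediate, and statement 2 reduces to checking that the only closed orbit is $\{0\}$, which holds because every orbit closure contains $0$.

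The main obstacle is the bookkeeping in the first step. We must check that the grading really is induced by an automorphism of order prime to $p$, and that $L_x$ matches the group in Levy's setup closely enough that the component-group discrepancy does not affect statements 1 through 5. We would handle the component issue by noting that all five statements are insensitive to replacing the group by one with the same identity component. Instability, closedness of orbits and the semisimple-part criterion are all determined by the identity component. For the Weyl group and quotient statements we would use the comparison argument above.
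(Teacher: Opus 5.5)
Your overall plan agrees with the paper: dispose of the case with no exponent $e$ satisfying $e\cdot r\in\frac1n\mathbb{Z}$ via Lemma \ref{baddenominator}, and otherwise quote Vinberg and Levy. Your treatment of the degenerate case is fine. The gap is in the reduction to a $\theta$-group.

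\textbf{Moving $x$ within its facet.} This preserves $\mathfrak{h}_0$, hence $L_x$, but it does not preserve $\mathfrak{h}_r$, which depends on the exact values $\langle\alpha,x\rangle+i$. Take split $SL_2$ with $\langle\alpha,x\rangle=\frac12$ and $r=\frac12$; this is non-degenerate since $2r\in\mathbb{Z}$. Then $\mathfrak{h}_r=\mathfrak{h}_\alpha\oplus\mathfrak{h}_{-\alpha}$, but $\mathfrak{h}_r=0$ at every other point of the same open alcove.

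So you cannot freely pick $x$ with denominators prime to $p$. You would have to stay in the locus where exactly the same congruences $\langle\alpha,x'\rangle+i\equiv 0$ and $\equiv r \pmod 1$ hold, with no new ones. You would then need to show this locus contains a point with tame denominators. That is a lattice-torsion statement you do not supply.

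Two related points are missing. Your justification that $m$ is prime to $p$ (it ``divides a multiple of $n$ times denominators coming from $x$'') is circular. And tameness of $r$ itself has to come from $e\cdot r\in\frac1n\mathbb{Z}$ together with $p\nmid e$ (the fundamental degrees multiply to $|W|$), which you never invoke.

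Without tame $x$, your $\vartheta$ need not exist. In characteristic $p$, a finite-order automorphism has no nontrivial $p$-power roots of unity as eigenvalues. So it cannot separate graded pieces whose degrees differ by an amount with $p$-power denominator.

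\textbf{Applying Vinberg and Levy to $\mathfrak{h}_r$.} Even granting $\vartheta$, $\mathfrak{h}_r$ is its $\zeta_m^{mr}$-eigenspace, and $\zeta_m^{mr}$ is in general not a primitive $m$-th root of unity. Vinberg's and Levy's results are about the degree-one piece for a primitive root of the order of the automorphism. So they do not apply to $(H^{\vartheta,\circ},\mathfrak{h}_r)$ as you state it.

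\textbf{The missing idea, which is the paper's key step.} Pass to the graded subalgebra $\mathfrak{h}'=\mathfrak{h}_0\oplus\mathfrak{h}_r\oplus\mathfrak{h}_{2r}\oplus\cdots$. Build a connected reductive $\sigma$-stable $H'\subseteq H$ with Lie algebra $\mathfrak{h}'$, following Kawanaka's argument. Then use the automorphism $\on{Ad}\mu_x\circ\sigma$ attached to a homomorphism $\mu:\mathbb{Q}/\mathbb{Z}\to k^*$.

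This makes $\mathfrak{h}_r$ the degree-one piece and removes any need for $x$ to have tame denominators. On $\mathfrak{h}'$ the eigenvalue on $\mathfrak{h}_\alpha^i$ is $\mu(\langle\alpha,x\rangle+i)=\mu(jr)$, and $jr$ modulo $1$ has denominator prime to $p$. So $\mu$ separates the pieces $\mathfrak{h}_{jr}$ even though it kills the $p$-part of $x$. If you did have a tame $x$, you could take $\mathfrak{h}'=\mathfrak{h}^{\vartheta^b}$ with $b$ the denominator of $r$, but you still need this step.

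Finally, your component-group discussion is moot, since $L_x$ is connected and the cited results are stated for the identity component. As written, though, the passage from a group to a finite-index subgroup in statement 4 does not follow from your argument.
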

\begin{proof}

If there is no exponent $e$ of $\mathfrak{h}$ such that $e\cdot r\in\frac{1}{n}\mathbb{Z}$, then all assertions of the proposition are tautological. So we may assume that $e\cdot r\in\frac{1}{n}\mathbb{Z}$ for some exponent $e$. In particular, $r$ is rational and its denominator (in lowest terms) is not a multiple of $\on{char} k$ (by our assumptions on $G$ and $\on{char} k$). After possibly modifying our choice of $x$ (but without changing $\mathfrak{h}_r$), we may further assume that $x\in X_*(T/(T\cap Z_H))\otimes\mathbb{Q}$.

Let $d$ be the smallest multiple of $n$ such that $dr\in\mathbb{Z}$ and let $\mathfrak{h}'$ denote the Lie algebra $\mathfrak{h}_0\oplus\mathfrak{h}_r\oplus\cdots\oplus\mathfrak{h}_{(d-1)r}.$ It admits a natural $\mathbb{Z}/d\mathbb{Z}$-grading defined by $\mathfrak{h}'_{i}=\mathfrak{h}_{i\cdot r}.$ Arguing as in the proof of Theorem 2.2.9(i) of Kawanaka\cite{kawanaka}, one constructs a connected reductive subgroup $H'\subseteq H$ which induces the inclusion of Lie algebras $\mathfrak{h}'\subseteq\mathfrak{h}.$ By construction, $H'$ contains the identity component of $T^{\sigma}$ and is $\sigma$-invariant. 

Let $\zeta_d\in k^*$ be such that $\zeta_d^{d/n}=\zeta$, and choose a homomorphism $\mu:\mathbb{Q}/\mathbb{Z}\rightarrow k^*$ which sends $\frac{1}{d}$ to $\zeta_d.$ If $x=\frac{y}{d'}$ for $y\in X_*(T/(T\cap Z_H))$ and $d'\in\mathbb{Z}$, then we define $x_{\mu}\in T/T\cap Z_H$ to be the element $y(\mu(\frac{1}{d'}))$, which is independent of the choice of $d'$ and $y$. A short computation shows that $\on{ad}\mu_x\circ\sigma$ acts by multiplication with $\zeta_d^i$ on $\mathfrak{h}'_i.$ As $\on{ad}\mu_x\circ\sigma:\mathfrak{h}'\rightarrow\mathfrak{h}'$ is induced by the automorphism $\on{Ad}\mu_x\circ\sigma$ of $H$, we are in the setting of \cite{Vinberg} (in characteristic zero\footnote{Technically, \cite{Vinberg} only works over $\mathbb{C}$, instead of a general algebraically closed field of characteristic $0$. However, the proofs of the results we use from loc.cit. do not use anything specific to $\mathbb{C}$. Alternatively, the standard Lefschetz principle argument can be used to reduce this proposition to the case of $k=\mathbb{C}$.}) and \cite{Levy} (in positive characteristic).

In characteristic zero, the parts of the proposition now follow from, respectively, Proposition 1, Proposition 3, Proposition 8, Theorem 7, and Theorem 3 of \cite{Vinberg}. In positive characteristic, parts 1,2, and 5 of the proposition follow from Corollary 2.13 of \cite{Levy}, while part 3 follows from the same argument as in the proof of Proposition 8 of \cite{Vinberg}. Finally, part 4 is Theorem 2.18 of \cite{Levy}.
\end{proof}
To use this proposition, it will be helpful to translate the notion of semisimplicity of an element of $\mathfrak{h}_r$ back to a notion for $\mathfrak{k}_{x,r}/\mathfrak{k}_{x,r+}.$ For this purpose, we can identify $\mathfrak{k}_{x,r}/\mathfrak{k}_{x,r+}$ with the subspace
\[
\displaystyle\bigoplus_{\langle\alpha,x\rangle+i=r}\mathfrak{h}_{\alpha}^it^i
\]
of $\mathfrak{g}((t)).$ This gives us a map $\mathfrak{h}_r\cong\mathfrak{k}_{x,r}/\mathfrak{k}_{x,r+}\rightarrow\mathfrak{g}((t))$ that we will denote by $f_{x,r}.$ These maps are naturally compatible with the Lie bracket, i.e., for any $r_1, r_2\in\mathbb{R}$, there is a commutative diagram

\begin{center}
\begin{tikzcd}
\mathfrak{h}_{r_1}\times\mathfrak{h}_{r_2} \arrow[r, "{[-,-]}"]\arrow[d, "f_{x,r_1}\times f_{x,r_2}"] & \mathfrak{h}_{r_1+r_2}\arrow[d, "f_{x,r_1+r_2}"]\\
\mathfrak{g}((t))\times\mathfrak{g}((t))\arrow[r, "{[-,-]}"] & \mathfrak{g}((t)).
\end{tikzcd}
\end{center}

\begin{lemma}\label{semisimpleagreement}
    An element $z$ of $\mathfrak{h}_r$ is semisimple (resp. nilpotent) in $\mathfrak{h}$ if and only if $f_{x,r}(z)$ is semisimple (resp. nilpotent) in $\mathfrak{g}((t))$. More generally, if the Jordan decomposition of $z$ is $z=z_s+z_n$, then the Jordan decomposition of $f_{x,r}(z)$ is $f_{x,r}(z)=f_{x,r}(z_s)+f_{x,r}(z_n).$
\end{lemma}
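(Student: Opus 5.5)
The plan is to pass through the split group over $k((t^{\frac{1}{n}}))$. Semisimplicity and nilpotence in $\mathfrak{g}((t))$ can be tested after base change to $\mathfrak{h}((t^{\frac{1}{n}}))$: an element of $\mathfrak{g}((t))$ is semisimple (resp. nilpotent) iff its image is, because the Jordan decomposition is Galois-stable and hence descends. So it suffices to study the composite $F:\mathfrak{h}_r\to\mathfrak{h}((t^{\frac{1}{n}}))$. Note that $F$ sends a component $v_{\alpha,i}\in\mathfrak{h}_\alpha^i$ with $\langle\alpha,x\rangle+i=r$ to $v_{\alpha,i}t^i$.

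The key observation is that $F$ is a twisted version of the inclusion $\mathfrak{h}_r\subseteq\mathfrak{h}$. After enlarging the field to $k((t^{\frac{1}{d}}))$, with $x$ rational as in the proof of Proposition \ref{vinbergsummary}, write $x=y/d'$. On $\mathfrak{h}_\alpha^i t^i$ with $\langle\alpha,x\rangle+i=r$ we have $t^i=t^{r}\,t^{-\langle\alpha,x\rangle}$. Thus
\[
F(z)=t^{r}\cdot\on{Ad}(x(t)^{-1})(z),
\]
where $x(t)^{-1}$ denotes the element $y(t^{1/d'})^{-1}$ of $T_H(k((t^{\frac{1}{m}})))$ for suitable $m$. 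This element acts on $\mathfrak{h}_\alpha$ by $t^{-\langle\alpha,x\rangle}$. In other words, over $\overline{k((t))}$, $F$ is the composite of the base-change inclusion $\mathfrak{h}\hookrightarrow\mathfrak{h}\otimes\overline{k((t))}$, an inner automorphism $\on{Ad}(x(t)^{-1})$, and scaling by the unit $t^r$.

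Each of these three operations preserves Jordan decompositions. Base change preserves them because semisimplicity over a perfect ground field $k$ is stable under field extension, and nilpotence is clear. Lie algebra automorphisms preserve them. Nonzero scalar multiplication commutes with the decomposition. Hence if $z=z_s+z_n$ is the Jordan decomposition in $\mathfrak{h}$, then $F(z)=F(z_s)+F(z_n)$ with $F(z_s)$ semisimple, $F(z_n)$ nilpotent, and the two commuting. Commutation also follows directly from the bracket compatibility diagram: $[z_s,z_n]=0$ in $\mathfrak{h}_{2r}$ maps to zero. By uniqueness of the Jordan decomposition, this is the Jordan decomposition of $F(z)$. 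We should note that $z_s,z_n\in\mathfrak{h}_r$: the grading is the eigenspace decomposition of $\on{Ad}\mu_x\circ\sigma$ (up to the shift), and Jordan components of an eigenvector of a semisimple automorphism stay in the same eigenspace. Since $F(z_s)$ and $F(z_n)$ are Galois-fixed (they lie in $f_{x,r}(\mathfrak{h}_r)$), the decomposition descends to $\mathfrak{g}((t))$, giving the general statement. The equivalences follow: $z$ is semisimple iff $z_n=0$ iff $f_{x,r}(z_n)=0$ iff $f_{x,r}(z)$ is semisimple, and similarly for nilpotence, using injectivity of $f_{x,r}$.

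I expect the main obstacle to be the rational-exponent bookkeeping in the identity $F(z)=t^r\on{Ad}(x(t)^{-1})z$. This requires reducing to rational $x$ without changing $\mathfrak{h}_r$, as in Proposition \ref{vinbergsummary}, and making sense of $t^{\langle\alpha,x\rangle}$ in a Puiseux extension. If $r$ is irrational, Lemma \ref{baddenominator} already says that $\mathfrak{h}_r$ is entirely nilpotent, and the map $F$ is still $t^{r}$ times a torus conjugation formally. To avoid irrational exponents, it is easier in that case to handle nilpotence directly: $F(z)^N$ in a faithful representation lands in degrees forcing vanishing. This is the point where I would be most careful.
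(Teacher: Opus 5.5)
Your proof is correct. It rests on the same observation as the paper's: after base change, $f_{x,r}$ is $t^r$ times a torus-twisted copy of the inclusion $\mathfrak{h}_r\subseteq\mathfrak{h}$. You carry it out differently, though.

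The paper only arranges for $r$ to be an integer. It then uses the splitting $\mathfrak{g}((t))\cong\bigoplus_{i\in\mathbb{R}/\mathbb{Z}}\mathfrak{h}_i((t))$, obtained by choosing lifts of the classes in $\mathbb{R}/\mathbb{Z}$, under which $\on{ad}f_{x,r}(z)$ becomes $t^r(\on{ad}z|_{\mathfrak{h}_i}\otimes k((t)))$. The fractional parts of $\langle\alpha,x\rangle$ are absorbed into the choice of lifts, so $x$ never has to be rational. However, the splitting is only an isomorphism of vector spaces. The paper must therefore pass through the $\on{ad}$-criteria (semisimple iff $\on{ad}z$ is; nilpotent iff $\on{ad}z$ is nilpotent and $z\in[\mathfrak{h},\mathfrak{h}]$) and then assemble the Jordan decomposition from the bracket diagram.

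You realize the twist as an honest inner automorphism $\on{Ad}(x(t)^{-1})$ over a Puiseux extension. This transports the whole Jordan decomposition at once by functoriality, with no $\on{ad}$-criteria needed. The price is the reduction to rational $x$ and a separate irrational case.

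You also check that $z_s,z_n\in\mathfrak{h}_r$, which the paper uses silently. Your eigenspace argument is sound inside $\mathfrak{h}'=\on{Lie}H'$ as in Proposition \ref{vinbergsummary}, i.e.\ when some exponent $e$ has $e\cdot r\in\frac1n\mathbb{Z}$. In the remaining case Lemma \ref{baddenominator} gives $z_s=0$.

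The irrational case need not be separate. Perturb $(x,r)$ jointly to a nearby rational pair lying on exactly the same affine root hyperplanes $\langle\alpha,\cdot\rangle+i=r$. This is a rational affine condition that $(x,r)$ satisfies, so such pairs exist, and the perturbation leaves both $\mathfrak{h}_r$ and $f_{x,r}$ unchanged. Alternatively, use $q(f_{x,r}(z))=q_{x,r}(z)\in C((t))_{=r}=0$ from Lemma \ref{fqcompatibility}. Your degree-counting sketch also goes through. In a faithful representation only finitely many total weights $\beta$ can occur, and $N\mapsto Nr \bmod \frac1n\mathbb{Z}$ is injective. So at most finitely many $N$ admit a $\beta$ with $\langle\beta,x\rangle\equiv Nr$, and hence $F(z)^N=0$ for $N$ large.
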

\begin{proof}

    By Proposition \ref{baddenominator}, we can assume that $r$ is rational with denominator not a multiple of $\on{char} k$. The notions of semisimplicity, nilpotence, and Jordan decomposition are invariant under extension of base field. We can thus replace $k((t))$ with some field $k((t^\frac{1}{m}))$ containing $t^r$. Relabeling $t$ to be $t^{\frac{1}{m}}$, we can assume that $m=1$, so $r$ is an integer.
    
    The element $z$ is semisimple iff $\on{ad} z$ is a semisimple endomorphism of $\mathfrak{h}$. As $r$ is an integer, $\on{ad} z$ sends each $\mathfrak{h}_i$ to itself, and $\on{ad} z$ is semisimple iff each restriction $\on{ad} z|_{\mathfrak{h}^i}$ is semisimple.
    
    As a vector space, $\mathfrak{g}((t))$ becomes isomorphic to
    \begin{equation}\label{eq:gradedaffine}
    \displaystyle\bigoplus_{i\in\mathbb{R}/\mathbb{Z}}\mathfrak{h}_i((t))
    \end{equation}
    after choosing a lift of every $i\in\mathbb{R}/\mathbb{Z}$ to $\mathbb{R}.$ The endomorphism $\on{ad} f_{x,r}(z)$ acts on each subspace $\mathfrak{h}_i((t))$ by $t^r\cdot(\on{ad} z|_{\mathfrak{h}_i}\otimes_k k((t))).$ It is clear that $\on{ad} f_{x,r}(z)$ is semisimple iff each $\on{ad} z|_{\mathfrak{h}_i}$ is semisimple, as desired.
    
    Nilpotence of $z$ is similarly equivalent to the combination of $\on{ad} z$ being a nilpotent endomorphism and $z$ lying in $[\mathfrak{h},\mathfrak{h}].$ The same proof as with semisimplicity shows that $\on{ad} z$ is nilpotent iff $\on{ad} f_{x,r}(z)$ is nilpotent. Under the isomorphism of (\ref{eq:gradedaffine}), $[\mathfrak{g},\mathfrak{g}]((t))$ corresponds to the subspace
    \[
    \displaystyle\bigoplus_{i,j\in\mathbb{R}}[\mathfrak{h}_i,\mathfrak{h}_j]((t)).
    \]
    It follows that $z$ is in $[\mathfrak{h},\mathfrak{h}]$ iff $f_{x,r}(z)$ is in $[\mathfrak{g},\mathfrak{g}]((t)),$ so $z$ is nilpotent iff $f_{x,r}(z)$ is nilpotent.
    
    Finally, assume we have a Jordan decomposition $z=z_s+z_n$. By the previous two parts, $f_{x,r}(z_s)$ is semisimple and $f_{x,r}(z_n)$ is nilpotent. To show that this gives a Jordan decomposition of $f_{x,r}(z)$, it suffices to show that $f_{x,r}(z_s)$ and $f_{x,r}(z_n)$ commute. But we have
    \[
    [f_{x,r}(z_s),f_{x,r}(z_n)]=t^rf_{x,r}([z_s,z_n])=0.
    \]
\end{proof}

The map $f_{x,r}$ satisfies a natural compatibility with $q_{x,r}$ which will be useful later.

\begin{lemma}\label{fqcompatibility}
There is a commutative diagram

\begin{center}
\begin{tikzcd}
\mathfrak{k}_{x,r}/\mathfrak{k}_{x,r+} \arrow[r, "q_{x,r}"]\arrow[d, "f_{x,r}"] &C((t))_{=r}\arrow[d]\\
\mathfrak{g}((t))\arrow[r, "q"] & C((t)).
\end{tikzcd}
\end{center}
\end{lemma}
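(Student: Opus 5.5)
The plan is to check commutativity after the faithful base change to $k((t^{\frac{1}{n}}))$, as in the proof of Theorem \ref{depthcenter}. There we used the commutative square
\[
\mathfrak{g}((t))\rightarrow\mathfrak{h}((t^{\frac{1}{n}}))\rightarrow(\mathfrak{h}//H)((t^{\frac{1}{n}}))\leftarrow C((t)),
\]
whose right vertical map $C((t))\rightarrow(\mathfrak{h}//H)((t^{\frac{1}{n}}))$ is injective, since by Lemma \ref{explicitcenter} it identifies $C((t))$ with the $\theta'$-fixed points. It therefore suffices to prove the following: for $v\in\mathfrak{k}_{x,r}/\mathfrak{k}_{x,r+}$, the image of $f_{x,r}(v)$ under $\mathfrak{h}((t^{\frac{1}{n}}))\rightarrow(\mathfrak{h}//H)((t^{\frac{1}{n}}))$ equals the image of $q_{x,r}(v)$. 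By the commutative diagram preceding Lemma \ref{qxrdescription}, the latter is $q^H_{y,r}$ applied to the image of $v$ in $\mathfrak{k}^H_{y,r}/\mathfrak{k}^H_{y,r+}$.

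Under the inclusion $\mathfrak{g}((t))\subseteq\mathfrak{h}((t^{\frac{1}{n}}))$, the element $f_{x,r}(v)$ is the canonical lift $\tilde v$ of $v$. This lift lies in the sum $\bigoplus_{\langle\alpha,y\rangle+i=r}\mathfrak{h}_\alpha t^i$, i.e.\ in the span of the affine root spaces of $\mathfrak{k}^H_{y,r}$ of exact level $r$. So the claim reduces to a statement purely about $H$: for every $w$ in this span, the invariant polynomials evaluated on $w$ agree with $q^H_{y,r}(\bar w)$. Equivalently, $q(w)$ has no components outside $\bigoplus_{j=ri}(\mathfrak{h}//H)_it^j$, and on those components it agrees with $q^H_{y,r}$.

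I will verify this with the explicit formulas from the proof of Theorem \ref{depthcenter}. The pullback of $a^m_{i,j}$ is a sum of monomials $b^{m_1}_{\alpha_{l,1},j_{l,1}}\cdots b^{m_i}_{\alpha_{l,i},j_{l,i}}$ with $\sum_z m_z=m$ and $\sum_z\alpha_{l,z}=0$. On $w$, such a monomial is nonzero only if $m_z=r-\langle\alpha_{l,z},y\rangle$ for every $z$, which forces $m=ri$. Hence $a^m_{i,j}(q(w))=0$ for $m\neq ri$. When $m=ri$, the surviving monomials are exactly the ones used in the proof of Lemma \ref{qxrdescription} to compute $q^H_{y,r}$, so the two evaluations coincide.

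The main point requiring care is that the pullback of $a^m_{i,j}$ to $\overline{\Sym}$ is evaluated on an honest element of $\mathfrak{h}((t^{\frac{1}{n}}))$ that has only finitely many nonzero coefficients. This makes the sums finite, so the computation is legitimate rather than merely formal. The remaining point is checking that $\theta'$-equivariance identifies the result in $C((t))_{=r}\subseteq C((t))$, and this follows from the compatibility of all the maps with the Galois action.
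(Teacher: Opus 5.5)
Your proposal is correct and follows essentially the same route as the paper: you pass to $\mathfrak{h}((t^{\frac{1}{n}}))$, expand the pullback of $a^m_{i,j}$ into monomials, and use $\sum_z\alpha_{l,z}=0$ to show that every component with $m\neq r\cdot i$ vanishes on the exact-level-$r$ subspace. The one difference is that the paper gets agreement on the $m=r\cdot i$ components for free from Theorem \ref{depthcenter}, since $f_{x,r}(v)$ is a lift of $v$, whereas you re-verify it through the monomials of Lemma \ref{qxrdescription}; this is harmless but redundant.
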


\begin{proof}
By Theorem \ref{depthcenter}, it suffices to show that $q$ sends
\[
\displaystyle\bigoplus_{\langle\alpha,x\rangle+i=r}\mathfrak{h}_{\alpha}^it^i
\]
to $C((t))_{=r}$. We imitate the proof of part 1) of Theorem \ref{depthcenter}. Again, we can reduce to the split case, where we want to show that $q$ sends the subspace
\begin{equation}\label{eq:splitsource}
\displaystyle\bigoplus_{\substack{\alpha\in X^*(T_H)\\ i\in\frac{1}{n}\mathbb{Z} \\ \langle\alpha,y\rangle+i\thinspace=\thinspace r}}\mathfrak{h}_{\alpha}t^i
\end{equation}
to
\[
\displaystyle\bigoplus_{\substack{i\in\mathbb{Z}_+,j\in\frac{1}{n}\mathbb{Z}\\ j\thinspace=\thinspace r\cdot i}}(\mathfrak{h}//H)_it^j.
\]
Choose basises $a_{i,j}$ and $b_{\alpha,j}$ as in the proof of Theorem \ref{depthcenter}. We want to show that the pullbacks of the $a_{i,j}^m$ with $m\neq r\cdot i$ vanish on (\ref{eq:splitsource}). Fix $i,j,$ and $m$, and let the pullback of $a_{i,j}$ along $\mathfrak{h}\rightarrow\mathfrak{h}//H$ equal
\[
\displaystyle\sum_{l}c_{l}b_{\alpha_{l,1},j_{l,1}}b_{\alpha_{l,2},j_{l,2}}\cdots b_{\alpha_{l,i},j_{l,i}}.
\]
As in the proof of Theorem \ref{depthcenter}, every term $b_{\alpha_{l,1},j_{l,1}}^{m_1}b_{\alpha_{l,2},j_{l,2}}^{m_2}\cdots b_{\alpha_{l,i},j_{l,i}}^{m_i}$ appearing in the pullback of $a_{i,j}^m$ satisfies
\[
\displaystyle\sum_{z=1}^i(\langle\alpha_{l,z},y\rangle+r-m_z)=\langle 0,y\rangle+r\cdot i-m \neq 0.
\]
But for such a term to act nontrivially on $(\ref{eq:splitsource})$, we must have $\langle\alpha_{l,z},y\rangle+r-m_z=0$ for every $z$, a contradiction.
\end{proof}

We conclude by noting some quick consequences of the material in this section.

\begin{proposition}\label{finiteness}
The map
\[
(\mathfrak{k}_{x,r}/\mathfrak{k}_{x,r+})//L_x\rightarrow C((t))_{=r}
\]
induced by $q_{x,r}$ is finite.
\end{proposition}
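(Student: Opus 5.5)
We will prove finiteness by a graded Nakayama argument, working with $x\in\mathcal{A}(T)$ and the identification $\mathfrak{k}_{x,r}/\mathfrak{k}_{x,r+}\cong\mathfrak{h}_r$ from Section \ref{ss:grading}. Both $(\mathfrak{k}_{x,r}/\mathfrak{k}_{x,r+})//L_x$ and $C((t))_{=r}$ are affine varieties. The latter is a finite-dimensional vector space, namely $\bigoplus_{i}C_i((t))^{r i}$, each summand being a graded piece of a Laurent series space. Both carry $\mathbb{G}_m$-actions by scaling. The map $q_{x,r}$ is $\mathbb{G}_m$-equivariant if we let $\lambda$ act on $C_i((t))^{ri}$ by $\lambda^i$, since $q$ is compatible with the scaling actions and preserves the grading. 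Both coordinate rings are therefore nonnegatively graded connected $k$-algebras, and the induced map of rings is graded. For such a graded map $A\to B$ with $B$ finitely generated, finiteness is equivalent to $B/A_{+}B$ being finite-dimensional. This in turn is equivalent to the fiber of $\Spec B\to\Spec A$ over $0$ being set-theoretically the single cone point.

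The key step is therefore to show that if $\overline{v}\in\mathfrak{h}_r$ satisfies $q_{x,r}(\overline{v})=0$, then the image of $\overline{v}$ in $\mathfrak{h}_r//L_x$ is the image of $0$, i.e.\ $\overline{v}$ is $L_x$-unstable. By Lemma \ref{qxrdescription}, after passing to $k((t^{1/n}))$, the vanishing of $q_{x,r}(\overline{v})$ means that the composite to $\mathfrak{h}//H$ vanishes. More precisely, the image of $\overline{v}$ in $\mathfrak{h}$ (via $t\mapsto 1$) maps to $0$ in $\mathfrak{h}//H$. Here we should check that the right vertical arrow in Lemma \ref{qxrdescription} is injective, which holds because distinct $i$ give distinct graded summands $(\mathfrak{h}//H)_i$. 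Hence the corresponding element of $\mathfrak{h}$ is nilpotent. One needs to match this element with $z\in\mathfrak{h}_r$; I expect that $t\mapsto1$ composed with the inclusion is exactly the embedding $\mathfrak{h}_r\subseteq\mathfrak{h}$ under the identifications used. Alternatively, one can use Lemma \ref{fqcompatibility} and Lemma \ref{semisimpleagreement}: $q(f_{x,r}(z))=0$ forces $f_{x,r}(z)$ to be nilpotent in $\mathfrak{g}((t))$, since the fiber of $\mathfrak{g}\to C$ over $0$ is the nilpotent cone, and hence $z$ is nilpotent in $\mathfrak{h}$. Then Proposition \ref{vinbergsummary}(1) gives that $z$ is $L_x$-unstable. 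In the degenerate case with no exponent $e$ satisfying $er\in\frac1n\mathbb{Z}$, Lemma \ref{baddenominator} shows that everything is unstable, so $\mathfrak{h}_r//L_x$ is a point and the claim is trivial.

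Finally, we conclude formally. Let $A=\mathcal{O}(C((t))_{=r})$ and $B=\mathcal{O}(\mathfrak{h}_r)^{L_x}$, which is finitely generated. The zero set of $A_+B$ in $\Spec B$ is the single point $0$. Hence $\sqrt{A_+B}=B_+$, so $B/A_+B$ is finite-dimensional, and by graded Nakayama $B$ is a finite $A$-module. The main obstacle is the careful identification in the key step: verifying that the vanishing of $q_{x,r}$ really detects nilpotence. This requires that the fiber of $\mathfrak{g}\to C$ over $0$ is the nilpotent cone, which holds after base change to $\overline{k((t))}$ by the classical result for $\mathfrak{h}\to\mathfrak{h}//H$ under our characteristic assumptions.
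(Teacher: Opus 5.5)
Your proof is correct, but it runs in the opposite logical direction from the paper's.

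\textbf{The paper's route.} It proves finiteness directly from restriction theorems. Lemma \ref{qxrdescription} and the closed embeddings of $C((t))_{=r}$ into $\mathfrak{h}//H$ reduce the claim to finiteness of $\mathfrak{h}_r//L_x\rightarrow\mathfrak{h}//H$. The paper then extends a Cartan subspace $\mathfrak{c}\subseteq\mathfrak{h}_r$ to a Cartan subalgebra of $\mathfrak{h}$. Two maps are finite: $\mathfrak{c}\rightarrow\mathfrak{c}/W_{\mathfrak{c}}\cong\mathfrak{h}_r//L_x$, by Proposition \ref{vinbergsummary}(4) (the Vinberg--Levy restriction theorem), and the classical Chevalley map for $\mathfrak{h}$. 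Finiteness of the bottom arrow then follows formally. Only afterwards does the paper deduce Corollary \ref{unstablevsfiber}, that the null fiber is a point, from finiteness plus $\mathbb{G}_m$-invariance.

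\textbf{Your route.} You prove the null-fiber statement first and then upgrade it to finiteness. You use Proposition \ref{vinbergsummary}(1) (unstable iff nilpotent), together with the fact that the vanishing of $q_{x,r}$ forces nilpotence. This is exactly how the paper itself uses Lemma \ref{qxrdescription} in the proof of Lemma \ref{baddenominator}, so the identification you hedge on is the one the paper relies on. Your alternative via Lemmas \ref{fqcompatibility} and \ref{semisimpleagreement} also works. The upgrade to finiteness is the cone criterion for graded maps of connected graded algebras, via graded Nakayama. Your checks of the $\mathbb{G}_m$-equivariance, the positivity of the grading on $\mathcal{O}(C((t))_{=r})$, and the surjectivity of the quotient map on $k$-points are all sound. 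The injectivity of the right vertical arrow that you flag is not actually needed; only commutativity and linearity are used.

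\textbf{Comparison.} Your approach avoids the restriction isomorphism and the choice of Cartan subspaces, and it proves Corollary \ref{unstablevsfiber} along the way. It costs a standard commutative-algebra step and the identification of the zero fiber of $\mathfrak{g}\rightarrow C$ with the nilpotent cone. The paper's argument is shorter once part (4) of Proposition \ref{vinbergsummary} is available. Both ultimately rest on Vinberg and Levy, just on different parts of their results.
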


\begin{proof}
We will use the notation from Lemma \ref{qxrdescription} and its preceding discussion. It suffices to show that the composition
\begin{equation}\label{eq:compositionfinite}
(\mathfrak{k}_{x,r}/\mathfrak{k}_{x,r+})//L_x\rightarrow\displaystyle\bigoplus_{i\in\mathbb{Z}_+}C_i((t))^{r\cdot i}\rightarrow\displaystyle\bigoplus_{\substack{i\in\mathbb{Z}_+,j\in\frac{1}{n}\mathbb{Z}\\ j\thinspace=\thinspace r\cdot i}}(\mathfrak{h}//H)_it^j\rightarrow\mathfrak{h}//H
\end{equation}
is finite, as the final two arrows in (\ref{eq:compositionfinite}) are closed embeddings. Lemma \ref{qxrdescription} identifies this composition with the natural map
\[
\mathfrak{h}_r//L_x\rightarrow\mathfrak{h}//H.
\]

Choose an extension of $\mathfrak{c}$ to a Cartan subspace $\mathfrak{c_h}$ of $\mathfrak{h}$. Then the above map fits into the commutative diagram
\begin{center}
\begin{tikzcd}
\mathfrak{c_h} \arrow[r]\arrow[d] & \mathfrak{c}\arrow[d]\\
\mathfrak{h}_r//L_x\arrow[r] & \mathfrak{h}//H.
\end{tikzcd}
\end{center}
The vertical arrows are integral morphisms and the top arrow is a closed embedding. It now follows formally that the bottom arrow is finite, as desired.
\end{proof}
This proposition has an important consequences, which gives another perspective on the depth $r$ locus.
\begin{corollary}\label{unstablevsfiber}
The unstable locus $(\mathfrak{k}_{x,r+}^{\perp}/\mathfrak{k}_{x,r}^{\perp})^{\on{us}}$ can be set-theoretically identified with the fiber $q_{x,r}^{-1}(0).$
\end{corollary}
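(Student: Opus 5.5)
The plan is to first make sense of the statement on the dual side and then reduce it to a statement about a single graded piece, where it follows from the finiteness in Proposition \ref{finiteness} and a conical (contracting $\mathbb{G}_m$) argument.

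\textbf{Step 1: identify the dual quotient.} Fix a nondegenerate $G$-invariant symmetric bilinear form on $\mathfrak{g}$. Our assumptions on $\on{char} k$ make it possible to choose one that is nondegenerate, and it can be taken to descend from a $\sigma$-invariant form on $\mathfrak{h}$ via Lemma \ref{twistedaffine}. Let $\mathfrak{k}^{\perp}$ denote the annihilator with respect to the residue pairing $\langle X,Y\rangle=\on{Res}(X,Y)\,\frac{dt}{t}$.

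For $x\in\mathcal{A}(T)$ the pairing matches $\mathfrak{h}_{\alpha}^it^i$ perfectly with $\mathfrak{h}_{-\alpha}^{-i}t^{-i}$, and these vanishing conditions are exactly the ones defining $\mathfrak{k}_{x,s}$ with the signs flipped. A direct check on affine root spaces therefore gives
\[
\mathfrak{k}_{x,r+}^{\perp}=\mathfrak{k}_{x,-r},\qquad \mathfrak{k}_{x,r}^{\perp}=\mathfrak{k}_{x,(-r)+}.
\]
For general $x$ the same identities follow by $G(k((t)))$-conjugation, since the pairing is invariant. Consequently $\mathfrak{k}_{x,r+}^{\perp}/\mathfrak{k}_{x,r}^{\perp}\cong\mathfrak{k}_{x,-r}/\mathfrak{k}_{x,-r+}$ as $L_x$-representations. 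The fiber in the statement is understood as the fiber of the map $q_{x,-r}$ on this quotient provided by Theorem \ref{depthcenter}, which is the map $q$ restricted to this dual quotient. This bookkeeping is the step I expect to need the most care: nondegeneracy of the form in small characteristic, and checking that the perp identities hold on the nose rather than up to a shift. Once it is done, it suffices to prove, for an arbitrary real $s$, that the unstable locus of $V=\mathfrak{k}_{x,s}/\mathfrak{k}_{x,s+}$ equals $q_{x,s}^{-1}(0)$ set-theoretically.

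\textbf{Step 2: unstable points lie in the fiber.} Let $v\in V$ be unstable. Then $0$ lies in $\overline{L_x\cdot v}$. Since $q_{x,s}$ is $L_x$-invariant and continuous, it is constant on orbit closures, so $q_{x,s}(v)=q_{x,s}(0)=0$.

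\textbf{Step 3: points in the fiber are unstable.} Let $\pi:V\to V//L_x$ be the quotient map and $\bar q:V//L_x\to C((t))_{=s}$ the map induced by $q_{x,s}$, which is finite by Proposition \ref{finiteness}.

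Both sides carry compatible $\mathbb{G}_m$-actions. Scaling on $V$ induces an action on $V//L_x$ with positively graded coordinate ring, whose only fixed point is $\pi(0)$. On $C((t))_{=s}$, the summand $C_i((t))^{s i}$ has weight $i>0$. The map $q_{x,s}$ is equivariant for these actions, because $q$ is compatible with scaling.

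Hence $\bar q^{-1}(0)$ is a finite, $\mathbb{G}_m$-stable set. A connected group acting on a finite set acts trivially, so every point of $\bar q^{-1}(0)$ is $\mathbb{G}_m$-fixed, and the only fixed point is $\pi(0)$. Therefore $\bar q^{-1}(0)=\{\pi(0)\}$.

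It follows that $q_{x,s}^{-1}(0)=\pi^{-1}(\pi(0))$. By standard GIT, $\pi^{-1}(\pi(0))$ is the set of $v$ whose orbit closure meets the unique closed orbit $\{0\}$ in that fiber, which is exactly the unstable locus. Applying this with $s=-r$ and transporting back along Step 1 yields the corollary.
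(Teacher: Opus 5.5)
Your Steps 2--3 are the paper's argument: the GIT fact that the unstable locus is $\pi^{-1}(\pi(0))$, combined with finiteness of $V/\!/L_x\to C((t))_{=s}$ from Proposition \ref{finiteness} and $\mathbb{G}_m$-invariance forcing the fiber over $0$ to be $\{\pi(0)\}$, so the proposal is correct. Step 1 (the residue-pairing identification, whose perp identities are right with the $\frac{dt}{t}$ normalization) is an unnecessary detour: the $\perp$ notation in the statement is a slip, and both the paper's proof and its later use $(\mathfrak{k}_{x,r}/\mathfrak{k}_{x,r+})^{\geq\diamond}=(\mathfrak{k}_{x,r}/\mathfrak{k}_{x,r+})^{\on{us}}$ concern $\mathfrak{k}_{x,r}/\mathfrak{k}_{x,r+}$ itself, which is your case $s=r$.
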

\begin{proof}
It a result of GIT that the unstable locus coincides with the set-theoretic fiber over $0$ of
\[
\mathfrak{k}_{x,r+}^{\perp}/\mathfrak{k}_{x,r}^{\perp}\rightarrow(\mathfrak{k}_{x,r+}^{\perp}/\mathfrak{k}_{x,r}^{\perp})//L_x.
\]
It thus suffices to show that the set-theoretic fiber over $0$ of
\[
(\mathfrak{k}_{x,r}/\mathfrak{k}_{x,r+})//L_x\rightarrow C((t))_{=r}
\]
consists only of the point $0$.

By Proposition \ref{finiteness}, the set-theoretic fiber must consist of a finite set of points. However, the fiber must also be $\mathbb{G}_m$-invariant, and hence can only contain $0$.
\end{proof}
\subsection{Stratification by twisted Levi subgroups}

We now move to the primary goal of this chapter, which is to relate each semistable $L_x$-orbit in $\mathfrak{k}_{x,r}/\mathfrak{k}_{x,r+}$ to a corresponding orbit for some twisted Levi subgroup. Our starting point is a simple observation. 

\begin{lemma}
The $k((t))$-points of $C$ are in bijection with the conjugacy classes of semisimple elements of $\mathfrak{g}$.
\end{lemma}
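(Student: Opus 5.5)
The plan is to build the bijection through the Kostant section (Lemma \ref{quasisplitkostant}) and the standard fact that $G(\overline{k((t))})$-conjugacy of semisimple elements is detected by the invariant map $q:\mathfrak{g}\rightarrow C$. Write $F=k((t))$. Here ``conjugacy classes of semisimple elements of $\mathfrak{g}$'' should be read as semisimple elements of $\mathfrak{g}(F)$ up to $G(F)$-conjugacy. The map in question sends a semisimple $s\in\mathfrak{g}(F)$ to $q(s)\in C(F)$; since $q$ is $G$-invariant, this is well defined on conjugacy classes.

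\emph{Surjectivity.} Take $c\in C(F)$. By Lemma \ref{quasisplitkostant} there is a unique $v\in e_{\mathfrak{g}}+C_{\mathfrak{g}}(f_{\mathfrak{g}})$ with $q(v)=c$, and $v$ is $F$-rational. Let $v=v_s+v_n$ be its Jordan decomposition. Because $F$ is perfect only when $\on{char}k=0$, I would check rationality of $v_s$ separately. In positive characteristic, the Jordan decomposition of an $F$-rational element is rational whenever the characteristic polynomial of $\on{ad}v$ is separable enough. More robustly, the centralizer of $v_s$ is a reductive group whose Weyl group has order prime to $p$ by our assumptions, and I expect that this tameness hypothesis gives rationality of $v_s$. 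Granting that, $q(v_s)=q(v)=c$, since the invariants of $v$ and of its semisimple part agree: $v_s$ lies in the closure of the orbit of $v$.

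\emph{Injectivity.} Suppose $s,s'\in\mathfrak{g}(F)$ are semisimple with $q(s)=q(s')$. Over $\overline{F}$, semisimple elements with equal invariants are conjugate by Chevalley restriction, since closed orbits are separated by invariants. So $s'=\on{Ad}(g)s$ for some $g\in G(\overline{F})$, and the set of such $g$ is a torsor under the centralizer $Z_G(s)$. The obstruction to finding a rational conjugator is therefore a class in $H^1(F,Z_G(s))$. Since $s$ is semisimple, $Z_G(s)$ is a twisted Levi subgroup of $G$ (connected reductive under our characteristic assumptions). By Steinberg's theorem \cite{steinberg}, $H^1(F,\cdot)$ vanishes for connected reductive groups over $F$, which has cohomological dimension $\le 1$ because $k$ is algebraically closed. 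Hence the torsor has an $F$-point, and $s,s'$ are $G(F)$-conjugate.

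The main obstacle is making the two characteristic-sensitive points rigorous: connectedness of $Z_G(s)$, and rationality of the semisimple part over an imperfect field. For connectedness I would cite the standard result that centralizers of semisimple elements of the Lie algebra are connected for good characteristic; pinning down the precise hypothesis may be fiddly. For rationality, the first approach I would try is to avoid the issue entirely. One can instead take $v$ regular, as the Kostant section provides, and observe that $v_s$ is Galois-invariant by uniqueness of the Jordan decomposition over $\overline{F}$. It then descends, since $F^{\mathrm{sep}}$-rationality suffices once we know $v_s$ is defined over a separable extension. That last fact holds because the tame splitting field makes the relevant torus split over $F^{\mathrm{sep}}$.
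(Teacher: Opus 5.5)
Your argument is the same as the paper's. Surjectivity goes through the Kostant section (Lemma \ref{quasisplitkostant}) followed by passing to the semisimple part. Injectivity goes through conjugacy over $\overline{F}$ together with Steinberg's vanishing of $H^1(F,Z_G(s))$; the paper phrases this via the kernel of $H^1(F,Z_G(a))\to H^1(F,G)$ and you via the class of the transporter torsor, which amounts to the same thing. The paper takes the semisimple part of the Kostant-section element without comment, so it never addresses the rationality issue you raise. However, your proposed resolution does not work as written, so your surjectivity step still rests on an unproved claim.

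Invariance of $v_s$ under $\mathrm{Aut}(\overline{F}/F)$ only places $v_s$ in $\mathfrak{g}(F^{\mathrm{perf}})$. You rightly note that you need $v_s\in\mathfrak{g}(F^{\mathrm{sep}})$, but the reason you give (the tame splitting field making ``the relevant torus'' split over $F^{\mathrm{sep}}$) is not an argument. Every $F$-torus splits over $F^{\mathrm{sep}}$, tame or not. A maximal torus whose Lie algebra contains $v_s$ is a priori only available over $\overline{F}$, so saying it is defined over $F^{\mathrm{sep}}$ essentially restates the claim. Regularity of $v$ plays no role either.

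What you actually need is that the eigenvalues of $\rho(v)$, for a faithful representation $\rho$ defined over $F$, lie in $F^{\mathrm{sep}}$. Then $\rho(v)_s$ is a polynomial in $\rho(v)$ with coefficients in $F^{\mathrm{sep}}$. It is Galois-fixed by uniqueness of the Jordan decomposition, hence $F$-rational, and it equals $d\rho(v_s)$.

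This is exactly where the hypothesis $p\nmid|W|$ enters. Over $F^{\mathrm{sep}}$ the group splits with maximal torus $T_H$, and any $y\in\mathfrak{t}_H(\overline{F})$ lying over $c$ is already $F^{\mathrm{sep}}$-rational. To see this, note that the fibre over $c$ of the finite flat degree-$|W|$ map $\mathfrak{t}_H\to\mathfrak{t}_H/W$ has length $|W_y|$ at $y$, which is prime to $p$. On the other hand, the residue field at the corresponding closed point of this $F^{\mathrm{sep}}$-scheme is purely inseparable of some degree $p^m$, and $p^m$ divides that length, so $m=0$. Since $v_s$ is $G(\overline{F})$-conjugate to such a $y$, the eigenvalues of $\rho(v)$ are the values $d\lambda(y)\in F^{\mathrm{sep}}$ of the weights of $\rho$. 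Without this hypothesis the claim genuinely fails: over $k((t))$ with $\on{char}k=p$, the companion matrix of $X^p-t$ in $\mathfrak{gl}_p$ has semisimple part $t^{1/p}\cdot I$.

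Similarly, for connectedness of $Z_G(s)$, good characteristic is not the right hypothesis. In $PGL_2$ in characteristic $2$ (a good prime for type $A_1$), the image of $\mathrm{diag}(1,0)$ is semisimple, and its centralizer is the full normalizer of the diagonal torus. What makes connectedness hold is again $p\nmid|W|$, which forces $W_y$ to be generated by the reflections $s_\alpha$ with $d\alpha(y)=0$.
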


\begin{proof}
Recall that we have a tautological map $\mathfrak{g}\rightarrow C$ of $k((t))$-varieties. By definition, this map sends conjugate elements to the same point. It suffices to show that the set of semisimple elements in each fiber forms a single conjugacy class.

Fix a $k((t))$-point $c\in C.$ First we show that there is at least one semisimple element of $\mathfrak{g}$ which maps to $c.$ As Lemma \ref{quasisplitkostant} tells us that there is a section $C\rightarrow\mathfrak{g}$, some $g\in\mathfrak{g}$ maps to $c$. Then if we take a Jordan decomposition of $g$, the semisimple part $g_{\on{ss}}$ also maps to $c$, as desired.

It is well known that two semisimple elements of $\mathfrak{g}\otimes_{k((t))}\overline{k((t))}$ are conjugate if and only if they map to the same point of $C\times_{\Spec k((t))}\Spec\overline{k((t))}.$ It follows that all the semisimple elements of the fiber over $c$ are stably conjugate, i.e., become conjugate over some extension of $k((t))$. Choose a semisimple point $a$ of the fiber over $c$. The set of conjugacy classes in the stable conjugacy class of $a$ can be put in bijection with (see e.g. Section 3 of \cite{Kottwitz}\footnote{Technically, \cite{Kottwitz} studies stable conjugacy on elements of $G$ instead of elements of $\mathfrak{g}.$ However, the proofs also apply to the case of $\mathfrak{g}$ (and in fact simplify, as the centralizer of a semisimple element of $\mathfrak{g}$ is always connected).}) the kernel of the map of Galois cohomology sets \[
H^1(k((t)),Z_G(a))\rightarrow H^1(k((t)),G),
\]
where $Z_G(a)$ is the centralizer of $a$. By Steinberg's theorem \cite{steinberg}, both cohomology sets vanish, so every element stably conjugate to $a$ is in fact conjugate, which gives the desired result.
\end{proof}

In particular, for every $k((t))$-point $c$ of $C$, any two semisimple elements in the fiber over $c$ have conjugate centralizers. As the centralizer of a semisimple element of $\mathfrak{g}\otimes_{k((t))}\overline{k((t))}$ is a Levi subgroup, the centralizer of a semisimple element of $\mathfrak{g}$ is a twisted Levi subgroup. For any nonzero point $p\in C$, let $Z_p\in\TL(G)^{\diamond}$ denote the conjugacy class of the centralizer of a semisimple element mapping to $p$. We will also set $Z_0={\diamond}\in\TL(G)^{\diamond}.$

\begin{lemma}\label{semicontinuity}
Let $G_0$ be an element of $\TL(G)^{\diamond}$, and let $X$ denote the image of $q_{x,r}$, which will be a closed subvariety of $C((t))_{=r}.$ Then the set of points $p\in X$ such that $Z_p\geq G_0$ forms a Zariski-closed subset of $X$. Furthermore, the set of points $p\in X$ where $Z_p=G_0$ is open in this subset.
\end{lemma}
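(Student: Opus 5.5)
We prove the lemma by describing $X$ as a finite image of an affine space with a finite group action and pulling the centralizer type back to that space. The plan is to transport the question to a Cartan subspace. By Proposition \ref{finiteness} and Proposition \ref{vinbergsummary}(4), the map $\mathfrak{c}\rightarrow\mathfrak{c}/W_{\mathfrak{c}}\cong(\mathfrak{k}_{x,r}/\mathfrak{k}_{x,r+})//L_x\rightarrow C((t))_{=r}$ is finite, hence surjects onto the closed subvariety $X$. Since images of closed sets under finite maps are closed, it suffices to show two things. First, the preimage in $\mathfrak{c}$ of $\{Z_p\geq G_0\}$ is closed. Second, the preimage of $\{Z_p=G_0\}$ is open in it, and its image is still open in the image of the closed set.

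The key is to compute $Z_{q(f_{x,r}(c))}$ for $c\in\mathfrak{c}$. By Lemma \ref{semisimpleagreement}, $f_{x,r}(c)$ is semisimple in $\mathfrak{g}((t))$. By Lemma \ref{fqcompatibility}, it maps to $q_{x,r}(c)$. Hence $Z_{q_{x,r}(c)}$ is the conjugacy class of the centralizer of $f_{x,r}(c)$, or $\diamond$ if $c=0$. After passing to $k((t^{1/m}))$ with $t^r$ available, as in the proof of Lemma \ref{semisimpleagreement}, the element $f_{x,r}(c)$ becomes $t^r c$ inside a Cartan of $\mathfrak{h}((t^{1/m}))$ containing $\mathfrak{c}_{\mathfrak h}$. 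Its geometric centralizer is therefore the Levi determined by the set of roots $\alpha$ of $\mathfrak{c}_{\mathfrak h}$ with $\alpha(c)=0$.

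So $\mathfrak{c}$ is stratified by the finite arrangement of subspaces $\mathfrak{c}_S=\{c:\alpha(c)=0\ \forall\alpha\in S\}$. The rational conjugacy class of the centralizer is constant on each open stratum $\mathfrak{c}_S^\circ$. It is a twisted Levi descended via the same Galois twist, so constancy holds because the Galois action is determined by $S$ and the fixed data $(x,\sigma)$. Moreover, if $c'\in\overline{\mathfrak{c}_S^\circ}$ then $Z_{c'}\supseteq Z_c$, so $Z$ is monotone under specialization. Hence $\{c:Z\geq G_0\}$ is a union of closed strata $\mathfrak{c}_S$, together with $0$ when $G_0=\diamond$, and is therefore closed. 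The locus $\{Z=G_0\}$ inside it is obtained by removing the smaller closed strata, so it is open.

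To descend openness to $X$, use the fact that $\{Z_p>G_0\}$ is itself a finite union of sets of the form $\{Z_p\geq G_1\}$ with $G_1>G_0$. Each of these is closed by the first part, so $\{Z_p=G_0\}=\{Z_p\geq G_0\}\setminus\bigcup_{G_1>G_0}\{Z_p\geq G_1\}$ is open in $\{Z_p\geq G_0\}$. This sidesteps pushing open sets forward.

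The main obstacle is the rationality step. We need that the \emph{rational} conjugacy class, not merely the geometric Levi type, is constant on strata and monotone under specialization. Knowing that the centralizers contain $Z_G(f_{x,r}(c))\supseteq Z_G(f_{x,r}(c'))$ only up to conjugacy requires care. I would handle it by noting that the twisted Levis $Z_G(t^r c)$ for $c\in\mathfrak{c}$ all contain a common maximal torus, namely the descent of the centralizer of $\mathfrak{c}_{\mathfrak h}$. For $c'$ in the closure of the stratum of $c$, the inclusion $Z_G(f_{x,r}(c))\subseteq Z_G(f_{x,r}(c'))$ then holds literally as $k((t))$-subgroups.
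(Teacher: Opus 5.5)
Your proposal is correct and follows the paper's argument: pass to the finite surjection $\mathfrak{c}\rightarrow X$, show that the literal subgroup $Z_G(f_{x,r}(c))$ is upper semicontinuous in $c$ and takes finitely many values, and then get closedness of $\{Z_p\geq G_0\}$ and relative openness of $\{Z_p=G_0\}$ by removing the finitely many closed loci $\{Z_p\geq G_1\}$ with $G_1>G_0$. The differences are cosmetic. You obtain the semicontinuity from the root-hyperplane arrangement on $\mathfrak{c}$ together with a common maximal torus, whereas the paper uses the simultaneous eigen-decomposition of $\mathfrak{g}\otimes\overline{k((t))}$ under $f_{x,r}(\mathfrak{c})$ plus connectedness of semisimple centralizers. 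You also spell out the descent of openness to $X$, which the paper leaves implicit.
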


\begin{proof}
The locus $X$ is the image of the finite map of Proposition \ref{finiteness}, so it is closed. Also, the statement is clearly true if $G_0={\diamond}$, so we can assume $G_0$ is in $\TL(G)$ (and thus corresponds to an actual subgroup of $G$.)

The statements of the lemma for $x$ and $g\cdot x$ are equivalent for any $g\in G((t))$, so we can assume that $x\in\mathcal{A}(T)$. Recall that in Proposition \ref{vinbergsummary} and the preceding discussion, we produced a subspace $\mathfrak{c}\subseteq\mathfrak{k}_{x,r}/\mathfrak{k}_{x,r+}$ consisting of semisimple elements and a finite group $W_{\mathfrak{c}}$ acting on $\mathfrak{c}$ such that the map $\mathfrak{c}//W_{\mathfrak{c}}\rightarrow(\mathfrak{k}_{x,r}/\mathfrak{k}_{x,r+})//L_x$ is an isomorphism. Then by Proposition \ref{finiteness}, the composition $\mathfrak{c}\rightarrow\mathfrak{c}//W_{\mathfrak{c}}\rightarrow X$ is finite and surjective.

Lemma \ref{fqcompatibility} tells us that for any element $a$ of $\mathfrak{c}$, $q_{x,r}(a)$, treated as a element of $C((t))$, is equal to $q(f_{x,r}(a))$, so $Z_{q_{x,r}(a)}$ is in the conjugacy class of $Z_{G}(f_{x,r}(a)).$ As the elements of $f_{x,r}(\mathfrak{c})$ commute and are semisimple (by Lemma \ref{semisimpleagreement}), $\mathfrak{g}\otimes_{k((t))}\overline{k((t))}$ decomposes as a sum of $1$-dimensional $f_{x,r}(\mathfrak{c})\otimes_k \overline{k((t))}$-representations $\mathfrak{g}_i$. Each $\mathfrak{g}_i$ corresponds to an element $\beta_i$ in $(f_{x,r}(\mathfrak{c})\otimes_k \overline{k((t))})^*$, and for any $a\in\mathfrak{c}$, $Z_{q_{x,r}(a)}\otimes_{k((t))}\overline{k((t))}$ is the sum of those $\mathfrak{g}_i$ for which $\beta_i$ sends $f_{x,r}(a)$ to $0$.

It follows that the map $a\mapsto Z_{\mathfrak{g}}(f_{x,r}(a))$ is upper semicontinuous, i.e., for any element $g$ of $\mathfrak{g}$, the set of points $a$ of $\mathfrak{c}$ such that $Z_{\mathfrak{g}}(f_{x,r}(a))$ contains $g$ is a Zariski-closed subset of $\mathfrak{c}.$ Because $Z_G(f_{x,r}(a))$ is connected, the function $a\mapsto Z_G(f_{x,r}(a))$ is also upper semicontinuous, and in particular, takes only finitely many values.

Out of the finitely many possible values of $Z_G(f(a))$, let $G'_1,G'_2,\cdots, G'_m$ be the subgroups which contain a conjugate of $G_0$. Then $Z_G(f(a))$ contains a conjugate of $G_0$ if and only if it contains one of the $G'_i.$ It follows that the locus $\mathfrak{c}^{\geq G_0}$ of $a\in\mathfrak{c}$ such that $Z_G(f(a))$ contains a conjugate of $G_0$ is closed. Similarly, the set $\mathfrak{c}^{> G_0}$ of such $a$ such that $Z_G(f(a))$ contains a conjugate of $G_0$ as a proper subgroup is also closed, so the complement of $\mathfrak{c}^{> G_0}$ inside $\mathfrak{c}^{\geq G_0}$ is open inside $\mathfrak{c}^{\geq G_0}$. As the map $\mathfrak{c}\rightarrow X$ is finite, these statements imply the corresponding statements for $X$.
\end{proof}

\begin{remark}
For any fixed $r$, there is some $x$ such that the image of $q_{x,r}$ is the entirety of $C((t))_{=r}.$ This can be seen by choosing $x$ to lie in the building of the centralizer of a "generic" depth $r$ semisimple element. We omit the details, as this is not used in the following.
\end{remark}

Let $(\mathfrak{k}_{x,r}/\mathfrak{k}_{x,r+})^{\geq G_0}$ (resp. $(\mathfrak{k}_{x,r}/\mathfrak{k}_{x,r+})^{=G_0}$) denote the locus of points $p$ in $\mathfrak{k}_{x,r}/\mathfrak{k}_{x,r+}$ where $Z_{q_{x,r}(p)}\geq G_0$ (resp. $Z_{q_{x,r}(p)}=G_0$). By Lemma \ref{semicontinuity}, $(\mathfrak{k}_{x,r}/\mathfrak{k}_{x,r+})^{\geq G_0}$ is a closed sub-locus which contains $(\mathfrak{k}_{x,r}/\mathfrak{k}_{x,r+})^{=G_0}$ as an open subset. We equip them with the induced variety structures. By Corollary \ref{unstablevsfiber}, we have $(\mathfrak{k}_{x,r}/\mathfrak{k}_{x,r+})^{\geq\diamond}=(\mathfrak{k}_{x,r}/\mathfrak{k}_{x,r+})^{=\diamond}=(\mathfrak{k}_{x,r}/\mathfrak{k}_{x,r+})^{\on{us}}.$

We can now turn to the key statement of this paper. Fix $G_0\in\TL(G)$. As we would like to relate the buildings for $G_0$ and $G$, we lift $x$ to a point (which we will also denote by $x$) in the extended building $\widetilde{\mathcal{B}}(G).$ This does not affect the validity of any of the previous results in this section, as the $\mathfrak{k}_{x,r}$ do not change. Now we can (and do) identify $\widetilde{\mathcal{B}}(G'_0)$ with its image in $\widetilde{\mathcal{B}}(G)$ under some embedding, for any conjugate $G'_0$ of $G_0$.

Let $G^x$ be as in Lemma \ref{stabilizerconstruction}, and let $S$ be the set of $G^x$-conjugacy classes of subgroups $G'_0\subseteq G$ which are $G$-conjugate to $G_0$ and satisfy $x\in\widetilde{\mathcal{B}}(G'_0)$. Choose a representative $G_s$ for every $s\in S$, and let $N_s$ be the normalizer of $G_s$ in $G$. There is a natural action of $N_s$ on $\widetilde{\mathcal{B}}(G_s)$, and, as in Lemma \ref{stabilizerconstruction}, we can uniquely construct a reduced subgroup $N_s^x\subseteq N_s$ such that $N_s^x(k((t)))$ is the stabilizer of $x\in\widetilde{\mathcal{B}}(G_s)$ in $N_s(k((t)))$.

We want to relate the $L_x$-orbits in $\mathfrak{k}_{x,r}/\mathfrak{k}_{x,r+}$ to analogous orbits for $G_0$ (or rather, for its conjugates $G_s$). We will use a $G_s$-superscript to denote the $G_s$-counterpart of each of our structures for $G$. So for instance, we have a Moy-Prasad subspace $\mathfrak{k}_{x,r}^{G_s}\subseteq\mathfrak{g}_s((t))$, a locus $(\mathfrak{k}_{x,r}^{G_s}/\mathfrak{k}_{x,r+}^{G_s})^{=G_1}$ for any $G_1\in\TL(G_s)^{\diamond},$ a map $q_{x,r}^{G_s}:\mathfrak{k}_{x,r}^{G_s}/\mathfrak{k}_{x,r+}^{G_s}\rightarrow C((t))_{=r},$ etc.

For every $s\in S$, there is a closed embedding
\[
i_s:\mathfrak{k}_{x,r}^{G_s}/\mathfrak{k}_{x,r+}^{G_s}\rightarrow\mathfrak{k}_{x,r}/\mathfrak{k}_{x,r+}.
\]
We wish to understand how this map interacts with the stratification defined earlier in this section.

\begin{lemma}\label{levistabilizercompatibility}
Let $p_s$ be a nonzero $k$-point of $\mathfrak{k}_{x,r}^{G_s}/\mathfrak{k}_{x,r+}^{G_s}.$ Then $Z_{q_{x,r}(i_s(p_s))}$ contains a conjugate of $Z^{G_s}_{q_{x,r}^{G_s}(p_s)}.$
\end{lemma}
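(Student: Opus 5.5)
The plan is to reduce the statement to a comparison of centralizers of one semisimple element, viewed first in $G_s$ and then in $G$. We use the commutative square relating $q^{G_s}_{x,r}$ and $q_{x,r}$ via $i_s$ and the natural map $C^{G_s}\to C$, where $C^{G_s}=\mathfrak{g}_s//G_s$.

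First, we pick a good representative. Lift $x$ so that, after conjugating, it lies in an apartment $\widetilde{\mathcal{A}}(T)$ of $G_s$; this is also an apartment of $G$, since $G_s$ contains a maximal split torus of $G$ through $x$. Then Proposition \ref{vinbergsummary}, applied to $G_s$, gives a Cartan subspace $\mathfrak{c}^{G_s}$ of $\mathfrak{k}^{G_s}_{x,r}/\mathfrak{k}^{G_s}_{x,r+}$ such that $\mathfrak{c}^{G_s}\to(\mathfrak{k}^{G_s}_{x,r}/\mathfrak{k}^{G_s}_{x,r+})//L^{G_s}_x$ is surjective. Hence there is a semisimple $a\in\mathfrak{c}^{G_s}$ with $q^{G_s}_{x,r}(a)=q^{G_s}_{x,r}(p_s)$. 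Since $Z^{G_s}$ and $Z$ depend only on the images under $q^{G_s}_{x,r}$ and $q_{x,r}$, and both maps are invariant (with $i_s$ equivariant for $L^{G_s}_x\to L_x$), we may replace $p_s$ by $a$. We need $q_{x,r}(i_s(p_s))=q_{x,r}(i_s(a))$. This follows because $q_{x,r}\circ i_s$ factors through $q^{G_s}_{x,r}$: the map $\mathfrak{g}_s\to C^{G_s}\to C$ agrees with $\mathfrak{g}_s\to\mathfrak{g}\to C$. Equivalently, it follows because $p_s$ and $a$ have the same semisimple part up to $L^{G_s}_x$-conjugacy, by Proposition \ref{vinbergsummary}(5). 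If $q^{G_s}_{x,r}(a)=0$, then $Z^{G_s}=\diamond$, and we must check that the claim still makes sense. In that case $p_s$ is nilpotent, so $q_{x,r}(i_s(p_s))=0$ as well, and both sides are $\diamond$. We therefore assume $a\neq 0$ in $C$.

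Second, we apply Lemma \ref{fqcompatibility} to both $G_s$ and $G$. The element $f^{G_s}_{x,r}(a)\in\mathfrak{g}_s((t))$ satisfies $q^{G_s}(f^{G_s}_{x,r}(a))=q^{G_s}_{x,r}(a)$. The inclusions are compatible: $f_{x,r}\circ i_s=f^{G_s}_{x,r}$ as maps into $\mathfrak{g}((t))$, because both are given by the same affine root space decomposition, using $\mathfrak{k}^{G_s}_{x,r}=\mathfrak{k}_{x,r}\cap\mathfrak{g}_s((t))$. So $b:=f^{G_s}_{x,r}(a)$ is a single element. By Lemma \ref{semisimpleagreement} it is semisimple in both $\mathfrak{g}_s$ and $\mathfrak{g}$, and it maps to $q^{G_s}_{x,r}(p_s)$ in $C^{G_s}$ and to $q_{x,r}(i_s(p_s))$ in $C$. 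Therefore $Z^{G_s}_{q^{G_s}_{x,r}(p_s)}$ is the $G_s$-conjugacy class of $Z_{G_s}(b)$, and $Z_{q_{x,r}(i_s(p_s))}$ is the $G$-conjugacy class of $Z_G(b)$. Since $Z_{G_s}(b)=Z_G(b)\cap G_s\subseteq Z_G(b)$, the statement follows.

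The main obstacle is the compatibility bookkeeping: verifying that $f_{x,r}\circ i_s=f^{G_s}_{x,r}$ and that $q_{x,r}\circ i_s$ is compatible with $q^{G_s}_{x,r}$ through $C^{G_s}\to C$. The cleanest approach is to put $x$ in a common apartment $\widetilde{\mathcal{A}}(T)$ with $T\subseteq G_s$, using that $x\in\widetilde{\mathcal{B}}(G_s)$. The affine root decomposition of $\mathfrak{g}_s((t))$ is then a sub-decomposition of that of $\mathfrak{g}((t))$, which gives both identities directly. The other point to state explicitly is that the $Z$ invariants can be computed from any semisimple preimage; this is guaranteed by the lemma preceding Lemma \ref{semicontinuity}.
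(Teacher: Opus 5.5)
Your closing step matches the paper's: you exhibit one semisimple $b\in\mathfrak{g}_s$ whose image in $C^{G_s}$ is $q^{G_s}_{x,r}(p_s)$ and whose image in $C$ is $q_{x,r}(i_s(p_s))$, and then use $Z_{G_s}(b)=Z_G(b)\cap G_s$. The gap is in producing $b$. Everything rests on the identity $f_{x,r}\circ i_s=f^{G_s}_{x,r}$, and your justification for it fails.

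A twisted Levi subgroup need not contain a maximal split torus of $G$. For $G=GL_2$ and $G_s=\on{Res}_{k((t^{1/2}))/k((t))}\mathbb{G}_m$, the maximal split torus of $G_s$ is $Z_G$, and $\widetilde{\mathcal{B}}(G_s)$ is a single line inside $\widetilde{\mathcal{B}}(G)$, not an apartment. So there is no common apartment in general.

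Moreover, $f_{x,r}$ is not canonical. It depends on the presentation of Lemma \ref{twistedaffine} and on $x$ lying in the particular apartment $\mathcal{A}(T)$ with $T=(T_H^\sigma)^\circ$. The map $f^{G_s}_{x,r}$ depends on an unrelated presentation of $G_s$, and nothing makes the two compatible. Indeed, suppose they were compatible for $G_s$. Take the equally valid representative $uG_su^{-1}$ with $u\in K_{x,0+}$ and the transported presentation. Its splitting is $u f_{x,r}(\cdot)u^{-1}$, while $f_{x,r}\circ i$ is unchanged.

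Without the identity, you only know that $q(b)\in C((t))_{\geq r}$ agrees with $q_{x,r}(i_s(a))$ modulo $C((t))_{>r}$. To get equality you need exactly that $C^{G_s}((t))\to C((t))$ carries $C^{G_s}((t))_{=r}$ into $C((t))_{=r}$. Your first justification of $q_{x,r}(i_s(p_s))=q_{x,r}(i_s(a))$ silently uses the same fact. The two composites $\mathfrak{g}_s\to C^{G_s}\to C$ and $\mathfrak{g}_s\to\mathfrak{g}\to C$ agree on whole loop spaces, which by itself says nothing about degree-$r$ pieces. Your alternative via Proposition \ref{vinbergsummary}(5) does handle that particular step.

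This grading compatibility is the real content of the lemma, and the paper proves it directly. After a tame extension, $G_s\subseteq G$ becomes $L\times_{\Spec k}\Spec k((t))\subseteq H\times_{\Spec k}\Spec k((t))$. The claim then reduces to a short computation for $(\mathfrak{l}//L)((t))\to(\mathfrak{h}//H)((t))$. The commutative cube, using surjectivity of $\mathfrak{k}^{G_s}_{x,r}\to\mathfrak{k}^{G_s}_{x,r}/\mathfrak{k}^{G_s}_{x,r+}$, shows that $q^{G_s}_{x,r}(p_s)$ maps to $q_{x,r}(i_s(p_s))$. Any $k((t))$-rational semisimple $g_s\in\mathfrak{g}_s$ over $q^{G_s}_{x,r}(p_s)$ then finishes the proof.

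Your splitting idea can be rescued the same way. Over an extension $k((t^{1/m}))$ where $G_s$ is a split Levi of the split group $G$, a split maximal torus $T'\subseteq G_s$ with $x\in\widetilde{\mathcal{A}}(T')$ does exist, and compatible identifications can be chosen. There, Lemma \ref{fqcompatibility} for both groups gives the equality of the two points of $C$, and this descends because both are $k((t))$-points. The resulting $b$, however, is only defined over the extension. To compare $G(k((t)))$-conjugacy classes you must still replace it by a $k((t))$-rational semisimple preimage in $\mathfrak{g}_s$, which exists by the lemma preceding Lemma \ref{semicontinuity} applied to $G_s$. At that point the Cartan subspace and $f^{G_s}_{x,r}$ are no longer needed.
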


\begin{proof}
Consider the natural map $C^{G_s}\cong\mathfrak{g}_s//G_s\rightarrow\mathfrak{g}//G\cong C.$

We claim that the induced map $C^{G_s}((t))\rightarrow C((t))$ maps $C^{G_s}((t))_{\geq r}$ into $C((t))_{\geq r}$ and that the composition
\[
C^{G_s}((t))_{\geq r}\rightarrow C((t))_{\geq r}\rightarrow C((t))_{\geq r}/C((t))_{>r}\cong C((t))_{=r}
\]
factors through $C^{G_s}((t))_{\geq r}/C^{G_s}((t))_{>r}\cong C^{G_s}((t))_{=r}.$

As the bigrading with components $C_i((t))^j$ is naturally compatible with extensions of $k((t))$, it suffices to treat the case where $G_s$ and $G$ are split, and $G_s$ is a Levi subgroup inside $G$. In this case, there exists an identification of the inclusion $G_s\subseteq G$ with $L\times_{\Spec k}\Spec k((t))\subseteq H\times_{\Spec k}\Spec k((t))$ for some Levi subgroup $L$ of $H$. The map $C^{G_s}((t))\rightarrow C((t))$ can then be identified with $(\mathfrak{l}//L)((t))\rightarrow(\mathfrak{h}//H)((t)),$ and the desired statement follows from a short computation.

Now, using Theorem \ref{depthcenter}, we have a cube
\begin{center}
\begin{tikzcd}[row sep={40,between origins}, column sep={40,between origins}]
      & \mathfrak{k}_{x,r}^{G_s} \ar{rr}\ar{dd}\ar{dl} & & C^{G_s}((t))_{\geq r} \ar{dd}\ar{dl} \\
    \mathfrak{k}_{x,r}^{G_s}/\mathfrak{k}_{x,r+}^{G_s} \ar[crossing over]{rr} \ar{dd} & & C^{G_s}((t))_{=r} \\
      & \mathfrak{k}_{x,r}  \ar{rr} \ar{dl} & &  C((t))_{\geq r} \ar{dl} \\
    \mathfrak{k}_{x,r}/\mathfrak{k}_{x,r+} \ar{rr} && C((t))_{=r}. \ar[from=uu,crossing over]
\end{tikzcd}
\end{center}
We know the commutativity of every face of this cube, except for the front face. As $\mathfrak{k}_{x,r}^{G_s}\rightarrow\mathfrak{k}_{x,r}^{G_s}/\mathfrak{k}_{x,r+}^{G_s}$ is an epimorphism, it follows that the front face is commutative, and so the map $C^{G_s}((t))_{=r}\rightarrow C((t))_{=r}$ sends $q_{x,r}^{G_s}(p_s)$ to $q_{x,r}(i_s(p_s))$. Therefore, for any semisimple element $g_s\in\mathfrak{g}_s$ whose image in $C^{G_s}$ is equal to $q_{x,r}^{G_s}(p_s)$, the map $\mathfrak{g}_s\rightarrow\mathfrak{g}\rightarrow C$ sends $g_s$ to $q_{x,r}(i_s(p_s))$. It follows that $Z_{q_{x,r}(i_s(p_s))}$ is conjugate to $Z_G(g_s)$ while $Z^{G_s}_{q_{x,r}^{G_s}(p_s)}$ is conjugate to $Z_{G_s}(g_s)$, which implies the statement of the lemma.
\end{proof}

In particular, $i_s$ must map $(\mathfrak{k}_{x,r}^{G_s}/\mathfrak{k}_{x,r+}^{G_s})^{=G_s}$ into $(\mathfrak{k}_{x,r}/\mathfrak{k}_{x,r+})^{\geq G_0}.$ We would like to isolate the sublocus of $(\mathfrak{k}_{x,r}^{G_s}/\mathfrak{k}_{x,r+}^{G_s})^{=G_s}$ which maps into $(\mathfrak{k}_{x,r}/\mathfrak{k}_{x,r+})^{= G_0}.$ Let $Z_s$ be the center of $G_s$. Then the decomposition $\mathfrak{g}_s\cong[\mathfrak{g}_s,\mathfrak{g}_s]\oplus\mathfrak{z}_s$ induces a decomposition
\[
\mathfrak{k}_{x,r}^{G_s}/\mathfrak{k}_{x,r+}^{G_s}\cong\mathfrak{k}_{x,r}^{[G_s,G_s]}/\mathfrak{k}_{x,r+}^{[G_s,G_s]}\oplus\mathfrak{k}_{r}^{Z_s}/\mathfrak{k}_{r+}^{Z_s},
\]
where $\mathfrak{k}_{x,r}^{[G_s,G_s]}$ (resp. $\mathfrak{k}_{r}^{Z_s}$) are the Moy-Prasad subgroups associated to $[G_s,G_s]$ (resp. $Z_s$). (We note that the building of $Z_s$ is just a single point, hence the lack of dependence of $\mathfrak{k}_{r}^{Z_s}$ on $x$, while the building of $[G_s,G_s]$ coincides with that of $G$.)

The map $q_{x,r}^{G_s}$ also decomposes as a sum of the maps
\[
q_{x,r}^{[G_s,G_s]}:\mathfrak{k}_{x,r}^{[G_s,G_s]}/\mathfrak{k}_{x,r+}^{[G_s,G_s]}\rightarrow C^{[G_s,G_s]}((t))_{=r}
\]
and
\[
q_{x,r}^{Z_s}:\mathfrak{k}_r^{Z_s}/\mathfrak{k}_{r+}^{Z_s}\rightarrow C^{Z_s}((t))_{=r}.
\]
Furthermore, unwinding the definitions, we see that $C^{Z_s}((t))_{\leq r}$ can be identified with $\mathfrak{k}_r^{Z_s}$, and so $q_{x,r}^{Z_s}$ is an isomorphism.

A semisimple element of $\mathfrak{g}_s$ has centralizer $G_s$ if and only if it lies in $\mathfrak{z}_s$. It follows that $(\mathfrak{k}_{x,r}^{G_s}/\mathfrak{k}_{x,r+}^{G_s})^{=G_s}$ can be identified with 
\[
(q_{x,r}^{G_s})^{-1}(C^{Z_s}((t))_{=r})\cong (q_{x,r}^{[G_s,G_s]})^{-1}(0)\oplus(C^{Z_s}((t))_{=r}\backslash 0).
\]
Let $C^{Z_s}((t))_{=r,\on{gen}}$ be the locus of points $p$ in $C^{Z_s}((t))_{=r}$ whose image along 
\[
C^{Z_s}((t))_{=r}\rightarrow C^{Z_s}((t))\cong\mathfrak{z}_s((t))\rightarrow\mathfrak{g}((t))
\]
has stabilizer $G_s$ with respect to the adjoint $G$-action. (As $p$ comes from $Z_s$, the stabilizer must contain $G_s$.) We claim that $C^{Z_s}((t))_{=r,\on{gen}}$ is the complement of a finite number of hyperplanes in $C^{Z_s}((t))_{=r}$ and hence is naturally an open $k$-subvariety.

To see this, note that for any positive integer $i$, $C^{Z_s}((t))_{=r,\on{gen}}$ is the intersection of $C^{Z_s}((t))_{=r}$ with $C^{Z_s}((t^{\frac{1}{i}}))_{=r,\on{gen}}$ inside $C^{Z_s}((t^{\frac{1}{i}}))_{=r}$. It thus suffices to prove the desired statement after an extension of $k((t))$. We can then assume that $Z_s$ is a split torus. 

In this case, the adjoint action of $Z_s$ on $\mathfrak{g}$ gives a decomposition
\[
\mathfrak{g}=\displaystyle\bigoplus_{\gamma\in X^*(Z_s)}\mathfrak{g}_{\gamma}.
\]
It is easily checked that $p\in C^{Z_s}((t))_{=r}$ is in $C^{Z_s}((t))_{=r,\on{gen}}$ iff $p$ does not pair to zero with any nonzero $\gamma\in X^*(Z_s)$ such that $\mathfrak{g}_{\gamma}$ is nontrivial. For any such $\gamma$, the set of $p\in C^{Z_s}((t))_{=r}$ which pairs to zero with $\gamma$ is a hyperplane, as desired.

Now denote $(q_{x,r}^{[G_s,G_s]})^{-1}(0)\oplus C^{Z_s}((t))_{=r,\on{gen}}$ by $(\mathfrak{k}_{x,r}^{G_s}/\mathfrak{k}_{x,r+}^{G_s})^{=G_s,\on{gen}}.$ The proof of Lemma \ref{levistabilizercompatibility} shows that $i_s$ maps $(\mathfrak{k}_{x,r}^{G_s}/\mathfrak{k}_{x,r+}^{G_s})^{=G_s,\on{gen}}$ into $(\mathfrak{k}_{x,r}/\mathfrak{k}_{x,r+})^{=G_0}$. This allows us to finally state the main theorem of this paper.

\begin{theorem}\label{basecase}
The map
\[
\displaystyle\bigcup_{s\in S}(\mathfrak{k}_{x,r}^{G_s}/\mathfrak{k}_{x,r+}^{G_s})^{=G_s,\on{gen}}/(N^x_s/K_{x,0+}^{G_s})\rightarrow(\mathfrak{k}_{x,r}/\mathfrak{k}_{x,r+})^{=G_0}/(G^x/K_{x,0+})
\]
is an isomorphism of Artin stacks.
\end{theorem}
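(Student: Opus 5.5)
To prove Theorem \ref{basecase}, we treat the map as a morphism of quotient stacks and establish three facts. First, it is surjective on $k$-points. Second, on each piece it induces isomorphisms of automorphism groups, and pieces for distinct $s$ do not map to the same point. Third, it is smooth, or equivalently étale, with the correct tangent behavior. Since both sides are quotients of varieties by smooth affine groups of finite type, a representable, bijective-on-points, étale-type comparison gives the isomorphism. Concretely, for each $s$ we use the induced map
\[
G^x/K_{x,0+}\times^{N_s^x/K^{G_s}_{x,0+}}(\mathfrak{k}_{x,r}^{G_s}/\mathfrak{k}_{x,r+}^{G_s})^{=G_s,\on{gen}}\rightarrow(\mathfrak{k}_{x,r}/\mathfrak{k}_{x,r+})^{=G_0},
\]
and show that the disjoint union over $s\in S$ of these maps is an isomorphism of varieties. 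The theorem then follows by passing to quotients by $G^x/K_{x,0+}$.

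\textbf{Surjectivity.} Let $p\in(\mathfrak{k}_{x,r}/\mathfrak{k}_{x,r+})^{=G_0}$ and put $x\in\mathcal{A}(T)$. Using the Jordan decomposition in $\mathfrak{h}_r$ (Proposition \ref{vinbergsummary} and Lemma \ref{semisimpleagreement}), write $p=p_s+p_n$ with $p_s$ semisimple and $[p_s,p_n]=0$. The element $X=f_{x,r}(p_s)\in\mathfrak{g}((t))$ is semisimple, and by Lemma \ref{fqcompatibility} its centralizer $G'=Z_G(X)$ is a conjugate of $G_0$. Because $X$ is homogeneous of depth $r$ with respect to $x$, conjugation by the one-parameter group attached to $x$ rescales $X$. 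This shows that $G'$ is stable under the relevant torus, so $x\in\widetilde{\mathcal{B}}(G')$, for instance via the standard Adler–Spice / Kim–Murnaghan criterion that the building of a centralizer of a "good" element is the fixed-point set. Hence $G'$ is $G^x$-conjugate to some $G_s$. After conjugating, $p_s$ lies in the $\mathfrak{z}_s$-part and is generic, since its centralizer is exactly $G_s$. Moreover $p_n$ commutes with $p_s$, so it lies in $\mathfrak{k}^{G_s}_{x,r}/\mathfrak{k}^{G_s}_{x,r+}=(\mathfrak{k}_{x,r}/\mathfrak{k}_{x,r+})\cap\mathfrak{g}_s((t))$. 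Finally $p_n$ is nilpotent, so it lies in $(q^{[G_s,G_s]}_{x,r})^{-1}(0)$ by Corollary \ref{unstablevsfiber}.

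\textbf{Injectivity and stabilizers.} Suppose $g\in G^x$ sends $i_s(p)$ to $i_{s'}(p')$. Uniqueness of the Jordan decomposition forces $g$ to carry the semisimple part to the semisimple part, hence the centralizer $G_s$ to $G_{s'}$. Therefore $s=s'$ and $g\in N_s\cap G^x=N_s^x$, which also identifies the stabilizers. The same argument works over arbitrary test schemes, since the Jordan decomposition varies algebraically on these strata: the semisimple part is the $\mathfrak{z}_s$-coordinate.

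\textbf{Main obstacle: scheme-theoretic isomorphism.} The hardest step is upgrading the pointwise bijection to an isomorphism of stacks, given that the target stratum is defined with its reduced induced structure. The approach is a tangent-space computation. At a point $p$ with semisimple part $X$, we decompose $\mathfrak{h}_r=\mathfrak{h}_r^{X}\oplus[\mathfrak{h}_{0},X]$, using the grading together with $\on{ad}X$. This shows that the normal directions to $i_s(\ldots)$ are exactly swept out by the $L_x$-orbit directions transverse to $\mathfrak{g}_s$. Hence the map from the associated bundle is étale. Since the map is also bijective and the target is normal (we should check this, perhaps by exhibiting the stratum as a smooth fibration over an open subset of $\mathfrak{c}$), Zariski's main theorem gives the isomorphism. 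If normality is hard to establish directly, the fallback is to construct the inverse explicitly, by defining the Jordan-decomposition map on the stratum as a morphism.
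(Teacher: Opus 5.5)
Your overall architecture matches the paper's: reduce to the map of varieties $\Omega$, prove bijectivity on $k$-points, prove étaleness, then conclude. Your surjectivity step is essentially fine, and using the exact identity $[f_{x,r}(p_s),f_{x,r}(p_n)]=0$ to put $p_n$ into the $G_s$-part is a reasonable shortcut. One exception there: stability of $Z_G(X)$ under the $x$-twisted loop rotation does not by itself give $x\in\widetilde{\mathcal{B}}(Z_G(X))$, since that torus fixes all of $\widetilde{\mathcal{A}}(T)$ pointwise. The paper instead makes a tame base change, conjugates $\overline{g}$ by $L_x$ into the Cartan subspace $t^r\mathfrak{t}$, and observes that the centralizer then contains $T$.

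\textbf{Injectivity.} This is the genuine gap. From $g\cdot i_s(p)=i_{s'}(p')$ you only get $\overline{g}\cdot p_s=p'_{s'}$ in the quotient $\mathfrak{k}_{x,r}/\mathfrak{k}_{x,r+}$. But $G_s$ and $G_{s'}$ are centralizers of specific lifts in $\mathfrak{z}_s((t))$ and $\mathfrak{z}_{s'}((t))$, and $g$ carries the first lift to \emph{some} lift of $p'_{s'}$, not necessarily the one in $\mathfrak{z}_{s'}((t))$. Indeed, any $g\in K_{x,0+}$ acts trivially on the quotient but need not normalize $G_s$, so ``$g\in N_s^x$'' is false. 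The correct conclusion is $g\in K_{x,0+}N_s^x$. Reaching it requires that any two semisimple lifts of a nonzero semisimple $\overline{g}$ with $q$-value $q_{x,r}(\overline{g})$ are $K_{x,0+}$-conjugate. This is Lemma \ref{inverseconstruction}(2), the core of the bijectivity. One conjugates one lift into the centralizer of the other by successive approximation, using the splitting $\mathfrak{k}_{x,s}/\mathfrak{k}_{x,s+}=[\overline{g},\mathfrak{k}_{x,s-r}/\mathfrak{k}_{x,(s-r)+}]\oplus\mathfrak{k}^{Z_G(g)}_{x,s}/\mathfrak{k}^{Z_G(g)}_{x,s+}$. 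One then uses finiteness of $C^{Z_G(g)}\to C$ together with $\mathbb{G}_m$-equivariance to force the two lifts to be equal. Nothing in your proposal replaces this. Your remark that the argument works over test schemes is also circular. On the target stratum, algebraicity of $p\mapsto p_s$ is essentially the content of the theorem, which is why the paper uses Jordan decomposition only on $k$-points.

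\textbf{Scheme-theoretic upgrade.} This step is also incomplete. The source strata contain the reduced nilpotent-cone factor $(q^{[G_s,G_s]}_{x,r})^{-1}(0)$, so in general neither side is smooth. A tangent-space computation on the strata themselves therefore does not give étaleness. Normality of the target is unproved, and it is not needed anyway. The paper proceeds in three steps.
\begin{enumerate}
\item It computes the differential of the ambient map of smooth varieties $(G^x/K_{x,0+})\times^{(N^x_s/K^{G_s}_{x,0+})}(\mathfrak{k}^{G_s}_{x,r}/\mathfrak{k}^{G_s}_{x,r+})\to\mathfrak{k}_{x,r}/\mathfrak{k}_{x,r+}$ at the semisimple points $1\times\overline{g}_s$, and shows it is an isomorphism there. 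Your decomposition $\mathfrak{h}_r=\mathfrak{h}_r^X\oplus[\mathfrak{h}_0,X]$ is the right input. However, identifying $\mathfrak{h}_r^X$ with $\mathfrak{k}^{G_s}_{x,r}/\mathfrak{k}^{G_s}_{x,r+}$ again needs Lemma \ref{inverseconstruction} or the dimension count of Lemma \ref{finiteaffinecomparison}.
\item It spreads étaleness over the whole locus, using that the non-étale locus is closed and $G^x$-invariant and that semisimple parts lie in orbit closures.
\item It base changes to $(\mathfrak{k}_{x,r}/\mathfrak{k}_{x,r+})^{=G_0}$ and proves that $(\mathfrak{k}^{G_s}_{x,r}/\mathfrak{k}^{G_s}_{x,r+})^{=G_s,\on{gen}}$ is open and closed in the preimage. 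Openness comes from the criterion that $\on{ad}\overline{g}$ is invertible on $\mathfrak{h}/\mathfrak{h}_{x,s}$. Since the étale pullback is reduced, this locus is a scheme-theoretic connected component.
\end{enumerate}
Step (3), matching the reduced structures, is what your plan lacks. Once $\Omega$ is étale, bijectivity on $k$-points gives universal injectivity. Then $\Omega$ is an open immersion (Stacks, Tag 025G), hence an isomorphism, with no appeal to normality or Zariski's main theorem.
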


We will give the proof as a separate section.

\section{Proof of Theorem \ref{basecase}}\label{s:proof}

Note that the theorem can be rephrased as saying that the map of varieties
\[
\Omega:\displaystyle\bigcup_{s\in S}(G^x/K_{x,0+})\times^{(N^x_s/K_{x,0+}^{G_s})}(\mathfrak{k}_{x,r}^{G_s}/\mathfrak{k}_{x,r+}^{G_s})^{=G_s,\on{gen}}\rightarrow(\mathfrak{k}_{x,r}/\mathfrak{k}_{x,r+})^{=G_0}
\]
is an isomorphism. This is the statement we will now prove.

\subsection{Bijectivity on semisimple elements}

We start with the following lemma, which will let us define a pointwise inverse to $\Omega$ on semisimple elements.

\begin{lemma}\label{inverseconstruction}
Let $\overline{g}$ be any nonzero semisimple element of $\mathfrak{k}_{x,r}/\mathfrak{k}_{x,r+}$. Then:
\begin{enumerate}
\item There exists a semisimple element $g\in\mathfrak{k}_{x,r}$ lifting $\overline{g}$ such that $q(g)=q_{x,r}(\overline{g})$.
\item All choices of $g$ as in part (1) are $K_{x,0+}$-conjugate.
\item For any such $g$, we have $x\in\widetilde{B}(Z_G(g))$, and the equality
\begin{equation}\label{eq:centralizerdecomp}
\mathfrak{k}_{x,s}/\mathfrak{k}_{x,s+}\cong[\overline{g},\mathfrak{k}_{x,s-r}/\mathfrak{k}_{x,(s-r)+}]\oplus\mathfrak{k}^{Z_G(g)}_{x,s}/\mathfrak{k}^{Z_G(g)}_{x,s+}
\end{equation}
holds for any rational number $s$. Furthermore, $\mathfrak{k}^{Z_G(g)}_{x,s}/\mathfrak{k}^{Z_G(g)}_{x,s+}$ coincides with the kernel of $\on{ad}\overline{g}:\mathfrak{k}_{x,s}/\mathfrak{k}_{x,s+}\rightarrow\mathfrak{k}_{x,r+s}/\mathfrak{k}_{x,(r+s)+}$.
\end{enumerate}
\end{lemma}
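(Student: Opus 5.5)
We first arrange for $x$ to lie in the apartment $\mathcal{A}(T)$, so that $\mathfrak{k}_{x,r}/\mathfrak{k}_{x,r+}\cong\mathfrak{h}_r$ and the section $f_{x,r}$ is available. For part (1) the natural candidate is $g_0=f_{x,r}(\overline{g})$. By Lemma \ref{semisimpleagreement} it is semisimple, because $\overline{g}$ is. By Lemma \ref{fqcompatibility} we have $q(g_0)=q_{x,r}(\overline{g})$. It also lies in $\mathfrak{k}_{x,r}$ and lifts $\overline{g}$. This settles existence.

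We prove part (3) next, using $g_0$, and then transport the result to any other admissible $g$ via part (2). After passing to $k((t^{1/m}))$ with $t^r$ rational, as in the proof of Lemma \ref{semisimpleagreement}, the element $g_0$ equals $t^r\cdot z$ with $z\in\mathfrak{h}_r$ semisimple. Its centralizer is therefore spanned by the affine root spaces lying in the $0$-eigenspace of $\on{ad} z$. Because $\on{ad} z$ commutes with $T$ (indeed $z$ lies in a $\theta$-stable torus containing $T$, up to the choice of $\mathfrak{c}$), the centralizer $Z_G(g_0)$ contains the maximal split torus $T$. Hence $x\in\widetilde{\mathcal{A}}(T)\subseteq\widetilde{\mathcal{B}}(Z_G(g_0))$.

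If $T$ is not centralized, we first conjugate $\overline{g}$ by $L_x$ into a Cartan subspace $\mathfrak{c}$ that is normalized by a maximal torus of $L_x$. By Proposition \ref{vinbergsummary}, every semisimple element is $L_x$-conjugate into $\mathfrak{c}$. We may then choose $T$ inside $Z_{L_x}(\mathfrak{c})$. This is the point where care is needed.

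For the decomposition (\ref{eq:centralizerdecomp}), we use that $\on{ad} z$ is semisimple on $\mathfrak{h}$ and preserves the $\mathbb{R}/\mathbb{Z}$-grading up to the shift by $r$. The space $\mathfrak{h}$ therefore splits as $\ker\on{ad} z\oplus\on{im}\on{ad} z$, compatibly with the grading. Taking graded components gives $\mathfrak{h}_s=[z,\mathfrak{h}_{s-r}]\oplus(\ker\on{ad} z)_s$. The kernel part identifies with $\mathfrak{k}^{Z_G(g_0)}_{x,s}/\mathfrak{k}^{Z_G(g_0)}_{x,s+}$ through the formula $\mathfrak{k}^{G_0}_{x,s}=\mathfrak{k}_{x,s}\cap\mathfrak{g}_0((t))$. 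It is also exactly the kernel of $\on{ad}\overline{g}$ from degree $s$ to degree $s+r$.

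Part (2) is the main obstacle. Let $g$ be any semisimple lift of $\overline{g}$ with $q(g)=q(g_0)$. By the lemma on $k((t))$-points of $C$, the elements $g$ and $g_0$ are $G(k((t)))$-conjugate, and we must improve this to conjugacy under $K_{x,0+}$. We plan a successive-approximation argument. Write $g=g_0+\delta$ with $\delta\in\mathfrak{k}_{x,r+}$, and use (\ref{eq:centralizerdecomp}) in each degree $s>r$ to split the leading term of $\delta$ into an image part $[\overline{g},\eta]$ and a centralizer part. Here $\eta$ has degree $s-r>0$, so conjugating by $\exp(-\eta)\in K_{x,(s-r)}\subseteq K_{x,0+}$ kills the image part. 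Iterating this over the discrete set of relevant degrees, and using completeness of the pro-unipotent group $K_{x,0+}$, we conjugate $g$ to $g_0+\delta'$ with $\delta'\in\mathfrak{z}(g_0)((t))$, where $\mathfrak{z}(g_0)$ denotes the Lie algebra of $Z_G(g_0)$, with $\delta'$ of positive relative depth.

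It then remains to show $\delta'=0$. The element $g_0+\delta'$ is semisimple and has the same image in $C$ as $g_0$. Moreover, $g_0$ is central in $\mathfrak{z}(g_0)$, and the map $\mathfrak{z}(g_0)\to C$ near $g_0$ is étale on the center and has nilpotent fibers transversally. A semisimple element of $g_0+\mathfrak{z}(g_0)$ with the same invariants as $g_0$ is therefore $Z_G(g_0)$-conjugate to $g_0$, hence equal to it, since $g_0$ is central there. Making this étaleness precise may require the tameness and characteristic hypotheses, for example through the Weyl-group-order assumption ensuring separability of $\mathfrak{t}\to\mathfrak{t}/W$ at $g_0$.
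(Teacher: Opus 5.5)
Your overall architecture matches the paper. You get existence from $f_{x,r}(\overline g)$ via Lemmas \ref{semisimpleagreement} and \ref{fqcompatibility}. You prove part (3) for this special lift first. You then use successive approximation to conjugate an arbitrary lift by $K_{x,0+}$ into $\on{Lie}Z_G(g_0)$. Your kernel/image splitting of the homogeneous semisimple operator $\on{ad} z$ on the graded algebra $\mathfrak h$, together with the bracket compatibility of the maps $f_{x,\cdot}$, is a clean substitute for the paper's root-space check of (\ref{eq:centralizerdecomp}).

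\textbf{The gap: last step of part (2).} As written, you claim that a semisimple element of $\on{Lie}Z_G(g_0)$ with the same image in $C$ as $g_0$ is $Z_G(g_0)$-conjugate to $g_0$. This is false. For split $SL_2$, take $g_0=\mathrm{diag}(t^r,-t^r)$ and $g_1=-g_0$. Both lie in $\on{Lie}Z_G(g_0)=\mathfrak t\otimes k((t))$, are semisimple, and have the same invariants. They differ, and $T$ acts trivially on them. So the depth condition on $\delta'$ must be used. Moreover, \emph{near $g_0$} cannot mean $t$-adically near, since for $r<0$ the element $\delta'$ need not even be integral. Also, the map that is \'etale is not $\mathfrak t\to\mathfrak t/W$ at $g_0$, which ramifies there whenever $Z_G(g_0)$ is not a torus. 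It is $C^{Z}\to C$, with $Z=Z_G(g_0)$.

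The missing step is to pass to invariants of $Z$. Since $x\in\widetilde{\mathcal B}(Z)$ and $\mathfrak k^Z_{x,\cdot}=\mathfrak k_{x,\cdot}\cap\on{Lie}Z((t))$, Theorem \ref{depthcenter} applied to $Z$ shows the following. The elements $c_1=q^{Z}(g_0+\delta')$ and $c=q^Z(g_0)$ lie in $C^Z((t))_{\geq r}$ and agree modulo $C^Z((t))_{>r}$. Moreover $c\in C^Z((t))_{=r}$, and both map to $q_{x,r}(\overline g)\in C((t))_{=r}$.

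The paper then argues as follows. Pass to an extension where $Z\subseteq G$ is a split Levi and $r\in\mathbb Z$. There the cocharacter $(r,-1)$ of loop rotation times scaling has fixed loci exactly $C((t))_{=r}$ and $C^Z((t))_{=r}$. If $c_1\neq c$, then $c_1$ is not fixed. Its infinite orbit then lies in a single fiber of $C^Z((t))\to C((t))$, contradicting finiteness of $C^Z\to C$. Hence $c_1=c$. So $g_0+\delta'$ is geometrically $Z$-conjugate to the central element $g_0$, hence equal to it.

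Your \'etaleness idea can be rescued instead. Rescale by $t^{-r}$ over an extension so that $c$ and $c_1$ become $k[[t]]$-points with the same reduction, then use Hensel uniqueness for $C^Z\to C$. But this needs \'etaleness of $C^Z\to C$ at that reduction, i.e. that the stabilizer in $W$ of the rescaled leading term is the Weyl group of $Z$. That is a separate fact you would have to prove.

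\textbf{A smaller issue in part (3).} $Z_G(g_0)$ need not contain the maximal split torus $T$, and over $k((t))$ you cannot in general choose $T$ inside $Z_{L_x}(\mathfrak c)$. For split $SL_2$, let $x$ be the barycenter of an alcove and $r=\frac12$. Then $L_x=T$, $\mathfrak c$ is spanned by $e+ft$, and $Z_{L_x}(\mathfrak c)=\mu_2$; the centralizer of $e+ft$ is an anisotropic torus.

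The fix is what the paper does. Work over a tame extension in which $r$ becomes integral and $G$ splits; there $t^r\mathfrak t$ is a Cartan subspace. Conjugate by the $L_x$ of that extension, whose elements fix $x$. Conclude that $x$ lies in the building of the centralizer over the extension. Then descend, using that the extended building over $k((t))$ is the set of Galois-fixed points.
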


\begin{proof}
First we show part (1). We can assume without loss of generality that $x\in\widetilde{A}(T).$ Then by Lemma \ref{fqcompatibility}, $g=f_{x,r}(\overline{g})$ satisfies the desired properties.

Next we show part (3), assuming part (2). Again assume that $x\in\widetilde{A}(T).$ By part (2), it suffices to show that the conditions of part (3) are satisfied for the specific choice $g=f_{x,r}(\overline{g})$. We note that it suffices to prove them after extending of $k((t))$, as the original statements can be recovered by taking the fixed points under the Galois action.

Because $(\mathfrak{k}_{x,r}/\mathfrak{k}_{x,r+})$ has a nonzero semisimple element, $r$ must be a rational number, and after potentially replacing $k((t))$ by an extension we can assume that $r$ is an integer and $G$ is split (so we can identify it with $H\times_{\Spec k}\Spec k((t))$). In this case, $\mathfrak{k}_{x,r}/\mathfrak{k}_{x,r+}$ contains $t^r\cdot\mathfrak{t}[[t]]/t^{r+1}\cdot\mathfrak{t}[[t]]$ as a Cartan subspace. Acting on $\overline{g}$ by an element of $L_x\subseteq G^x/K_{x,0+}$, we can assume that $\overline{g}\in t^r\cdot\mathfrak{t}[[t]]/t^{r+1}\cdot\mathfrak{t}[[t]].$ It then follows that $Z_G(f_{x,r}(\overline{g}))$ contains $T\times_{\Spec k}\Spec k((t))$, and hence, $\widetilde{\mathcal{B}}(Z_G(f_{x,r}(\overline{g})))$ contains $\widetilde{\mathcal{A}}(T)$, which contains $x$, as desired.

Let us show that (\ref{eq:centralizerdecomp}) holds in this case. Each term is a sum of affine root spaces. Let $\mathfrak{g}_{\alpha}t^m$ be an affine root space appearing in the decomposition of $\mathfrak{k}_{x,s}/\mathfrak{k}_{x,s+}$. Then it is easy to check that $\mathfrak{g}_{\alpha}t^m$ appears in $[\overline{g},\mathfrak{k}_{x,s-r}/\mathfrak{k}_{x,(s-r)+}]$ iff the element in $\mathfrak{t}$ corresponding to $\overline{g}$ does not pair with $\alpha$ to zero. On the other hand $\mathfrak{g}_{\alpha}t^m$ appears in $\mathfrak{k}^{Z_G(g)}_{x,s}/\mathfrak{k}^{Z_G(g)}_{x,s+}$ iff the value of this pairing is zero, so we get the desired direct sum decomposition. The same computation identifies $\mathfrak{k}^{Z_G(g)}_{x,s}/\mathfrak{k}^{Z_G(g)}_{x,s+}$ with the kernel of $\on{ad}\overline{g}.$

Finally, we treat part (2). Let $g$ be a lift of $\overline{g}$ satisfying the conditions in both part (1) and (3) (such an element was proven to exist above). Let $g_1$ be another lift of $\overline{g}$ satisfying the conditions of part (1). We claim that there is a $K_{x,0+}$-conjugate of $g_1$ which commutes with $g$. To see this, we construct an element $g(s)\in K_{x,0+}/K_{x,(s-r)+}$ for every real number $s\geq r$ with $\mathfrak{k}_{x,s}\neq\mathfrak{k}_{x,s+}$. This series of elements will satisfy two properties:
\begin{enumerate}
\item The $g(s)$-conjugate of the image of $g_1$ in $\mathfrak{k}_{x,r}/\mathfrak{k}_{x,s+}$ lies in $\mathfrak{k}^{Z_G(g)}_{x,r}/\mathfrak{k}^{Z_G(g)}_{x,s+}$.
\item For $s_2>s_1$, $g(s_1)$ is the image of $g(s_2)$ in $K_{x,0+}/K_{x,(s_1-r)+}$.
 \end{enumerate}
 Then when we conjugate $g_1$ by the inverse limit of the $g(s)$, we will get an element of $Z_{\mathfrak{g}}(g),$ as desired.

We can set $g(r)=1$. To construct the rest of the $g(s)$, we use induction. Let $s>r$ be a real number such that $\mathfrak{k}_{x,s}\neq\mathfrak{k}_{x,s+}$, and let $s_1$ be the largest number less than $s$ with $\mathfrak{k}_{x,s_1}\neq\mathfrak{k}_{x,s_1+}$. Assume that a value of $g(s_1)$ satisfying the properties has already been constructed. Then choose a lift $g(s)_0\in K_{x,0+}/K_{x,(s-r)+}$ lifting $g(s_1)$. By the inductive hypothesis, we will have
\[
\on{ad} g(s)_0(\overline{g}_1)\in\mathfrak{k}^{Z_G(g)}_{x,r}/\mathfrak{k}^{Z_G(g)}_{x,s+}+\mathfrak{k}_{x,s}/\mathfrak{k}_{x,s+},
\]
where $\overline{g}_1$ is the image of $g_1$ in $\mathfrak{k}_{x,r}/\mathfrak{k}_{x,s+}$. By part (3), the right hand side can be rewritten as $\mathfrak{k}^{Z_G(g)}_{x,r}/\mathfrak{k}^{Z_G(g)}_{x,s+}\oplus[\overline{g},\mathfrak{k}_{x,s-r}/\mathfrak{k}_{x,(s-r)+}]$. Let $g(s)_1$ be an element of $K_{x,s-r}/K_{x,(s-r)+}\cong\mathfrak{k}_{x,s-r}/\mathfrak{k}_{x,(s-r)+}$ such that $[\overline{g},g(s)_1]$ coincides with the image of $\on{ad} g(s)_0(\overline{g}_1)$ in $[\overline{g},\mathfrak{k}_{x,s-r}/\mathfrak{k}_{x,(s-r)+}]$. Then a short computation shows that $g(s)=g(s)_1g(s)_0$ satisfies the desired properties.

Let us replace $g_1$ with its $K_{x,0+}$-conjugate which commutes with $g$. We wish to show that $g_1=g.$ First, we claim that the images $c_1$ and $c$ of $g_1$ and $g,$ respectively, in $C^{Z_G(g)}\cong Z_{\mathfrak{g}}(g)//Z_G(g)$ agree. By Theorem \ref{depthcenter}, $c_1$ and $c$ are elements of $C^{Z_G(g)}_{\geq r}$ which are equal modulo $C^{Z_G(g)}_{>r}.$ Furthermore, by assumption, the images of $c_1$ and $c$ in $C$ are equal to $q_{x,r}(\overline{g})$.

It suffices to show that $c_1=c$ after an extension of $k((t))$, so we can assume that $r$ is an integer, $G$ and $Z_G(g)$ are split, and $Z_G(g)$ is a Levi subgroup of $G$. Then the bigradings on $C((t))$ and $C^{Z_G(g)}((t))$ with components $C_i((t))^j$ and $C^{Z_G(g)}_i((t))^j$, respectively, come from $\mathbb{G}_m\times\mathbb{G}_m$-actions, and the map $C^{Z_G(g)}((t))\rightarrow C((t))$ is $\mathbb{G}_m\times\mathbb{G}_m$-equivariant. Consider the inclusion $a:\mathbb{G}_m\rightarrow\mathbb{G}_m\times\mathbb{G}_m$ corresponding to the cocharacter $(r,-1)\in\mathbb{Z}\oplus\mathbb{Z}\cong X_*(\mathbb{G}_m\times\mathbb{G}_m).$ The fixed points of this action on $C((t))$ (resp. $C^{Z_G(g)}((t))$) are $C((t))_{=r}$ (resp. $C^{Z_G(g)}((t))_{=r}$.)

In particular, for any $z\in\mathbb{G}_m$, the images of $a(z)\cdot c_1$ and $c$ in $C((t))$ agree. However, if $c_1\neq c$, then $c_1$ cannot lie in $C^{Z_G(g)}((t))_{=r}$, and so this gives us infinitely many points of $C^{Z_G(g)}((t))$ which map to the same point of $C((t))$. But $C^{Z_G(g)}\rightarrow C$ is finite, so we get a contradiction, and $c_1$ must equal $c$.

Therefore, $g_1$ and $g$ must become $Z_G(g)$-conjugate after some extension of $k((t))$. But $g$ is central in $Z_G(g)$, so $g_1=g$, as desired.

\end{proof}

Lemma \ref{inverseconstruction} allows us to associate to any semisimple $\overline{g}\in(\mathfrak{k}_{x,r}/\mathfrak{k}_{x,r+})^{=G_0}$ an element $s$ of $S$, by taking the centralizer of a lift $g$ satisfying the conditions of part (1). Assume that $Z_G(g)=uG_su^{-1},$ for $u\in G^x,$ and let $\overline{u}$ be the image of $u$ in $G^x/K_{x,0+}.$ Then $Z_G(u^{-1}g)=G_s$, so $u^{-1}g$ lies in $\mathfrak{k}^{G_s}_{x,r}$, and its image in $\mathfrak{k}^{G_s}_{x,r}/\mathfrak{k}^{G_s}_{x,r+}$ is equal to $\overline{u}^{-1}\overline{g}$. Then our pointwise inverse to $\Omega$ sends $\overline{g}$ to
\[
(\overline{u}, \overline{u}^{-1}\overline{g})\in(G^x/K_{x,0+})\times^{(N^x_s/K_{x,0+}^{G_s})}(\mathfrak{k}_{x,r}^{G_s}/\mathfrak{k}_{x,r+}^{G_s})^{=G_s,\on{gen}}.
\]
This is well defined because $u$ is well-defined up to right multiplication by an element of $N_s^x$, and it is easy to check that this is indeed a pointwise inverse on semisimple elements.

We conclude with a slight generalization of Lemma \ref{inverseconstruction} which will not be needed in the rest of this paper (but which is useful for some applications.)

\begin{lemma}\label{inverseconstruction2}
Let $\overline{v}_i\in\mathfrak{k}_{x,r_i}/\mathfrak{k}_{x,r_i+}, i=1,2,\cdots, m$ be nonzero semisimple elements of which pairwise commute. Then:
\begin{enumerate}
\item There exist semi-simple lifts $v_i\in\mathfrak{k}_{x,r_i}$ of $\overline{v}_i, i=1,2,\cdots, m$ which pairwise commute and satisfy $q(v_i)=q_{x,r_i}(\overline{v}_i)$ for all $i$.
\item All choices of $(v_1,v_2,\cdots, v_m)$ as in part (1) are $K_{x,0+}$-conjugate.
\item For any choice of the $v_i$, we have $x\in\widetilde{B}(Z_G(v_1,v_2,\cdots, v_m)).$
 \end{enumerate}
\end{lemma}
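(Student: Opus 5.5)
We argue by induction on $m$, running the proof of Lemma \ref{inverseconstruction} inside successive centralizers. The case $m=1$ is exactly Lemma \ref{inverseconstruction}.

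For the inductive step, apply the case $m-1$ to $\overline{v}_1,\dots,\overline{v}_{m-1}$. This gives commuting semisimple lifts $v_1,\dots,v_{m-1}$ with $q(v_i)=q_{x,r_i}(\overline{v}_i)$ and $x\in\widetilde{\mathcal{B}}(M)$, where
\[
M=Z_G(v_1,\dots,v_{m-1}).
\]
Since $M$ is an intersection of centralizers of commuting semisimple elements, it is a twisted Levi subgroup. We then check that $\overline{v}_m$ lies in $\mathfrak{k}^M_{x,r_m}/\mathfrak{k}^M_{x,r_m+}$. Iterating the last assertion of Lemma \ref{inverseconstruction}(3), this quotient is the common kernel of the maps $\on{ad}\overline{v}_i$ on $\mathfrak{k}_{x,r_m}/\mathfrak{k}_{x,r_m+}$. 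Concretely, $\mathfrak{k}^{Z_G(v_1)}_{x,s}/\mathfrak{k}^{Z_G(v_1)}_{x,s+}=\ker\on{ad}\overline{v}_1$, and we then apply the lemma again inside $Z_G(v_1)$. Since $\overline{v}_m$ commutes with every $\overline{v}_i$, it lies in this common kernel.

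Next we apply Lemma \ref{inverseconstruction}(1) to $\overline{v}_m$ viewed in $M$. This gives a semisimple lift $v_m\in\mathfrak{k}^M_{x,r_m}$ with $q^M(v_m)=q^M_{x,r_m}(\overline{v}_m)$. It commutes with $v_1,\dots,v_{m-1}$ because those elements are central in $M$. It also satisfies $q(v_m)=q_{x,r_m}(\overline{v}_m)$, by the compatibility of $C^M((t))_{=r}\to C((t))_{=r}$ with $q_{x,r}$ established in the proof of Lemma \ref{levistabilizercompatibility}. Part (3) then follows from Lemma \ref{inverseconstruction}(3) in $M$:
\[
x\in\widetilde{\mathcal{B}}(Z_M(v_m))=\widetilde{\mathcal{B}}(Z_G(v_1,\dots,v_m)).
\]

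For part (2), take another tuple $(v_1',\dots,v_m')$ satisfying (1). By induction we may conjugate by $K_{x,0+}$ so that $v_i'=v_i$ for all $i<m$. The element $v_m'$ commutes with the $v_i$, so it lies in $\mathfrak{k}^M_{x,r_m}=\mathfrak{k}_{x,r_m}\cap\mathfrak{m}((t))$. We then want to apply Lemma \ref{inverseconstruction}(2) inside $M$, which makes $v_m'$ conjugate to $v_m$ by $K^M_{x,0+}\subseteq K_{x,0+}$; this subgroup fixes $v_1,\dots,v_{m-1}$, so the earlier alignment is preserved.

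To use Lemma \ref{inverseconstruction}(2) in $M$ we need $q^M(v_m')=q^M_{x,r_m}(\overline{v}_m)$, and a priori we only know this after pushing forward to $C$. This is where we expect the main obstacle. We handle it with the argument from the end of the proof of Lemma \ref{inverseconstruction}, with $M$ in place of $Z_G(g)$. Theorem \ref{depthcenter}, applied to $M$, places $q^M(v_m')$ and $q^M(v_m)$ in $C^M((t))_{\geq r_m}$ and makes them congruent modulo $C^M((t))_{>r_m}$. Their images in $C((t))$ agree. The $\mathbb{G}_m$-action corresponding to the cocharacter $(r_m,-1)$, combined with finiteness of $C^M\to C$, then forces $q^M(v_m')$ to be fixed, so $q^M(v_m')=q^M(v_m)$. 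To set this up we first extend the base field so that $r_m$ is an integer, and $G$ and $M$ are split with $M$ a Levi subgroup. This is harmless because all statements descend by taking Galois-fixed points, exactly as in the proof of Lemma \ref{inverseconstruction}.
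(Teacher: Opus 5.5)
Your induction mirrors the paper's. The paper splits off $v_1$ and applies the inductive hypothesis to the group $Z_G(v_1)$ and the elements $\overline{v}_2,\dots,\overline{v}_m$. You apply it in $G$ to $\overline{v}_1,\dots,\overline{v}_{m-1}$ and then use Lemma~\ref{inverseconstruction} in $M$. Your part (2) is exactly the paper's key step: the $\mathbb{G}_m$/finiteness argument upgrading $q(v_m')=q_{x,r_m}(\overline{v}_m)$ to $q^M(v_m')=q^M_{x,r_m}(\overline{v}_m)$.

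There is a gap earlier, in showing $\overline{v}_m\in\mathfrak{k}^M_{x,r_m}/\mathfrak{k}^M_{x,r_m+}$. The statement you are inducting on says nothing about the common kernel of the $\on{ad}\overline{v}_i$, so you iterate Lemma~\ref{inverseconstruction}(3) along $G\supseteq Z_G(v_1)\supseteq Z_G(v_1,v_2)\supseteq\cdots$. However, applying Lemma~\ref{inverseconstruction}(3) inside $Z_G(v_1)$ to $v_2$ requires $v_2$ to satisfy part (1) \emph{for the group} $Z_G(v_1)$, i.e.\ $q^{Z_G(v_1)}(v_2)=q^{Z_G(v_1)}_{x,r_2}(\overline{v}_2)$. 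The same is needed for each $v_j$ relative to $Z_G(v_1,\dots,v_{j-1})$. Your inductive hypothesis only gives $q(v_j)=q_{x,r_j}(\overline{v}_j)$ in $C((t))$. Applying Lemma~\ref{inverseconstruction}(3) in $G$ to each $v_i$ separately does not close this either. That identifies each $\ker\on{ad}\overline{v}_i$ with the image of $\mathfrak{k}^{Z_G(v_i)}_{x,s}$, but taking images in $\mathfrak{k}_{x,s}/\mathfrak{k}_{x,s+}$ does not commute with intersections in general.

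The missing implication is exactly the one you prove in part (2), so the repair is easy. One option is to run your $\mathbb{G}_m$/finiteness argument at each stage of the iteration; it works for any twisted Levi $M'$ with $x\in\widetilde{\mathcal{B}}(M')$. The other is to strengthen the induction by the clause $\bigcap_{i<m}\ker\on{ad}\overline{v}_i=\mathfrak{k}^M_{x,s}/\mathfrak{k}^M_{x,s+}$ for all $s$. With that clause, your $v_m$ satisfies the $M$-level condition by construction. Lemma~\ref{inverseconstruction}(3) in $M$ then propagates the clause to the constructed tuple. Part (2) transfers it, and also part (3), to every tuple, since $K_{x,0+}$ fixes $x$ and acts trivially on each $\mathfrak{k}_{x,s}/\mathfrak{k}_{x,s+}$.

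The paper's ordering avoids the issue. It only applies Lemma~\ref{inverseconstruction}(3) to a single element that satisfies part (1) in the ambient group. It proves the comparison of the $q$-conditions for $G$ and $Z_G(v_1)$ once. It uses the easy direction (the cube) for existence and the $\mathbb{G}_m$ direction for uniqueness.
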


\begin{proof}
Induct on $m$. The statement is tautological for $m=0$. For $m>0$, Lemma \ref{inverseconstruction} shows that there exists a lift $v_1$ of $\overline{v}_1$ such that $q(v_1)=q_{x,r_1}(\overline{v}_1)$ and that furthermore, all such lifts are $K_{x,0+}$-conjugate. Part (3) of Lemma \ref{inverseconstruction} shows that $x\in\widetilde{B}(Z_G(v_1))$ and that the $\overline{v}_i$ can be identified with elements of $\mathfrak{k}_{x,r_i}^{Z_G(v_1)}/\mathfrak{k}_{x,r_i+}^{Z_G(v_1)}.$ The desired statements now follow from applying the inductive hypothesis with the group $G$ replaced by $Z_G(v_1)$, as long as we know that for a lift $v_i\in Z_G(v_1)$ of $\overline{v}_i$, $q(v_i)=q_{x,r_i}(\overline{v}_i)$ is equivalent to $q^{Z_G(v_1)}(v_i)=q_{x,r_i}^{Z_G(v_1)}(\overline{v}_i).$

As in the proof of Lemma \ref{levistabilizercompatibility}, we have a commutative cube
\begin{center}
\begin{tikzcd}[row sep={40,between origins}, column sep={50,between origins}]
      & \mathfrak{k}_{x,r_i}^{Z_G(v_1)} \ar{rr}\ar{dd}\ar{dl} & & C^{Z_G(v_1)}((t))_{\geq r_i} \ar{dd}\ar{dl} \\
    \mathfrak{k}_{x,r_i}^{Z_G(v_1)}/\mathfrak{k}_{x,r_i+}^{Z_G(v_1)} \ar[crossing over]{rr} \ar{dd} & & C^{Z_G(v_1)}((t))_{=r_i} \\
      & \mathfrak{k}_{x,r_i}  \ar{rr} \ar{dl} & &  C((t))_{\geq r_i} \ar{dl} \\
    \mathfrak{k}_{x,r_i}/\mathfrak{k}_{x,r_i+} \ar{rr} && C((t))_{=r_i}. \ar[from=uu,crossing over]
\end{tikzcd}
\end{center}
This cube immediately implies that if $q^{Z_G(v_1)}(v_i)=q_{x,r_i}^{Z_G(v_1)}(\overline{v}_i),$ then $q(v_i)=q_{x,r_i}(\overline{v}_i).$ Let us show the converse.

It suffices to show that $q^{Z_G(v_1)}(v_i)$ lies in $C^{Z_G(v_1)}((t))_{=r_i}.$ Since $q(v_i)=q_{x,r_i}(\overline{v}_i),$ we can deduce that the image of $q^{Z_G(v_1)}(v_i)$ in $C((t))_{\geq r_i}$ lies in $C((t))_{=r_i}.$ Thus, we can reduce to showing that the (set-theoretic) preimage of $C((t))_{=r_i}$ along $C^{Z_G(v_1)}((t))_{\geq r_i}\rightarrow C((t))_{\geq r_i}$ is equal to $C^{Z_G(v_1)}((t))_{=r_i}$.

Again as in the proof of Lemma \ref{levistabilizercompatibility}, we can pass to an extension of $k((t))$ and reduce to the case where the inclusion $Z_G(v_1)\subseteq G$ can be identified with an inclusion $L\times_{\Spec k}\Spec k((t))\subseteq H\times_{\Spec k}\Spec k((t))$ for some Levi subgroup $L$ of $H$. The map $C^{Z_G(v_1)}((t))\rightarrow C((t))$ is then identified with $(\mathfrak{l}//L)((t))\rightarrow (\mathfrak{h}//H)((t))$. The natural bigradings on $(\mathfrak{l}//L)((t))$ and $(\mathfrak{h}//H)((t))$ come from $\mathbb{G}_m\times\mathbb{G}_m$-actions, and the map $(\mathfrak{l}//L)((t))\rightarrow (\mathfrak{h}//H)((t))$ is $\mathbb{G}_m\times\mathbb{G}_m$-equivariant.

Since $\overline{v}_i$ is nonzero, $r$ must be rational by Lemma \ref{rationality}. Write $r=\frac{a}{b}$, where $a$ and $b$ are integers. Then $(\mathfrak{h}//H)((t))_{=r}$ (resp. $(\mathfrak{l}//L)((t))_{=r}$) is the fixed locus of the linear $\mathbb{G}_m$ action on $(\mathfrak{h}//H)((t))$ (resp. $(\mathfrak{l}//L)((t))$) where $z\in\mathbb{G}_m$ acts on $(\mathfrak{h}//H)_i((t))^j$ (resp. $(\mathfrak{l}//L)_i((t))^j$) by multiplication by $z^{b\cdot j-a\cdot i}$. Furthermore, as $\mathfrak{l}//L\rightarrow\mathfrak{h}//H$ is finite, the preimage in $(\mathfrak{l}//L)((t))$ of any $k$-point $p$ of $(\mathfrak{h}//H)((t))$ must be finite. If $p$ lies in $(\mathfrak{h}//H)((t))_{=r}$, then its preimage must also be $\mathbb{G}_m$-invariant and thus consist only of points in the $\mathbb{G}_m$-fixed locus $(\mathfrak{l}//L)((t))_{=r},$ as desired.

\end{proof}

\subsection{Bijectivity on all elements}
To extend the results of the previous subsection, we will need to understand the centralizers in $\mathfrak{k}_{x,r}/\mathfrak{k}_{x,r+}$ of semisimple elements of $\mathfrak{k}_{x,r}/\mathfrak{k}_{x,r+})^{=G_0}$. We start with the case where $x\in\widetilde{\mathcal{A}}(T)$.

\begin{lemma}\label{finiteaffinecomparison}
Assume that $x\in\widetilde{\mathcal{A}}(T)$, and equip $\mathfrak{h}$ with the grading constructed from $on \ref{ss:grading}.x$ in Subsecti Let $\overline{g}$ be a semisimple element of $(\mathfrak{k}_{x,r}/\mathfrak{k}_{x,r+})^{=G_0},$ and let $h\in\mathfrak{h}_r$ be the element corresponding to $\overline{g}$ under the isomorphism
\[
\mathfrak{h}_r\cong \mathfrak{k}_{x,r}/\mathfrak{k}_{x,r+}.
\]
Then $\on{dim}_k Z_H(h)=\on{dim}_{k((t))}G_0.$
\end{lemma}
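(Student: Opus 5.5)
The plan is to compute both sides as dimensions of Lie algebra centralizers and compare them through the map $f_{x,r}$, reusing the computation in the proof of Lemma \ref{semisimpleagreement}.

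First I identify $G_0$ concretely. Since $\overline{g}\in(\mathfrak{k}_{x,r}/\mathfrak{k}_{x,r+})^{=G_0}$ is nonzero and semisimple, Lemma \ref{fqcompatibility} and part (1) of Lemma \ref{inverseconstruction} show that $g:=f_{x,r}(\overline{g})$ is a semisimple lift with $q(g)=q_{x,r}(\overline{g})$. By the definition of $Z_p$, the group $Z_G(g)$ is then a representative of the class $G_0$. So $\dim_{k((t))}G_0=\dim_{k((t))}Z_G(g)$. Because $g$ is semisimple and our characteristic assumptions hold, $Z_G(g)$ is smooth with Lie algebra $Z_{\mathfrak{g}}(g)$. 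Hence this dimension equals $\dim_{k((t))}\ker(\on{ad} g)$ on $\mathfrak{g}((t))$. The same reasoning on the $k$-side gives $\dim_k Z_H(h)=\dim_k\ker(\on{ad} h|_{\mathfrak{h}})$, since $h$ is semisimple by Lemma \ref{semisimpleagreement}.

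Next I compare the two kernels. Dimensions of kernels are invariant under field extension, so, exactly as in the proof of Lemma \ref{semisimpleagreement}, I pass to $k((t^{\frac{1}{m}}))$ and relabel so that $r$ is an integer. I then use the identification (\ref{eq:gradedaffine}) of $\mathfrak{g}((t))$ (after this extension) with $\bigoplus_{i\in\mathbb{R}/\mathbb{Z}}\mathfrak{h}_i((t))$. Under it, $\on{ad} g$ preserves each summand and acts on $\mathfrak{h}_i((t))$ by $t^r\cdot(\on{ad} h|_{\mathfrak{h}_i}\otimes_k k((t)))$. Since $t^r$ is invertible, the kernel on each summand is $\ker(\on{ad} h|_{\mathfrak{h}_i})\otimes_k k((t))$. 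Summing over $i$ gives
\[
\dim_{k((t))}\ker(\on{ad} g)=\sum_i\dim_k\ker(\on{ad} h|_{\mathfrak{h}_i})=\dim_k Z_{\mathfrak{h}}(h).
\]
The last equality holds because $\on{ad} h$ preserves the grading, as $r$ is now an integer. Combining this with the first step yields the lemma.

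The main obstacle is bookkeeping rather than ideas. One point is checking that the decomposition (\ref{eq:gradedaffine}) is a genuine $k((t))$-linear identification compatible with $\on{ad} g$ after the base change. I will verify this directly from the affine root decomposition, using that $f_{x,r}$ places $\mathfrak{h}^i_\alpha$ in degree $t^i$. The other point is justifying that group centralizers of semisimple elements are smooth, so their dimension equals that of the Lie algebra centralizer. For this I will invoke the standing tameness and characteristic hypotheses, the same ones already used implicitly when treating $Z_G(g)$ as a (twisted) Levi subgroup.
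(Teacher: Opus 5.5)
Your argument is correct and is essentially the paper's. Both identify $G_0$ with $Z_G(f_{x,r}(\overline{g}))$, then compare centralizers by transporting $\on{ad} h$ to $\on{ad} f_{x,r}(\overline{g})$ through the $t$-twisted identification of $\mathfrak{h}$ with $\mathfrak{g}$ after base change. The paper packages this as a Lie algebra isomorphism $f:\mathfrak{h}\otimes_k\overline{k((t))}\to\mathfrak{g}\otimes_{k((t))}\overline{k((t))}$ restricting to $t^{-r}f_{x,r}$ on $\mathfrak{h}_r$, after arranging $x\in X_*(T)\otimes\mathbb{Q}$, and matches weights of a Cartan subspace containing $h$. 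You instead read the kernel off blockwise from the decomposition in Lemma \ref{semisimpleagreement}, which avoids both the Cartan subspace and the rationality assumption on $x$.
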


\begin{proof}
Let $\mathfrak{c}$ be a Cartan subspace of $\mathfrak{h}_r$ containing $h$. Then, as in the proof of Lemma \ref{semicontinuity}, write $\mathfrak{g}\otimes_{k((t))}\overline{k((t))}$ as a sum of $1$-dimensional $f_{x,r}(\mathfrak{c})\otimes_k\overline{k((t))}$-representations $\mathfrak{g}_i$, and assume that $\mathfrak{g}_i$ corresponds to the character $\beta_i\in(f_{x,r}(\mathfrak{c})\otimes_k\overline{k((t))})^*.$ Again, $Z_{\overline{g}}\otimes_{k((t))}\overline{k((t))}$ will be conjugate to the sum of the $\mathfrak{g}_i$ with $\beta_i(f_{x,r}(h))=0.$ Similarly, $\mathfrak{h}$ decomposes as a sum of $1$-dimensional $\mathfrak{c}$-representations, each of which corresponds to a character $\gamma_i\in\mathfrak{c}^*.$

As $\overline{g}$ is non-zero and semisimple, Proposition \ref{rationality} lets us assume that $r$ is rational. We may also assume that $x$ has rational coordinates, i.e., $x\in X_*(T)\otimes\mathbb{Q}$, which will assure that $\mathfrak{h}_s$ can be nonzero only for $s\in\mathbb{Q}$. We can then define an isomorphism of Lie algebras
\[
f:\mathfrak{h}\otimes_k\overline{k((t))}\rightarrow\mathfrak{g}\otimes_{k((t))}\overline{k((t))}
\]
which restricts to $t^{-r}\cdot f_{x,r}$ on $\mathfrak{h}_r$.

The map $f$ induces an identification between $\mathfrak{c}\otimes_k\overline{k((t))}$ and $f_{x,r}(\mathfrak{c})\otimes_k\overline{k((t))},$ under which each $\gamma_i$ must correspond to some $\beta_j$. Fix our indexing so that $\gamma_i$ corresponds to $\beta_i$. Then we see that $\gamma_i(h)=0$ iff $\beta_i(f(h))=\beta_i(t^{-r}f_{x,r}(\overline{g}))=t^{-r}\beta_i(f_{x,r}(\overline{g}))=0.$ It follows that $f(Z_{\mathfrak{h}}(h)\otimes_k\overline{k((t))})=Z_{\mathfrak{g}}(f_{x,r}(\overline{g}))\otimes_{k((t))}\overline{k((t))}.$ The desired equality of dimensions follows.
\end{proof}

\begin{corollary}\label{finiteaffinecomparison2}
Let $\overline{g}_s$ be a semisimple element of $(\mathfrak{k}_{x,r}^{G_s}/\mathfrak{k}_{x,r+}^{G_s})^{=G_s,\on{gen}}\subseteq\mathfrak{k}_{x,r}/\mathfrak{k}_{x,r+}$. Then the centralizer of $\overline{g}_s$ in $\mathfrak{k}_{x,r}/\mathfrak{k}_{x,r+}$ is equal to $\mathfrak{k}_{x,r}^{G_s}/\mathfrak{k}_{x,r+}^{G_s}$.
\end{corollary}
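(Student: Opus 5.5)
The plan is to deduce the corollary from part (3) of Lemma \ref{inverseconstruction}. That part says that for a nonzero semisimple $\overline{g}\in\mathfrak{k}_{x,r}/\mathfrak{k}_{x,r+}$ and any semisimple lift $g\in\mathfrak{k}_{x,r}$ with $q(g)=q_{x,r}(\overline{g})$, the kernel of $\on{ad}\overline{g}$ on $\mathfrak{k}_{x,r}/\mathfrak{k}_{x,r+}$ is $\mathfrak{k}^{Z_G(g)}_{x,r}/\mathfrak{k}^{Z_G(g)}_{x,r+}$. Here the centralizer of $\overline{g}_s$ is read as this kernel. So it suffices to produce such a lift $g$ of $\overline{g}_s$ (viewed in $\mathfrak{k}_{x,r}/\mathfrak{k}_{x,r+}$ via $i_s$) with $Z_G(g)=G_s$ exactly.

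\textbf{Step 1: $\overline{g}_s$ is central in $G_s$.} Write $\overline{g}_s=(a,b)$ according to the decomposition
\[
\mathfrak{k}_{x,r}^{G_s}/\mathfrak{k}_{x,r+}^{G_s}\cong\mathfrak{k}_{x,r}^{[G_s,G_s]}/\mathfrak{k}_{x,r+}^{[G_s,G_s]}\oplus\mathfrak{k}_{r}^{Z_s}/\mathfrak{k}_{r+}^{Z_s},
\]
with $a\in(q^{[G_s,G_s]}_{x,r})^{-1}(0)$ and $b\in C^{Z_s}((t))_{=r,\on{gen}}$. By Corollary \ref{unstablevsfiber} and Proposition \ref{vinbergsummary}(1), both applied to $[G_s,G_s]$, the component $a$ is unstable, hence nilpotent. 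Since $\overline{g}_s$ is semisimple and the two summands commute, $a$ is also the $[G_s,G_s]$-component of a semisimple element, so it is semisimple. Semisimplicity and nilpotence here can be checked through $f_{x,r}$ by Lemma \ref{semisimpleagreement}. Being both, $a=0$, and therefore $\overline{g}_s=b$.

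\textbf{Step 2: the lift and its centralizer.} Let $g$ be the image of $b$ under
\[
C^{Z_s}((t))_{=r}\xrightarrow{(q^{Z_s}_{x,r})^{-1}}\mathfrak{k}^{Z_s}_r\subseteq\mathfrak{z}_s((t))\subseteq\mathfrak{g}((t)).
\]
Then $g$ is semisimple, since it lies in the Lie algebra of a torus. It lies in $\mathfrak{k}^{G_s}_{x,r}\subseteq\mathfrak{k}_{x,r}$ and lifts $\overline{g}_s$. By the definition of the generic locus, its centralizer in $G$ is exactly $G_s$.

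\textbf{Step 3: checking $q(g)=q_{x,r}(i_s(\overline{g}_s))$.} This is the step needing the most care. I would use the cube from the proof of Lemma \ref{levistabilizercompatibility}. First, $q^{G_s}(g)=q^{Z_s}(g)$ lies in $C^{G_s}((t))_{=r}$, because $g$ comes from the degree-$r$ part of $C^{Z_s}((t))$ and $q^{Z_s}$ is the identity-like isomorphism in the torus case. Second, the map $C^{G_s}((t))\to C((t))$ preserves the bigrading with components $C_i((t))^j$. As in that proof, one checks this after passing to a split extension, where it is the $\mathbb{G}_m\times\mathbb{G}_m$-equivariance of $(\mathfrak{l}//L)((t))\to(\mathfrak{h}//H)((t))$. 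Hence $q(g)\in C((t))_{=r}$. Its class modulo $C((t))_{>r}$ is $q_{x,r}(i_s(\overline{g}_s))$ by the commutativity of the front face of the cube. An element of $C((t))_{=r}$ equals its own truncation, so $q(g)=q_{x,r}(i_s(\overline{g}_s))$.

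\textbf{Conclusion.} Lemma \ref{inverseconstruction}(3) now identifies the kernel of $\on{ad}\overline{g}_s$ with $\mathfrak{k}^{G_s}_{x,r}/\mathfrak{k}^{G_s}_{x,r+}$. As a sanity check, one could compare dimensions with Lemma \ref{finiteaffinecomparison}, but this is not needed.
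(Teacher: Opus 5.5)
Your proof is correct, but it takes a different route from the paper. The paper does not construct a lift. It observes that $\mathfrak{k}^{G_s}_{x,r'}/\mathfrak{k}^{G_s}_{x,r'+}$ lies in the kernel of $\on{ad}\overline{g}_s$ for every $r'$. Summing over $r'\in\mathbb{R}/\mathbb{Z}$ gives a subspace of dimension $\dim G_0$ inside the centralizer, in the finite-dimensional algebra $\mathfrak{h}$, of the element $h\in\mathfrak{h}_r$ corresponding to $\overline{g}_s$. The dimension count of Lemma \ref{finiteaffinecomparison} then forces equality in each graded piece.

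You instead produce the explicit central lift $g\in\mathfrak{z}_s((t))$ with $Z_G(g)=G_s$, verify the normalization $q(g)=q_{x,r}(\overline{g}_s)$, and let Lemma \ref{inverseconstruction}(3) identify the kernel exactly. This avoids Lemma \ref{finiteaffinecomparison} and its comparison of eigenspace decompositions over $\overline{k((t))}$. The price is that you rely on the full strength of Lemma \ref{inverseconstruction}: part (3) for an arbitrary lift as in part (1), which the paper gets from the $K_{x,0+}$-conjugacy of part (2). The paper's route is shorter, since it needs Lemma \ref{finiteaffinecomparison} anyway for the \'etaleness argument.

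Your Step 1 is a real plus. The paper's initial inclusion tacitly requires $\overline{g}_s$ to be central in the $G_s$-part, i.e.\ that its $[G_s,G_s]$-component vanishes, and you actually prove this.

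Two minor points:
\begin{enumerate}
\item In Step 3 the cube is unnecessary. Since $g\in\mathfrak{k}_{x,r}$ lifts $\overline{g}_s$, Theorem \ref{depthcenter}(2) already says $q_{x,r}(\overline{g}_s)$ is the degree-$r$ truncation of $q(g)$, so knowing $q(g)\in C((t))_{=r}$ suffices.
\item Lemma \ref{inverseconstruction} assumes $\overline{g}_s\neq 0$. This costs nothing: $0$ lies in the generic locus only when $G_s=G$, and there the statement is trivial.
\end{enumerate}
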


\begin{proof}
Translating by an element of $G(k((t))$, we can assume that $x\in\widetilde{A}(T).$ For any $r'\in\mathbb{R}$, the kernel of
\begin{equation}\label{eq:adgrading}
\on{ad}\overline{g}_s:\mathfrak{k}_{x,r'}/\mathfrak{k}_{x,r'+}\rightarrow\mathfrak{k}_{x,r+r'}/\mathfrak{k}_{x,(r+r')+}
\end{equation}
contains $\mathfrak{k}_{x,r'}^{G_s}/\mathfrak{k}_{x,r'+}^{G_s}$. Under the isomorphism
\[
\mathfrak{h}\cong\displaystyle\bigoplus_{r'\in\mathbb{R}/\mathbb{Z}}\mathfrak{k}_{x,r'}/\mathfrak{k}_{x,r'+}
\]
defined by Subsection \ref{ss:grading}, the centralizer of $\overline{g}_s$ contains
\[
\displaystyle\bigoplus_{r'\in\mathbb{R}/\mathbb{Z}}\mathfrak{k}_{x,r'}^{G_s}/\mathfrak{k}_{x,r'+}^{G_s},
\]
which is of dimension $\dim G_s=\dim G_0.$ Lemma \ref{finiteaffinecomparison} tells us that this containment is an equality, so the kernel of (\ref{eq:adgrading}) must equal $\mathfrak{k}_{x,r'}^{G_s}/\mathfrak{k}_{x,r'+}^{G_s}$. Setting $r'=r$, we get the desired statement.
\end{proof}

Let $(\mathfrak{k}_{x,r}/\mathfrak{k}_{x,r+})^{=G_0}(k)^{\on{ss}}$ (resp. $(\mathfrak{k}_{x,r}^{G_s}/\mathfrak{k}_{x,r+}^{G_s})^{=G_s,\on{gen}}(k)^{\on{ss}}$) denote the set of semisimple $k$-points of $(\mathfrak{k}_{x,r}/\mathfrak{k}_{x,r+})^{=G_0}$ (resp. ($\mathfrak{k}_{x,r}^{G_s}/\mathfrak{k}_{x,r+}^{G_s})^{=G_s,\on{gen}}$). We have a commutative square of sets of $k$-points
\begin{center}
\begin{tikzcd}
\displaystyle\bigcup_{s\in S}(G^x/K_{x,0+})(k)\times^{(N^x_s/K_{x,0+}^{G_s})(k)}(\mathfrak{k}_{x,r}^{G_s}/\mathfrak{k}_{x,r+}^{G_s})^{=G_s,\on{gen}}(k)\arrow[r, "\Omega(k)"] \arrow{d} & (\mathfrak{k}_{x,r}/\mathfrak{k}_{x,r+})^{=G_0}(k) \arrow{d}\\
\displaystyle\bigcup_{s\in S}(G^x/K_{x,0+})(k)\times^{(N^x_s/K_{x,0+}^{G_s})(k)}(\mathfrak{k}_{x,r}^{G_s}/\mathfrak{k}_{x,r+}^{G_s})^{=G_s,\on{gen}}(k)^{\on{ss}}\arrow[r] & (\mathfrak{k}_{x,r}/\mathfrak{k}_{x,r+})^{=G_0}(k)^{\on{ss}},
\end{tikzcd}
\end{center}
where the vertical arrows consist of taking semisimple parts\footnote{A priori, we do not know that this operation defines a map of varieties, hence why we currently work with the above square only at the level of $k$-points.}. In the previous subsection, we showed that the bottom arrow is an isomorphism. It will not be hard to upgrade this to all of $\Omega(k)$.

\begin{proposition}\label{bijectivity}
The map $\Omega$ is bijective on $k$-points.
\end{proposition}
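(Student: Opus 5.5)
We reduce to the semisimple case via the commutative square preceding the statement, whose bottom arrow was shown to be bijective in the previous subsection. The plan is to show that $\Omega(k)$ is compatible with fibers of the vertical ``semisimple part'' maps, and that on each fiber it is a bijection.

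\emph{Surjectivity.} Fix a $k$-point $\overline{v}$ of $(\mathfrak{k}_{x,r}/\mathfrak{k}_{x,r+})^{=G_0}$. Using Lemma \ref{semisimpleagreement} and Proposition \ref{vinbergsummary}, take its Jordan decomposition $\overline{v}=\overline{v}_s+\overline{v}_n$ inside $\mathfrak{h}_r\cong\mathfrak{k}_{x,r}/\mathfrak{k}_{x,r+}$. By part 5 of Proposition \ref{vinbergsummary}, $q_{x,r}(\overline{v})=q_{x,r}(\overline{v}_s)$, so $\overline{v}_s$ also lies in the $=G_0$ stratum, and it is nonzero. By the bijectivity on semisimple elements, after acting by some $\overline{u}\in G^x/K_{x,0+}$ we may assume $\overline{v}_s=i_s(\overline{g}_s)$ for some semisimple $\overline{g}_s\in(\mathfrak{k}^{G_s}_{x,r}/\mathfrak{k}^{G_s}_{x,r+})^{=G_s,\on{gen}}$. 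Since $\overline{v}_n$ commutes with $\overline{v}_s$, Corollary \ref{finiteaffinecomparison2} places $\overline{v}_n$ in $\mathfrak{k}^{G_s}_{x,r}/\mathfrak{k}^{G_s}_{x,r+}$. It remains to check that $\overline{g}_s+\overline{v}_n$ lies in the generic $=G_s$ stratum of the $G_s$-quotient. Using the decomposition into the $[G_s,G_s]$ part and the $Z_s$ part, $\overline{g}_s$ lies in the $Z_s$ summand (its $[G_s,G_s]$-component is unstable and semisimple, hence zero). Moreover $\overline{v}_n$ is nilpotent, so it lies in the $[G_s,G_s]$ summand with $q^{[G_s,G_s]}_{x,r}(\overline{v}_n)=0$ by Corollary \ref{unstablevsfiber} and part 1 of Proposition \ref{vinbergsummary} applied to $G_s$. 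Hence $\overline{g}_s+\overline{v}_n\in(q^{[G_s,G_s]}_{x,r})^{-1}(0)\oplus C^{Z_s}((t))_{=r,\on{gen}}$, and this gives a preimage.

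\emph{Injectivity.} Suppose $(\overline{u}_1,\overline{w}_1)$ with index $s_1$ and $(\overline{u}_2,\overline{w}_2)$ with index $s_2$ have the same image. Elements of the generic $=G_s$ locus decompose as (central semisimple) $+$ (nilpotent in $[G_s,G_s]$). These two parts commute, so they constitute the Jordan decomposition in $\mathfrak{k}^{G_s}_{x,r}/\mathfrak{k}^{G_s}_{x,r+}$, and hence, via Lemma \ref{semisimpleagreement}, the Jordan decomposition in $\mathfrak{k}_{x,r}/\mathfrak{k}_{x,r+}$ as well. Therefore $\Omega$ commutes with taking semisimple parts, and injectivity of the bottom arrow gives $s_1=s_2$. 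It also lets us normalize so that $\overline{u}_1=\overline{u}_2$ and the semisimple parts of $\overline{w}_1$ and $\overline{w}_2$ agree, the remaining ambiguity being the stabilizer. Equality of images then forces the nilpotent parts to agree after applying $\overline{u}_1$, so $\overline{w}_1=\overline{w}_2$.

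The main obstacle is the stabilizer bookkeeping in the last step. Normalizing the semisimple parts uses up the freedom only up to the stabilizer in $G^x/K_{x,0+}$ of $i_s(\overline{g}_s)$, and we must know that this stabilizer is contained in the image of $N^x_s/K^{G_s}_{x,0+}$, or at least that it acts compatibly on the two sides. The first thing to try is Lemma \ref{inverseconstruction}(2): an element $\overline{u}$ fixing $\overline{g}$ carries the canonical lift $g$ to another lift satisfying part (1), so after adjusting $\overline{u}$ by $K_{x,0+}$ it fixes $g$. It then centralizes $g$ and, in particular, normalizes $Z_G(g)=G_s$, while still stabilizing $x$; thus it lies in $N^x_s$. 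We will need to check carefully that the $K_{x,0+}$-adjustment is harmless modulo $K^{G_s}_{x,0+}$, and this is the part that requires the most care.
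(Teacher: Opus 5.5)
Your proposal is correct and follows essentially the paper's route: you reduce via the semisimple-part square to the fibers over $1\times\overline{g}_s$, then use Corollary \ref{finiteaffinecomparison2} to put any element with semisimple part $\overline{g}_s$ into $\mathfrak{k}^{G_s}_{x,r}/\mathfrak{k}^{G_s}_{x,r+}$, and your extra checks (that $\overline{g}_s+\overline{v}_n$ stays in the generic $=G_s$ stratum, and that $\Omega$ commutes with taking semisimple parts) fill in points the paper leaves implicit. The stabilizer issue you flag at the end is not a separate obstacle. It is exactly the injectivity of the bottom arrow, which you already assume; once that injectivity is used to normalize the representatives, injectivity of $i_s$ finishes the argument. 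Your sketch via Lemma \ref{inverseconstruction}(2) is also valid, since the $K_{x,0+}$-adjustment does not change the class in $G^x/K_{x,0+}$.
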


\begin{proof}
Because we know that the bottom arrow is an isomorphism, it suffices to show that $\Omega(k)$ induces bijections on fibers of the vertical maps. As all maps in the above square are $(G^x/K_{x,0+})(k)$-equivariant, it suffices to show that for any $s\in S$ and semisimple element $\overline{g}_s\subseteq(\mathfrak{k}_{x,r}^{G_s}/\mathfrak{k}_{x,r+}^{G_s})^{=G_s,\on{gen}},$ $\Omega$ induces a bijection on the fibers above $1\times\overline{g}_s.$ 

The fiber on the left hand side is the set of elements of $\mathfrak{k}_{x,r}^{G_s}/\mathfrak{k}_{x,r+}^{G_s}$ whose semisimple part is $\overline{g}_s$, while the fiber on the right hand side is the set of elements of $\mathfrak{k}_{x,r}/\mathfrak{k}_{x,r+}$ whose semisimple part is $\overline{g}_s,$ interpreted as an element of $\mathfrak{k}_{x,r}/\mathfrak{k}_{x,r+}.$ It thus suffices to show that any element $\overline{g}\in\mathfrak{k}_{x,r}/\mathfrak{k}_{x,r+}$ with semisimple part equal to $\overline{g}_s$ lies in $\mathfrak{k}_{x,r}^{G_s}/\mathfrak{k}_{x,r+}^{G_s}$. Jordan decomposition tells us that $\overline{g}$ commutes with $\overline{g}_s$. But Corollary \ref{finiteaffinecomparison2} then implies that $\overline{g}$ is contained in $\mathfrak{k}_{x,r}^{G_s}/\mathfrak{k}_{x,r+}^{G_s}$, as desired.
\end{proof}

\subsection{Conclusion of the proof}

The algebro-geometric input needed to upgrade Proposition \ref{bijectivity} to Theorem \ref{basecase} is the following.

\begin{proposition}\label{omegaetale}
The map $\Omega$ is \'etale.
\end{proposition}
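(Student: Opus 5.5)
Both source and target of $\Omega$ are smooth, so I will check étaleness pointwise on tangent spaces. Concretely, I will show that at every $k$-point the differential of $\Omega$ is an isomorphism. The source is smooth: it is a union of associated bundles $(G^x/K_{x,0+})\times^{(N^x_s/K^{G_s}_{x,0+})}V_s$, where $V_s=(\mathfrak{k}_{x,r}^{G_s}/\mathfrak{k}_{x,r+}^{G_s})^{=G_s,\on{gen}}=(q^{[G_s,G_s]}_{x,r})^{-1}(0)\oplus C^{Z_s}((t))_{=r,\on{gen}}$. Smoothness of the target is less obvious, since it is only a locally closed subvariety of the vector space $\mathfrak{k}_{x,r}/\mathfrak{k}_{x,r+}$. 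To avoid proving it directly, I will instead compare $\Omega$ with the ambient map
\[
\widetilde\Omega:\ (G^x/K_{x,0+})\times^{(N^x_s/K_{x,0+}^{G_s})}\bigl(\mathfrak{k}_{x,r}^{[G_s,G_s]}/\mathfrak{k}_{x,r+}^{[G_s,G_s]}\oplus C^{Z_s}((t))_{=r,\on{gen}}\bigr)\rightarrow\mathfrak{k}_{x,r}/\mathfrak{k}_{x,r+},
\]
which is a map between smooth varieties. The plan is to show that $\widetilde\Omega$ is étale at every point of the closed locus lying over $(q^{[G_s,G_s]}_{x,r})^{-1}(0)$. After that, I will identify $\Omega$ as a base change of $\widetilde\Omega$ over the locally closed stratum.

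By $G^x/K_{x,0+}$-equivariance it suffices to work at a point $(1,\overline{g})$ with $\overline{g}\in V_s$. Write $\overline{g}=\overline{g}_{\on{ss}}+\overline{g}_n$ with $\overline{g}_{\on{ss}}\in\mathfrak{z}_s$-part generic and $\overline{g}_n$ nilpotent in the derived part. The source and target have dimensions
\[
\dim L_x-\dim L^{G_s}_x+\dim(\mathfrak{k}^{G_s}_{x,r}/\mathfrak{k}^{G_s}_{x,r+})
\quad\text{and}\quad
\dim(\mathfrak{k}_{x,r}/\mathfrak{k}_{x,r+}),
\]
and these agree by Lemma \ref{inverseconstruction}(3) with $s=r$ and $s=0$: the complement of $\mathfrak{k}^{G_s}_{x,r}/\mathfrak{k}^{G_s}_{x,r+}$ is $[\overline{g}_{\on{ss}},\mathfrak{k}_{x,0}/\mathfrak{k}_{x,0+}]$, and $\on{ad}\overline{g}_{\on{ss}}$ is injective on a complement of $\on{Lie}L_x^{G_s}$. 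The differential at $(1,\overline{g})$ is
\[
(X,Y)\mapsto [X,\overline{g}]+Y,\qquad X\in(\mathfrak{k}_{x,0}/\mathfrak{k}_{x,0+})/(\mathfrak{k}^{G_s}_{x,0}/\mathfrak{k}^{G_s}_{x,0+}),\ Y\in\mathfrak{k}^{G_s}_{x,r}/\mathfrak{k}^{G_s}_{x,r+}.
\]
By dimension count, surjectivity suffices. Decompose $\mathfrak{k}_{x,0}/\mathfrak{k}_{x,0+}=\mathfrak{m}\oplus\mathfrak{k}^{G_s}_{x,0}/\mathfrak{k}^{G_s}_{x,0+}$ and $\mathfrak{k}_{x,r}/\mathfrak{k}_{x,r+}=\mathfrak{m}_r\oplus\mathfrak{k}^{G_s}_{x,r}/\mathfrak{k}^{G_s}_{x,r+}$ into $\on{ad}\overline{g}_{\on{ss}}$-eigenspace complements; after extending scalars, these are sums of affine root spaces for roots nonvanishing on $\overline{g}_{\on{ss}}$. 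On $\mathfrak{m}$, the operator $\on{ad}\overline{g}=\on{ad}\overline{g}_{\on{ss}}+\on{ad}\overline{g}_n$ maps $\mathfrak{m}\to\mathfrak{m}_r$ as a bijection plus a nilpotent perturbation. Since $\overline{g}_n$ commutes with $\overline{g}_{\on{ss}}$, $\on{ad}\overline{g}_n$ preserves $\mathfrak{m}$ and commutes with $\on{ad}\overline{g}_{\on{ss}}$, so it preserves the eigenspace decomposition. Working in the graded Lie algebra $\mathfrak{h}$ of Subsection \ref{ss:grading}, this says $\on{ad}h$ restricted to the nonzero generalized eigenspaces of $\on{ad}h_{\on{ss}}$ is invertible. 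That gives surjectivity onto $\mathfrak{m}_r$, and $Y$ covers the rest. This proves $\widetilde\Omega$ is étale along the relevant locus.

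It remains to pass from $\widetilde\Omega$ to $\Omega$. I will show that $\widetilde\Omega^{-1}\bigl((\mathfrak{k}_{x,r}/\mathfrak{k}_{x,r+})^{=G_0}\bigr)$ equals the source of $\Omega$ as schemes near these points, so that $\Omega$ is a base change of an étale map. Set-theoretically this follows from Lemma \ref{levistabilizercompatibility} and genericity: an element $Y_{\on{der}}+z$ maps into the $=G_0$ stratum iff its semisimple part in $[\mathfrak{g}_s,\mathfrak{g}_s]$ vanishes, because otherwise the centralizer becomes strictly smaller than $G_s$. Scheme-theoretically, both sides carry the induced reduced structure as defined in the paper, and étale pullback of a reduced scheme is reduced, so the set-theoretic equality suffices. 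I expect this step to be the main obstacle: one must check that $\widetilde\Omega$ being étale pulls back the reduced stratum $(\mathfrak{k}_{x,r}/\mathfrak{k}_{x,r+})^{=G_0}$ exactly to the reduced locus $(q^{[G_s,G_s]}_{x,r})^{-1}(0)_{\on{red}}\oplus C^{Z_s}((t))_{=r,\on{gen}}$, including the varying-$s$ components. The first thing I would try is to use the étale-local description together with the fact that $q$ is compatible, via the commutative cube in Lemma \ref{levistabilizercompatibility}, with $C^{G_s}\to C$, which is étale near generic central points.
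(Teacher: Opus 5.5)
Your first half is sound. The differential of $\widetilde\Omega$ at $(1,\overline g)$ is $(X,Y)\mapsto[X,\overline g]+Y$, and it is an isomorphism: $\on{ad}h$ is invertible on the nonzero eigenspaces of $\on{ad}h_{\on{ss}}$, and this holds graded piece by graded piece, which is worth spelling out. Combined with Lemma \ref{inverseconstruction}(3), this gives the claim. This is the paper's tangent-space computation, except that you do it at every point rather than at semisimple points followed by the closedness and $G^x$-invariance of the non-\'etale locus. That difference is harmless.

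The gap is in the passage from $\widetilde\Omega$ to $\Omega$. You claim that $Y_{\on{der}}+z$ lies over $(\mathfrak{k}_{x,r}/\mathfrak{k}_{x,r+})^{=G_0}$ only if the semisimple part of $Y_{\on{der}}$ vanishes. This is false. Take $G=GL_3$ split, $x$ the origin, $r=1$, and $G_0=G_s=GL_2\times GL_1$, so that $\mathfrak{k}_{x,1}/\mathfrak{k}_{x,1+}\cong t\,\mathfrak{gl}_3$. Let $z=t\,\on{diag}(a,a,b)$ with $a\neq b$, which is generic, and let $y=t\,\on{diag}(b-a,a-b,0)\in t\,\mathfrak{sl}_2$. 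Then $z+y=t\,\on{diag}(b,2a-b,b)$. Its centralizer is the block subgroup on the coordinates $\{1,3\}$ and $\{2\}$, a conjugate of $G_0$; here $2a-b\neq b$ because $\on{char}k\neq 2$ by the standing assumption. So $(1,z+y)$ lies in $\widetilde\Omega^{-1}((\mathfrak{k}_{x,r}/\mathfrak{k}_{x,r+})^{=G_0})$, although $y$ is nonzero semisimple.

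Your justification conflates two centralizers. $Z_{G_s}(z+y)$ is indeed a proper subgroup of $G_s$, but $Z_G(z+y)$ need not be contained in $G_s$ at all. So the preimage genuinely has other components. What you need, and have not proved, is that the source of $\Omega$ is open (and closed) in this preimage. Without openness, \'etaleness of $\widetilde\Omega$ along a closed subset says nothing about $\Omega$.

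This openness is the real content of the paper's argument. Take $\overline g$ in $(\mathfrak{k}^{G_s}_{x,r}/\mathfrak{k}^{G_s}_{x,r+})\times_{\mathfrak{k}_{x,r}/\mathfrak{k}_{x,r+}}(\mathfrak{k}_{x,r}/\mathfrak{k}_{x,r+})^{=G_0}$. The paper shows $\overline g$ lies in the gen locus if and only if $\on{ad}\overline g$ is invertible on $\mathfrak{h}/\mathfrak{h}_{x,s}$, which is an open condition. The nontrivial direction uses Lemma \ref{finiteaffinecomparison}, namely $\dim\ker\on{ad}\overline g_{\on{ss}}=\dim G_0=\dim\mathfrak{h}_{x,s}$, to force $\on{ad}\overline g$ to be nilpotent on $\mathfrak{h}_{x,s}$. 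Your $GL_3$ point violates exactly this condition. Closedness comes from intersecting with the $\geq G_s$ locus, and your reducedness remark then finishes the proof.

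Your closing idea could be developed into an equivalent openness argument. \'Etaleness of $C^{G_s}\to C$ near generic central points is an open condition, and it forces $Z_G\subseteq G_s$. As written, however, the proposal asserts a false global statement in place of this argument.
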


\begin{proof}
Fix an element $s\in S.$ We start by showing that the map of smooth varieties
\begin{equation}\label{eq:auxiliaryetale}
(G^x/K_{x,0+})\times^{(N^x_s/K_{x,0+}^{G_s})}(\mathfrak{k}_{x,r}^{G_s}/\mathfrak{k}_{x,r+}^{G_s})\rightarrow(\mathfrak{k}_{x,r}/\mathfrak{k}_{x,r+})
\end{equation}
is \'etale along $(G^x/K_{x,0+})\times^{(N^x_s/K_{x,0+}^{G_s})}(\mathfrak{k}_{x,r}^{G_s}/\mathfrak{k}_{x,r+}^{G_s})^{=G_s,\on{gen}}$. As the set of points where (\ref{eq:auxiliaryetale}) is not \'etale is a $G^x$-invariant closed subset, it suffices to show that for any semisimple element $\overline{g}_s\subseteq(\mathfrak{k}_{x,r}^{G_s}/\mathfrak{k}_{x,r+}^{G_s})^{=G_s,\on{gen}},$ (\ref{eq:auxiliaryetale}) is \'etale at $1\times\overline{g}_s.$ Equivalently, we need to show that $\Omega$ induces an isomorphism of tangent spaces at $1\times\overline{g}_s$. But the induced map on tangent spaces is isomorphic to the map
\[
[\mathfrak{p}_x/\mathfrak{k}_{x,0+},\overline{g}_s]\oplus\mathfrak{k}^{G_s}_{x,r}/\mathfrak{k}^{G_s}_{x,r+}\rightarrow\mathfrak{k}_{x,r}/\mathfrak{k}_{x,r+},
\]
which is an isomorphism by part (3) of Lemma \ref{inverseconstruction}.

The base change of (\ref{eq:auxiliaryetale}) along $(\mathfrak{k}_{x,r}/\mathfrak{k}_{x,r+})^{=G_0}\rightarrow (\mathfrak{k}_{x,r}/\mathfrak{k}_{x,r+})$ can be identified with
\[
(G^x/K_{x,0+})\times^{(N^x_s/K_{x,0+}^{G_s})}((\mathfrak{k}_{x,r}^{G_s}/\mathfrak{k}_{x,r+}^{G_s})\times_{\mathfrak{k}_{x,r}/\mathfrak{k}_{x,r+}}(\mathfrak{k}_{x,r}/\mathfrak{k}_{x,r+})^{=G_0})\rightarrow (\mathfrak{k}_{x,r}/\mathfrak{k}_{x,r+})^{=G_0}
\]
and must remain \'etale along $(G^x/K_{x,0+})\times^{(N^x_s/K_{x,0+}^{G_s})}(\mathfrak{k}_{x,r}^{G_s}/\mathfrak{k}_{x,r+}^{G_s})^{=G_s,\on{gen}}$. The proposition is now a consequence of the following lemma.

\begin{lemma}
The variety $(\mathfrak{k}_{x,r}^{G_s}/\mathfrak{k}_{x,r+}^{G_s})^{=G_s,\on{gen}}$ is a scheme-theoretic connected component of $(\mathfrak{k}_{x,r}^{G_s}/\mathfrak{k}_{x,r+}^{G_s})\times_{\mathfrak{k}_{x,r}/\mathfrak{k}_{x,r+}}(\mathfrak{k}_{x,r}/\mathfrak{k}_{x,r+})^{=G_0}$.
\end{lemma}

\begin{proof}
As \'etale morphisms preserve reducedness, $(\mathfrak{k}_{x,r}^{G_s}/\mathfrak{k}_{x,r+}^{G_s})\times_{\mathfrak{k}_{x,r}/\mathfrak{k}_{x,r+}}(\mathfrak{k}_{x,r}/\mathfrak{k}_{x,r+})^{=G_0}$ must be reduced along $(\mathfrak{k}_{x,r}^{G_s}/\mathfrak{k}_{x,r+}^{G_s})^{=G_s,\on{gen}}$. It thus suffices to show that $(\mathfrak{k}_{x,r}^{G_s}/\mathfrak{k}_{x,r+}^{G_s})^{=G_s,\on{gen}}$ is both open and closed in the Zariski topology on $(\mathfrak{k}_{x,r}^{G_s}/\mathfrak{k}_{x,r+}^{G_s})\times_{\mathfrak{k}_{x,r}/\mathfrak{k}_{x,r+}}(\mathfrak{k}_{x,r}/\mathfrak{k}_{x,r+})^{=G_0}$. Closedness follows from the fact that $(\mathfrak{k}_{x,r}^{G_s}/\mathfrak{k}_{x,r+}^{G_s})^{=G_s,\on{gen}}$ is the intersection of $(\mathfrak{k}_{x,r}^{G_s}/\mathfrak{k}_{x,r+}^{G_s})\times_{\mathfrak{k}_{x,r}/\mathfrak{k}_{x,r+}}(\mathfrak{k}_{x,r}/\mathfrak{k}_{x,r+})^{=G_0}$ and $(\mathfrak{k}_{x,r}^{G_s}/\mathfrak{k}_{x,r+}^{G_s})^{\geq G_s}$.

To show openness, we can assume that $x\in\mathcal{A}(T)$, allowing us to use the dictionary with graded Lie algebras explained in Subsection \ref{ss:grading}. Let $\mathfrak{h}_{x,s}\subseteq\mathfrak{h}$ be the Lie subalgebra given by the direct sum of subspaces $\mathfrak{h}_{x,s,r}\cong\mathfrak{k}_{x,r}^{G_s}/\mathfrak{k}_{x,r+}^{G_s}\subseteq\mathfrak{k}_{x,r}/\mathfrak{k}_{x,r+}\cong\mathfrak{h}_r.$ It is clear that $\dim_k\mathfrak{h}_{x,s}=\dim_{k((t))}G_0$. We will show that an element $\overline{g}\in(\mathfrak{k}_{x,r}^{G_s}/\mathfrak{k}_{x,r+}^{G_s})\times_{\mathfrak{k}_{x,r}/\mathfrak{k}_{x,r+}}(\mathfrak{k}_{x,r}/\mathfrak{k}_{x,r+})^{=G_0}\subseteq\mathfrak{h}_{x,s,r}$ lies in $(\mathfrak{k}_{x,r}^{G_s}/\mathfrak{k}_{x,r+}^{G_s})^{=G_s,\on{gen}}$ iff $\on{ad}\overline{g}$ is invertible on $\mathfrak{h}/\mathfrak{h}_{x,s}.$ This will immediately imply openness.

By Lemma \ref{finiteaffinecomparison}, the kernel of $\on{ad}\overline{g}_{\on{ss}}$ has dimension $\dim_{k((t))}G_0=\dim_k\mathfrak{h}_{x,s}.$ Therefore, $\on{ad}\overline{g}$ acts invertibly on $\mathfrak{h}/\mathfrak{h}_{x,s}$ iff $\on{ad}\overline{g}$ acts nilpotently on $\mathfrak{h}_{x,s}.$ But this is equivalent to $\overline{g}$ lying in $(\mathfrak{k}_{x,r}^{G_s}/\mathfrak{k}_{x,r+}^{G_s})^{\geq G_s}$, which, as previously discussed, is equivalent to $\overline{g}$ lying in $(\mathfrak{k}_{x,r}^{G_s}/\mathfrak{k}_{x,r+}^{G_s})^{=G_s,\on{gen}}$, as desired.
\end{proof}
\end{proof}

\begin{proof}[Proof of Theorem \ref{basecase}]
Because $k$ is algebraically closed, Proposition \ref{bijectivity} implies that $\Omega$ is universally injective. 	Combining with Proposition \ref{omegaetale} and Theorem 41.14.1 of \cite[\href{https://stacks.math.columbia.edu/tag/025G}{Theorem 025G}]{stacks-project}, we see that $\Omega$ must be an open immersion. But a bijective open immersion is an isomorphism, as desired.
\end{proof}
\printbibliography
\end{document}